\documentclass[12pt,letterpaper]{article}

\usepackage{amssymb,amsfonts,amscd,amsthm}
\usepackage[all,arc]{xy}
\usepackage{enumerate, bbm}
\usepackage{mathrsfs}
\usepackage{tikz}
\usepackage[left=0.9in,top=0.9in,right=0.9in,bottom=0.9in]{geometry}
\usepackage{mathtools}
\usepackage{hyperref}
\usepackage{cleveref}
\usepackage{color}
\usepackage{graphicx}
\usepackage{enumitem} 
\graphicspath{ {TexImages/} }

\newtheorem{thm}{Theorem}[section]
\newtheorem*{thm*}{Theorem}

\newtheorem{cor}[thm]{Corollary}
\newtheorem{prop}[thm]{Proposition}
\newtheorem{lem}[thm]{Lemma}
\newtheorem{claim}[thm]{Claim}

\newtheorem{quest}[thm]{Question}
\newtheorem{prob}[thm]{Problem}

\newtheorem{conj}[thm]{Conjecture}

\theoremstyle{definition}
\newtheorem{defn}{Definition}

\hypersetup{
	colorlinks,
	citecolor=blue,
	filecolor=blue,
	linkcolor=blue,
	urlcolor=blue,
	linktocpage
}

\setlist[enumerate]{itemsep=2ex, topsep=2ex} 
\setlist[itemize]{itemsep=2ex, topsep=2ex}

\newcommand{\R}{\mathbb{R}}

\newcommand{\E}{\mathbb{E}}
\newcommand{\al}{\alpha}

\newcommand{\gam}{\gamma}

\newcommand{\ep}{\varepsilon}

\newcommand{\del}{\delta}
\newcommand{\Del}{\Delta}

\newcommand{\half}{\frac{1}{2}}
\newcommand{\quart}{\frac{1}{4}}

\newcommand{\sm}{\setminus}
\newcommand{\sub}{\subseteq}


\newcommand{\tr}[1]{\textrm{#1}}

\newcommand{\mr}[1]{\mathrm{#1}}


\renewcommand{\SS}[1]{\textcolor{red}{#1}}


\newcommand{\dist}{\mr{dist}}

\newcommand{\ty}{\mr{ty}}

\parskip=6pt
\parindent=0pt

\title{Generalized Quasikernels in Digraphs}
\author{Sam Spiro\footnote{Dept.\ of Mathematics, Rutgers University {\tt sas703@scarletmail.rutgers.edu}. This material is based upon work supported by the National Science Foundation Mathematical Sciences Postdoctoral Research Fellowship under Grant No. DMS-2202730.}}
\date{\today}

\begin{document}
	
	\maketitle
	
	\begin{abstract}
		Given a digraph $D$, we say that a set of vertices $Q\subseteq V(D)$ is a $q$-kernel if $Q$ is an independent set and if every vertex of $D$ can be reached from $Q$ by a path of length at most $q$.  In this paper, we initiate the study of several extremal problems for $q$-kernels.  For example, we introduce and make progress on (what turns out to be) a weak version of the Small Quasikernel Conjecture, namely that every digraph contains a $q$-kernel with $|N^+[Q]|\ge \frac{1}{2}|V(D)|$ for all $q\ge 2$.
	\end{abstract}

	\section{Introduction}
	This paper concerns problems about (finite) digraphs, for which we recall a few basic definitions.  Given vertices $u,v$ of a digraph $D$, we define $\dist(u,v)$ to be the length of a shortest directed path from $u$ to $v$, and for a set of vertices $S$, we define $\dist(S,v)=\min_{u\in S} \dist(u,v)$.   We say a set of vertices $S$ is \textit{independent} if there exist no arcs between two vertices of $S$.
	
	The following (less basic) objects  will be the starting point for this paper.
	
	\begin{defn}
		A set of vertices $K$ of a digraph $D$ is said to be a \textit{kernel} if (1) $K$ is an independent set and (2) 
		$\dist(K,x)\le 1$ for all $x\in V(D)$  (that is, every vertex $x\in V(D)$ is either contained in $K$ or has a vertex of $K$ as an in-neighbor).  
	\end{defn}
	\begin{defn}
		A set of vertices $Q$ of a digraph $D$ is said to be a \textit{quasikernel} if (1) $Q$ is an independent set and (2) $\dist(Q,x)\le 2$ for all $x\in V(D)$ (that is, every vertex $x\in V(D)$ can be reached from $Q$ by a directed path of length at most 2).
	\end{defn}
	
	The concept of kernels was introduced by Von Neumann and Morgenstern \cite{von1947theory} in relation to problems from game theory.   Not every digraph contains a kernel (for example, a directed 3-cycle does not), but it turns out that every digraph has a quasikernel, as was originally shown by Chv\'atal and Lov\'asz \cite{chvatal1974every}.  
	
	Because every digraph contains a quasikernel, it is natural to ask extremal type problems about the set of quasikernels for a given digraph.  One conjecture in this direction is the Small Quasikernel Conjecture due to P.L.\ Erd\H{o}s and Sz\'ekely \cite{erdHos2010two}, where here we recall a digraph is \textit{source-free} if it contains no vertices with in-degree 0. 
	
	\begin{conj}[Small Quasikernel Conjecture]\label{conj:smallquasi}
		If $D$ is a source-free digraph, then $D$ contains a quasikernel $Q$ with $|Q|\le\half |V(D)|$.
	\end{conj}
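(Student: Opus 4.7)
The plan is to proceed by strong induction on $n=|V(D)|$; since Conjecture~\ref{conj:smallquasi} has been open for over a decade, any sketch is necessarily speculative, and I will flag the step at which the natural attack is expected to break down.

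First I would dispose of the subcase in which $D$ contains a vertex $v$ with in-degree exactly $1$, say with unique in-neighbor $u$.  The idea is to excise a small gadget and apply induction.  Tentatively delete $v$, and also delete every $w\in N^+(v)$ whose only in-neighbor was $v$ (these are precisely the vertices that would become sources in $D-v$); call the resulting digraph $D'$, which is source-free.  Apply induction to $D'$ to obtain a quasikernel $Q'$ with $|Q'|\le |V(D')|/2$.  If $Q'$ already reaches $v$ in $D$ within distance $2$, take $Q=Q'$.  Otherwise $u\notin Q'$ and no element of $Q'$ points to $u$; try $Q=Q'\cup\{u\}$, and if this fails independence perform a swap (replace $u$ by $v$, or exchange $u$ with the unique $q\in Q'$ it conflicts with, depending on the conflict type).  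Since we removed at least two vertices but the quasikernel grew by at most one, the size bound propagates to $D$.

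The main case is when every vertex has in-degree at least $2$, so deleting any single vertex preserves source-freeness.  Here the induction as stated loses a half-vertex per step, so I would try to delete two vertices at once.  A natural pair to target is the endpoints $\{u,v\}$ of a digon $u\leftrightarrow v$ when one exists, or more generally a pair $\{u,v\}$ with $u\to v$ minimizing $|N^+(u)\cup N^+(v)|$.  One would excise such a pair (together with any vertices it would orphan) and argue that the inductive quasikernel extends by at most one vertex.  As an alternative, one could attempt a global charging scheme that assigns to each $q$ in the eventual quasikernel a set of at least two vertices of $N^+[q]$, using source-freeness to show that disjointness can be arranged.

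The hard step, and I believe the reason the conjecture remains open, is proving that such a pair always exists and always admits a good extension.  In the high-girth, regular, in-degree-at-least-$2$ regime there is no obvious local structure to exploit, and the Chv\'atal--Lov\'asz greedy construction that underlies the existence proof offers no size control whatsoever.  I do not expect the plan above to close this gap without a genuinely new idea, and indeed it is precisely this obstacle that motivates the paper's turn toward the weaker coverage statement $|N^+[Q]|\ge \half|V(D)|$ advertised in the abstract.
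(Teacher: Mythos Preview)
The statement you are attempting to prove is \emph{Conjecture}~\ref{conj:smallquasi}, the Small Quasikernel Conjecture, which is an open problem and is \emph{not} proved anywhere in the paper.  The paper attributes it to P.~L.\ Erd\H{o}s and Sz\'ekely (1976), explicitly calls it ``difficult,'' and offers only partial progress toward it (e.g.\ Proposition~\ref{prop:smallQuasi}, which gives the far weaker bound $|Q|\le |V(D)|-\lfloor |V(D)|^{1/2}\rfloor$).  There is therefore no paper-proof against which to compare your proposal, and you correctly recognise in your final paragraph that the argument does not close.

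Even the ``easy'' subcase of your plan is not sound as written.  When $v$ has in-degree~$1$ but no vertex of $N^+(v)$ has $v$ as its sole in-neighbor, you delete exactly one vertex, not two; induction then yields only $|Q'|\le (n-1)/2$, and the repair $Q=Q'\cup\{u\}$ gives $|Q|\le (n+1)/2$, which fails for even $n$.  The swap step is also underspecified: if $u$ has an out-neighbor in $Q'$ and you replace $u$ by $v$, then $v$ may itself have an out-neighbor in $Q'$, so independence need not be restored; and ``exchange $u$ with the unique $q\in Q'$ it conflicts with'' presumes uniqueness that nothing in the argument guarantees.  These are not cosmetic issues to be fixed in a careful write-up; they are instances of precisely the obstacle that has kept the conjecture open for nearly fifty years.
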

	The source-free condition is necessary for the conclusion $|Q|\le\half |V(D)|$ to hold, as can be seen by considering, for example, $D$ to be a digraph with no arcs.  The bound $\half |V(D)|$ is best possible, as can be seen by considering (disjoint unions of) directed 2-cycles and 4-cycles.

	The Small Quasikernel Conjecture seems difficult, with very little progress being made between its original statement in 1976 and 2020.  However, since 2020, there have been a number of results solving the Small Quasikernel in a few special cases \cite{ai2023results,langlois2022algorithmic,langlois2023quasi,van2021kernels}, with this recent increase in popularity largely due to work of Kostochka, Luo, and Shan \cite{kostochka2020towards} who (in particular) showed that 4-chromatic digraphs satisfy the Small Quasikernel Conjecture.  
	
	We provide our own small step towards the Small Quasikernel Conjecture  by proving what appears to be the first general bound for the size of the smallest quasikernel in source-free digraphs.  Here and throughout our logarithms are written base 2.
	\begin{thm}\label{thm:smallQuasi}
		If $D$ is a source-free digraph, then $D$ contains a quasikernel with \[|Q|\le |V(D)|-\max\{\lfloor \sqrt{|V(D)|}\rfloor,\quart \sqrt{|V(D)|\log |V(D)|}\}.\]
	\end{thm}
	Note that the bound of $|V(D)|-\lfloor \sqrt{|V(D)|}\rfloor$ from \Cref{thm:smallQuasi} is almost always weaker than the other bound $|V(D)|-\quart \sqrt{|V(D)|\log |V(D)|}$.  We  include this generally weaker bound both because it has a simple proof and because it is tight when $D$ is a directed 2-cyles or 4-cycle.
	
	Much more can be said about the Small Quasikernel Conjecture, and we refer the interested reader to the nice survey by P.L.\ Erd\H{o}s, Gy\H{o}ri, Mezei, Salia, and Tyomkyn \cite{erdHos2023small} for a more  thorough overview of this problem.

	\section{Main Results}
	We introduce a natural generalization of kernels and quasikernels which does not appear to have been considered within the context\footnote{Our definition of $q$-kernels are a special case of the more general notion of a $(k,\ell)$-kernel due to Kw\'asnik and Borowiecki~\cite{kwasnik1980k} which has been studied a fair amount.  However, the results for $(k,\ell)$-kernels are mostly focused on existence questions rather than extremal problems, and our particular definition of $q$-kernels does not appear to have been given any special attention in this setting.} of the Small Quasikernel Conjecture. 
	
	\begin{defn}
		Given an integer $q\ge 1$ and a digraph $D$, we say that a set of vertices $Q\sub V(D)$ is a \textit{$q$-kernel} if $Q$ is independent and if $\dist(Q,v)\le q$ for every vertex $v\in V(D)$. 
	\end{defn}
	For example, 1-kernels are kernels and 2-kernels are quasikernels.

	In view of the Small Quasikernel Conjecture, it is natural to ask how small of a $q$-kernel  one can find in source-free digraphs.  Unlike with quasikernels, this problem turns out to be quite easy for $q\ge 3$. Indeed, in \Cref{sec:disjoint} we will show the following.
	\begin{prop}\label{prop:disjoint3}
		If $D$ is a source-free digraph and $q\ge 3$, then $D$ contains two disjoint $q$-kernels.  In particular, $D$ contains a $q$-kernel of size at most $\half |V(D)|$.
	\end{prop}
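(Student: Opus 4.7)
The plan is to invoke the Chv\'atal--Lov\'asz theorem (every digraph has a quasikernel) twice. First, let $Q_1$ be any quasikernel of $D$; by definition $Q_1$ is a $2$-kernel, hence also a $q$-kernel for every $q\ge 2$. Next, consider the induced subdigraph $D' := D[V(D) \sm Q_1]$ and let $Q_2$ be a quasikernel of $D'$, again guaranteed to exist by Chv\'atal--Lov\'asz. (Note $V(D')\ne\emptyset$ as long as $V(D)\ne\emptyset$, since source-freeness rules out $Q_1 = V(D)$ when $Q_1$ is independent.) By construction $Q_1 \cap Q_2 = \emptyset$, and $Q_2$ is independent in $D$ because $D'$ is an induced subdigraph.

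The key step is to verify that $Q_2$ is itself a $3$-kernel of $D$ (and hence a $q$-kernel for all $q \ge 3$). For vertices $v \in V(D) \sm Q_1$, the quasikernel property of $Q_2$ in $D'$ immediately yields $\dist_D(Q_2, v) \le \dist_{D'}(Q_2, v) \le 2$. The interesting case is $v \in Q_1$: here I would invoke source-freeness to pick an in-neighbor $u$ of $v$, and then use that $Q_1$ is independent to conclude $u \notin Q_1$, so $u \in V(D')$. Thus $\dist_D(Q_2, u) \le 2$, and combining this with the arc $u \to v$ gives $\dist_D(Q_2, v) \le 3$.

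Once both $Q_1$ and $Q_2$ are confirmed to be $q$-kernels for all $q \ge 3$, the ``in particular'' statement follows immediately from pigeonhole: since $Q_1 \cap Q_2 = \emptyset$, we have $|Q_1| + |Q_2| \le |V(D)|$, so at least one of these two $q$-kernels has size at most $\half |V(D)|$.

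The main ``obstacle'' is really just the short observation that source-freeness together with the independence of $Q_1$ forces every vertex of $Q_1$ to have an in-neighbor outside $Q_1$; once that is in hand, the argument reduces to a one-line distance estimate. This is also the structural reason the problem is easy for $q\ge 3$ but not for $q=2$: when $q=2$, the extra arc $u\to v$ costing one unit of distance would push $Q_2$ outside the quasikernel range, and no such buffer is available.
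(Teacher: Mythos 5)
Your argument is correct and is essentially identical to the paper's: the paper proves the same statement via \Cref{lem:2Disjoint}, which removes an arbitrary independent set $Q$ (in your case a quasikernel $Q_1$), takes a quasikernel of $D-Q$, and uses source-freeness plus independence of $Q$ to show every vertex of $Q$ has an in-neighbor outside $Q$, yielding the same distance-$3$ estimate. No differences worth noting.
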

	This result is best possible by considering  disjoint unions of directed 2-cycles.  This result does not hold for $q=2$, i.e.\ there exist source-free digraphs such that every two quasikernels intersect each other, as was originally proven by Gutin, Koh, Tay, and Yeo \cite{gutin2004number}.
	
	Although \Cref{prop:disjoint3} solves the most natural analog of \Cref{conj:smallquasi}, it also serves as the starting point for many new problems which we explore further below.  In particular, we study the more general problem of finding $r$ disjoint $q$-kernels in digraphs, as well as strengthening the bound of \Cref{prop:disjoint3} when extra structural information for $D$ is assumed.  In addition to this, we introduce (what turns out to be) a weak version of the Small Quasikernel Conjecture and make some progress towards this problem, especially in the setting of $q$-kernels.
	
	
	\subsection{Disjoint $q$-kernels}
	
	Here we study the problem of finding $r$ disjoint $q$-kernels in a digraph $D$.  One immediate obstacle is the following structure, where here and throughout
	we define the open in-neighborhood of a set $S$ by
	\[N^-_D(S):=\{u\in V(D)\sm S:\exists v\in S,\ uv\in E(D)\}.\]
	\begin{defn}
		Given a digraph $D$, we say that a set of vertices $S\sub V(D)$ is a \textit{source set} if $N^-(S)=\emptyset$.  We say that $S$ is an \textit{$s$-source set} if $S$ is non-empty and $|S|\le s$.
	\end{defn}
	For example, 1-source sets are just sources.
	
	Observe that if $D$ contains an $(r-1)$-source set $S$, then it does not contain $r$ disjoint $q$-kernels for any $q$.  Indeed, every $q$-kernel needs to use at least one vertex of $S$, and hence $D$ can contain at most $|S|<r$ disjoint $q$-kernels.  
	
	It is not difficult to show that $(r-1)$-source sets are the only obstruction for $D$ containing $r$ disjoint $q$-kernels for some $q\le |V(D)|$.  With more work,  one can significantly improve upon this trivial bound $q\le |V(D)|$.
	
	\begin{thm}\label{thm:disjointWeak}
		If a digraph has no $(r-1)$-source sets, then it contains $r$ disjoint $2^{r+1}$-kernels.
	\end{thm}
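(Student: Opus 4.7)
The plan is to proceed by induction on $r$. The base case $r=1$ is immediate: the hypothesis is vacuous, and the Chv\'atal--Lov\'asz theorem provides a quasikernel, which in particular is a $4$-kernel. For the inductive step, assume the conclusion for $r-1$ and let $D$ have no $(r-1)$-source set. The strategy I would pursue is to locate a single $2^{r+1}$-kernel $Q$ of $D$ such that the sub-digraph $D-Q$ has no $(r-2)$-source set, and then apply the inductive hypothesis to $D-Q$ to obtain $r-1$ disjoint $2^r$-kernels $Q_1,\dots,Q_{r-1}$ of $D-Q$.

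Granted such a $Q$, the rest is routine distance bookkeeping. Each $Q_i$ is independent in $D-Q$ and hence in $D$, and reaches every vertex of $V(D)\setminus Q$ within $2^r$ steps by paths contained in $D-Q\subseteq D$. For a vertex $v\in Q$, note that $v$ must have an in-neighbor in $D$ (since $r\ge 2$ forces $D$ to have no $1$-source set), and this in-neighbor cannot lie in the independent set $Q$, so it lies in $V(D)\setminus Q$. Therefore $Q_i$ reaches $v$ in at most $2^r+1\le 2^{r+1}$ steps, so each $Q_i$ is a $2^{r+1}$-kernel of $D$. Adjoining $Q$ yields the desired $r$ disjoint $2^{r+1}$-kernels, and the doubling from $2^r$ to $2^{r+1}$ provides exactly the slack needed.

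The main obstacle will be constructing $Q$. The first thing to try, taking $Q$ to be an arbitrary quasikernel of $D$, is insufficient: the directed $3$-cycle has no $2$-source set, yet removing any of its quasikernels (which are singletons) leaves a source, so the removed digraph acquires a $1$-source set. To get around this I would try an augmentation procedure: start with a quasikernel, and whenever a nonempty $S\subseteq V(D)\setminus Q$ of size at most $r-2$ has all of its $D$-in-neighbors in $Q$, modify $Q$ to destroy this obstruction while preserving independence and the $2^{r+1}$-kernel property. The hypothesis ``no $(r-1)$-source set'' is tailor-made for this: for any such $S$ and any $q\in Q$, the set $S\cup\{q\}$ has size at most $r-1$ and thus has an in-neighbor outside itself in $D$, providing the material needed to perform the modification.

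Showing that this augmentation terminates and produces a valid $Q$, with distances controlled so that $Q$ remains a $2^{r+1}$-kernel and $D-Q$ ends up with no $(r-2)$-source set, is the crux of the proof. I expect the distance bookkeeping to be the most delicate part, since vertices swapped into or out of $Q$ can be far from the original quasikernel, and it is precisely this expense that seems to force the exponential bound $2^{r+1}$ rather than something linear in $r$. If the direct augmentation proves unwieldy, an alternative is to induct instead on $|V(D)|$, removing a single vertex $v$ chosen so that $D-v$ retains the hypothesis ``no $(r-1)$-source set,'' or to work in the condensation of $D$ and exploit that every source SCC has at least $r$ vertices; but the key difficulty---reconciling the local structure near $Q$ with the global $(r-1)$-source condition---reappears in those approaches as well.
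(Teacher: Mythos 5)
Your overall induction-on-$r$ skeleton matches the paper's, but the pivotal step you defer --- ``locate a $2^{r+1}$-kernel $Q$ of $D$ such that $D-Q$ has no $(r-2)$-source set'' --- is not merely the delicate part of the argument; it is false in general, and no augmentation procedure can rescue it. Take $r=3$ and $D=C_5$, the directed $5$-cycle. Being strongly connected, $C_5$ has no $2$-source sets, so the theorem applies (and indeed three distinct singletons give three disjoint $4$-kernels). But for \emph{any} nonempty independent set $Q\subsetneq V(C_5)$, the digraph $C_5-Q$ is a disjoint union of directed paths and hence contains a source, i.e.\ a $1$-source set. A set $S$ with $C_5-S$ source-free must be closed under taking in-neighbors in the complement, which in a directed cycle forces $S=\emptyset$ or $S=V$. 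So the object your inductive step requires does not exist, and your proposed local modifications of $Q$ (swapping vertices to kill obstructions $S$ with $N^-_D(S)\subseteq Q$) just push the source around the cycle forever. The same failure occurs for every directed cycle $C_\ell$, $\ell\ge 3$.

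The paper's way around this is to change the ambient digraph rather than the set: given an independent $Q$, it forms the ``square'' digraph on $V(D)\setminus Q$ whose arcs are the arcs of $D$ together with all two-step paths through $Q$ (\Cref{prop:noStructures}), and separately preprocesses $D$ to destroy the \emph{pseudo-source sets} that obstruct this reduction (\Cref{lem:D'}). In the $C_5$ example with $Q=\{v_1\}$, the square on $\{v_2,\dots,v_5\}$ is a directed $4$-cycle, which is source-free, and the induction proceeds there. Each pass through this squaring roughly doubles the relevant distances, which is precisely the source of the $2^{r+1}$ bound. Your bookkeeping, by contrast, only adds $1$ per level of the induction ($2^r+1\le 2^{r+1}$), which would yield a bound linear in $r$; the paper explicitly conjectures that even improving $2^r$ to $(2-\ep)^r$ would require significantly new ideas (\Cref{conj:disjoint}), so the gentleness of your recursion is itself a sign that the reduction cannot be to $D-Q$ as stated.
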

	In other words, if a digraph contains $r$ disjoint $q$-kernels for some $q$, then it in fact contains $r$ disjoint $2^{r+1}$-kernels. We will ultimately prove a somewhat stronger result which guarantees disjoint sets $Q_1,\ldots,Q_r$ such that each $Q_i$ is a $q_i$-kernel with $q_i\approx i\cdot 2^{r-i}$; see \Cref{cor:disjointFull} for more.

	\subsection{Smaller $q$-kernels}
	The examples showing sharpness of \Cref{conj:smallquasi} and \Cref{prop:disjoint3} both involve disjoint unions of  directed 2-cycles or 4-cycles.  As such, it is reasonable to ask if these bounds can be improved if $D$ avoids certain cycle lengths.  A reasonable guess here would be the following.
	
	\begin{quest}\label{quest:smallCycles}
		Let $q\ge 2$ be an integer and let $D$ be a source-free digraph with $L(D)$ the set of $\ell$ such that $D$ contains a directed $\ell$-cycle. Is it true that $D$ contains a $q$-kernel $Q$ such that
		\[|Q|\le \max_{\ell\in L(D)} \frac{\lceil \ell/(q+1) \rceil}{\ell}\cdot |V(D)|?\]
	\end{quest}
	Observe that this upper bound is tight if $D$ is the disjoint union of directed $\ell$-cycles.  Also observe that $D$ being source-free implies $L(D)\ne \emptyset$, so this maximum is well defined.
	
	We suspect \Cref{quest:smallCycles} is too ambitious to be true in general.  We are, however, able to verify a few cases of \Cref{quest:smallCycles} when our digraph is bipartite, which is analogous to the work of \cite{kostochka2020towards} solving the Small Quasikernel Conjecture for digraphs of chromatic number at most 4.  To this end, we say a partition $U,V$ of $V(D)$ is a \textit{bipartition} of  $D$ if every arc has exactly one vertex in $U$ and exactly one vertex in $V$, and we say that $D$ is \textit{bipartite} if it has a bipartition.  
	
	Note that for bipartite source-free digraphs, we trivially have  quasikernels with $|Q|\le \frac{1}{2}|V(D)|$ by simply considering $Q$ to be the smaller part of the bipartition, and this is tight whenever $D$ is the disjoint union of directed 2-cycles and 4-cycles.  We show that this is the unique extremal construction for bipartite digraphs.
	\begin{prop}\label{prop:smallQuasiBipartite}
		If $D$ is a source-free bipartite digraph which is not the disjoint union of directed 2-cycles and 4-cycles, then $D$ contains a quasikernel $Q$ with $|Q|<\half |V(D)|$.
	\end{prop}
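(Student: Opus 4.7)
The plan is to exploit the bipartition $(U,V)$ of $D$. The starting observation is that both $U$ and $V$ are themselves quasikernels of $D$: each is independent because $D$ is bipartite, and since $D$ is source-free every $v\in V$ has an in-neighbor, which must lie in $U$, so $\dist(U,v)\le 1$, and symmetrically for $V$. Consequently, if $|U|\neq |V|$ we immediately win by taking the smaller side. So we may assume $|U|=|V|=\half n$ where $n=|V(D)|$.

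In this balanced case I would attempt to remove a single vertex from $U$. The key claim is that for $u\in U$, the set $U\setminus\{u\}$ is a quasikernel iff no $v\in V$ has $u$ as its unique in-neighbor: coverage of each $v\in V$ requires an in-neighbor in $U\setminus\{u\}$ since length-$2$ walks starting in $U$ of a bipartite digraph end in $U$, not $V$, while coverage of $u$ itself is automatic because $u$ has some in-neighbor $w\in V$ (source-free), which in turn has some in-neighbor $q\in U$ (source-free), with $q\neq u$ since there are no $2$-cycles. Either some such removable $u$ exists, giving $|Q|=\half n-1$, or each $u\in U$ has a witness $v_u\in V$ with $N^-(v_u)=\{u\}$; the map $u\mapsto v_u$ is then injective, hence bijective, so every $v\in V$ has in-degree exactly $1$. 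Applying the same reasoning to $V$ either finishes the proof or forces every $u\in U$ to have in-degree $1$ as well. Combining, every vertex of $D$ has in-degree $1$; then a total-degree count plus the special out-arcs $u\to v_u$ and $v\to u_v$ pushes every out-degree to exactly $1$. So $D$ is a disjoint union of directed cycles, each of even length (bipartite) and at least $6$ (no $2$- or $4$-cycles).

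For the final step, on each directed $2k$-cycle with $k\ge 3$ one can select vertices whose consecutive cyclic index gaps lie in $\{2,3\}$: the gap-$3$ condition ensures that each selected vertex distance-$2$-covers the two vertices after it, while the gap-$\ge 2$ condition ensures independence. The minimum such selection has size $\lceil 2k/3\rceil$, and a direct check gives $\lceil 2k/3\rceil\le k-1$ for every $k\ge 3$. Unioning these cycle-level quasikernels across components yields a quasikernel $Q$ of $D$ with $|Q|\le \sum(k_i-1)<\sum k_i=\half n$. The main obstacle in the whole argument is the middle step: squeezing a nonremovable pair $(U,V)$ of balanced quasikernels into the extremely rigid ``disjoint union of directed cycles of length $\ge 6$'' structure. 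The no-$4$-cycle assumption enters only at the very end, to exclude directed $4$-cycles in this decomposition; once the structure is pinned down the cycle-by-cycle construction is routine.
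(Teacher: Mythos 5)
Your proof is correct and follows essentially the same route as the paper: reduce to the balanced case $|U|=|V|$, then either delete a vertex $u$ from one side whose out-neighbors all have another in-neighbor, or conclude that $D$ is a disjoint union of directed cycles of length at least $6$ and handle those explicitly. The only cosmetic difference is that you re-derive the paper's structural lemma (that the non-removable case forces all in- and out-degrees to equal $1$) inline via the injection $u\mapsto v_u$ and the balancedness $|U|=|V|$, whereas the paper proves it as a standalone lemma without using the bipartition.
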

	
	We do not know how to improve the bound of \Cref{prop:smallQuasiBipartite} if we impose the stronger condition that $D$ contain no directed 2-cycles or 4-cycles, and in particular we are quite far from the optimal possible bound of $|Q|\le \frac{2}{5}|V(D)|$ predicted by \Cref{quest:smallCycles}  (due to $\ell=10$).   However, if one shifts from quasikernels to the setting of $q$-kernels with $q>2$, then one can prove sharp results when the smallest cycle length is small relative to $q$.  Moreover, we can do this in the more general setting of digraphs which have kernels (which in particular includes source-free bipartite digraphs  by considering either part of the partition).

	\begin{thm}\label{thm:bipartite}
		Let $D$ be a source-free  digraph which has a kernel $K$.  If $q,\ell\ge 3$ are integers such that $\ell\le (q+3)/2$ and such that every directed cycle of $D$ of even length has length at least $\ell$, then $D$ contains a $q$-kernel $Q$ of size at most $\frac{1}{\ell}|V(D)|$ with $Q\sub K$.
	\end{thm}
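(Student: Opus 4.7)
My strategy is to exploit the bipartite structure to restrict attention to $q$-kernels inside $U$, and then show that any minimum such kernel satisfies the bound via a Voronoi-type charging argument.

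First observe that $U$ itself is a $q$-kernel for every $q\ge 2$: because $D$ is source-free and bipartite, every $v\in V$ has an in-neighbor in $U$, and every $u\in U$ has $\dist(U,u)\le 2$ via any in-neighbor in $V$ followed by its in-neighbor in $U$. Since any $Q\sub U$ is automatically independent, there exists a minimum-size $q$-kernel $Q\sub U$, and it suffices to bound $|Q|$ by $|V(D)|/\ell$. The plan is to define, for each $v\in V(D)$, a vertex $\pi(v)\in Q$ minimizing $\dist(\pi(v),v)$ (breaking ties by a fixed order on $Q$), and to prove that each Voronoi cell $\pi^{-1}(u)$ has size at least $\ell$. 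Summing then gives $|V(D)|=\sum_{u\in Q}|\pi^{-1}(u)|\ge |Q|\cdot\ell$.

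To show $|\pi^{-1}(u)|\ge \ell$, I will use the minimality of $Q$ to pick, for each $u\in Q$, a ``private'' vertex $v_u$ with $\dist(u,v_u)\le q$ but $\dist(u',v_u)>q$ for every $u'\in Q\sm\{u\}$. I take $v_u$ to maximize $d_u:=\dist(u,v_u)$ and fix a shortest directed path $u=w_0\to w_1\to\cdots\to w_{d_u}=v_u$. Using $\dist(u',v_u)>q$ together with $\dist(w_i,v_u)=d_u-i$ (which holds because the path is shortest), the triangle inequality gives
\[\dist(u',w_i)\ge \dist(u',v_u)-\dist(w_i,v_u)>q-(d_u-i)\ge i=\dist(u,w_i)\]
for every $u'\in Q\sm\{u\}$, so each $w_i$ lies in $\pi^{-1}(u)$ and hence $|\pi^{-1}(u)|\ge d_u+1$. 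When $d_u\ge \ell-1$ this already gives $|\pi^{-1}(u)|\ge \ell$.

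The main work is the case $d_u\le \ell-2$. Here the plan is to use the maximality of $d_u$ together with the girth and the bound $\ell\le (q+3)/2$ to find additional vertices of $\pi^{-1}(u)$ by walking forward from $v_u$. Because $v_u$ maximizes $d_u$, every out-neighbor $x$ of $v_u$ is reached within distance $q$ by some $u'\in Q\sm\{u\}$; combined with $\dist(u',v_u)>q$ and the girth hypothesis this forces any walk from $x$ back to $v_u$ to have length at least $\ell-1$, since otherwise the arc $v_u\to x$ would close a cycle shorter than $\ell$. Iterating this along a directed cycle of length at least $\ell$ through $v_u$ and applying triangle-inequality computations analogous to the one above, the hypothesis $\ell\le (q+3)/2$ (equivalently $q\ge 2\ell-3$) is exactly what is needed to guarantee that the next $\ell-d_u-1$ cycle vertices also lie in $\pi^{-1}(u)$, completing the count.

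The hardest part will be this second case. One must verify that $v_u$ necessarily lies on (or close enough to) a directed cycle for the iteration to produce fresh Voronoi members, and the boundary instance $d_u=\ell-2$, $\dist(x,v_u)=\ell-1$ forces the triangle-inequality to yield only equality rather than strict inequality of distances, so the tie-breaking rule in the definition of $\pi$ must be chosen with care to route ties consistently toward the intended $u$.
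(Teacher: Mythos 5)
Your proposal hinges on the claim that every Voronoi cell $\pi^{-1}(u)$ of a minimum $q$-kernel $Q\sub U$ has size at least $\ell$, but this per-cell bound is false, already in the most basic extremal example. Take $q=\ell=3$ and $D=C_6$ with vertices $u_1,v_1,u_2,v_2,u_3,v_3$ in cyclic order and $u_i\in U$. The minimum $3$-kernel inside $U$ has size $2=|V(D)|/\ell$, e.g.\ $Q=\{u_1,u_3\}$; here $\dist(u_1,\cdot)$ takes the values $0,1,2,3$ on $u_1,v_1,u_2,v_2$ and $\dist(u_3,\cdot)$ takes the values $0,1$ on $u_3,v_3$ with $\dist(u_3,v_1)=3>1=\dist(u_1,v_1)$, so the cells are $\{u_1,v_1,u_2,v_2\}$ and $\{u_3,v_3\}$ -- the latter has size $2<\ell$, with no ties for the tie-breaking rule to fix (the other minimum kernel $\{u_1,u_2\}$ behaves the same way). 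The total $\sum_u|\pi^{-1}(u)|$ still equals $|Q|\cdot\ell$ here, but only because one cell overshoots while another undershoots; your charging scheme provides no mechanism for this redistribution, and your own "hard case" analysis cannot rescue it: for $u_3$ the unique private vertex is $v_3$ with $d_{u_3}=1=\ell-2$, and walking forward from $v_3$ immediately lands on $u_1\in Q$, so no further vertices can be claimed for $\pi^{-1}(u_3)$. A secondary issue is that your deduction "every out-neighbor $x$ of $v_u$ is reached within distance $q$ by some $u'\ne u$" does not follow from the maximality of $d_u$, since $x$ could be a private vertex of $u$ reached by a shorter path not through $v_u$; but the counterexample above shows the approach fails for structural reasons before these details matter.

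For contrast, the paper does not attempt a local charging argument. It first reduces to the case $q=2\ell-3$, then passes to a spanning subdigraph in which every vertex has in-degree exactly $1$, decomposes that subdigraph into \emph{unicyclic} components (each a single directed cycle with in-trees hanging off it), and proves the bound on each component by a minimal-counterexample induction: vertices are grouped into "types" according to where they attach to the cycle, the sizes $\ty(j)$ of these groups are bounded via global counting inequalities, and the kernel is taken to be every $m$-th cycle vertex $u_{1+mt}$ in $U$. The global counting there is precisely what absorbs the unevenness that defeats the cell-by-cell bound.
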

	For example, taking $q=\ell=3$ implies that every source-free digraph with a kernel and no directed 2-cycles contains a 3-kernel of size at most $\frac{1}{3}|V(D)|$, which is best possible by considering directed 6-cycles.  More generally, the bound for $Q$ in \Cref{thm:bipartite} is best possible and matches what \Cref{quest:smallCycles} predicts if $D$ is the disjoint union of directed $\ell$-cycles when $\ell$ is even (the even condition guaranteeing $D$ has a kernel), or if $D$ is the disjoint union of directed $2\ell$-cycles when $2\ell\ge q+2$.
	
	We suspect that one can extend our approach for \Cref{thm:bipartite} to verify \Cref{quest:smallCycles} for bipartite digraphs with other cycle restrictions.  However, it is not possible to solve \Cref{quest:smallCycles} in full for bipartite digraphs if we continue imposing the additional condition $Q\sub K$ in \Cref{thm:bipartite} (which will turn out to be vital to our proof); see the arXiv only appendix for some concrete examples of this.


	\subsection{Large $q$-kernels}
	This subsection studies the problem of finding ``large'' $q$-kernels in digraphs. The most straightforward problem of trying to maximize $|Q|$ amongst $q$-kernels $Q$ is not interesting, as $|Q|$ could be as small as 1 if $D$ is a tournament.  However, this problem becomes interesting if we change how we measure  $Q$.  To this end, we denote the closed out-neighborhood of a set $S\sub V(D)$ by
	\[N^+[S]=S\sqcup N^+(S)=S\cup  \{v:\exists u\in S,\ uv\in E(D)\}.\]
	\begin{conj}[Large Quasikernel Conjecture]\label{conj:largeQuasi}
		Every digraph $D$ contains a quasikernel $Q$ with $|N^+[Q]|\ge \frac{1}{2}|V(D)|$.
	\end{conj}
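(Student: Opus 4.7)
The plan is to attack the conjecture via an extremal argument. Take $Q$ to be a quasikernel of $D$ maximizing $|N^+[Q]|$, and suppose for contradiction that $|N^+[Q]| < \frac{1}{2}|V(D)|$. Partition $V(D) = Q \sqcup M \sqcup R$, where $M = N^+(Q)\setminus Q$ is the ``middle layer'' and $R = V(D)\setminus N^+[Q]$ is the uncovered set. By the contradiction assumption, $|R| > \frac{1}{2}|V(D)| > |Q \cup M|$, and every vertex of $R$ has an in-neighbor in $M$ but none in $Q$ (otherwise it would lie in $N^+[Q]$).

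Next, I would apply the Chv\'atal--Lov\'asz theorem to the induced subdigraph $D[R]$ to obtain a quasikernel $Q_R$ of $D[R]$, and then attempt to construct a new quasikernel $Q'$ of $D$ by combining $Q_R$ with a carefully chosen subset $Q^*\subseteq Q$, e.g.\ the vertices of $Q$ that have no out-arc into $Q_R$, so that $Q' := Q^* \cup Q_R$ is still independent. By construction $Q_R$ covers all of $R$ within distance $2$, so every vertex of $R$ lies within distance $2$ of $Q'$. The idea is that the contribution $N^+[Q_R] \subseteq R$, which has size at least a positive fraction of $|R|$ (by an inductive application of the conjecture to $D[R]$, or by a direct majority-style estimate inside $R$), should exceed anything forfeited by dropping $Q\setminus Q^*$, contradicting the maximality of $|N^+[Q]|$.

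The main obstacle --- and very likely the reason the statement is still a conjecture --- is ensuring that the dropped vertices $Q\setminus Q^*$, together with any vertex of $M$ whose only short path from $Q$ went through $Q\setminus Q^*$, remain within distance $2$ of $Q'$. A more surgical one-at-a-time swap, where we trade a single $q\in Q$ for a vertex in $N^+(q)\cap R$ and verify local distance conditions before committing, is a natural refinement, and a Hall-type matching between $Q$ and candidate replacements in $R$ might make the simultaneous swap feasible. If this delicate bookkeeping resists a fully general argument, I would retreat to proving the $q$-kernel analog for $q\ge 3$, where the extra slack in the distance constraint makes every coverage obligation much more forgiving (in particular, dropped vertices of $Q$ are automatically within distance $q$ of $Q_R$ as soon as they lie within distance $q-2$ of $R$), and I suspect this is the form in which the paper's actual progress on the conjecture is stated.
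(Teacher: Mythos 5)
This statement is a \emph{conjecture} in the paper (the Large Quasikernel Conjecture), not a theorem: the paper does not prove it, and only establishes the weaker bounds $|N^+[Q]|\ge |V(D)|^{1/3}$ for quasikernels and $|N^+[Q]|\ge\frac13|V(D)|$ for 3-kernels (\Cref{thm:large}), plus the fact that the Small Quasikernel Conjecture would imply it. So your proposal should be judged as an attempt at an open problem, and it does contain genuine gaps beyond ``delicate bookkeeping.''

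Concretely: (1) Your choice of $Q^*$ is vacuous as stated. Since $Q_R\sub R=V(D)\sm N^+[Q]$, no vertex of $Q$ has an out-arc into $Q_R$, so $Q^*=Q$; the independence obstruction runs the other way, namely arcs \emph{from} $Q_R$ \emph{into} $Q$, so the vertices you must drop are those of $Q$ with an in-neighbor in $Q_R$. (2) Once those vertices are dropped, a vertex of $M=N^+(Q)\sm Q$ whose only in-neighbors in $Q$ were dropped may sit at distance greater than $2$ from $Q'$, and there is no mechanism in your argument to recover it; this is exactly the failure mode that has kept the conjecture open, not a technicality. (3) The quantitative step is also shaky: invoking the conjecture on $D[R]$ is fine as induction on $|V(D)|$, but the ``direct majority-style estimate'' only produces a maximal independent set with large closed out-neighborhood (the paper's \Cref{lem:greedy}(b)), not a quasikernel, and the maximality contradiction needs $Q'$ to simultaneously be a quasikernel of all of $D$ and to have strictly larger $|N^+[Q']|$ after the losses from dropping part of $Q$. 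Your closing instinct is exactly right, though: relaxing to $q=3$ is what the paper does. In \Cref{prop:quasiVs3} one takes $T=V(D)\sm N^+(Q)$ (so $Q\sub T$), picks a maximal independent set $Q'$ of $D[T]$ with $|N^+[Q']|\ge\frac12|T|$, and observes that every $v\in Q$ has all its out-neighbors outside $T$, hence is either in $Q'$ or has an in-neighbor in $Q'$; this converts the distance-$2$ coverage by $Q$ into distance-$3$ coverage by $Q'$ and yields \Cref{thm:large}(b). For genuine quasikernels the paper instead proves only the much weaker $|V(D)|^{1/3}$ bound by an entirely different argument involving vertices contained in no quasikernel and large induced acyclic sets.
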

	This bound is asymptotically best possible by considering $Q$ to be an Eulerian touranment.  The main motivation for this conjecture is the following.
	
	\begin{prop}\label{prop:small2Large}
		The Small Quasikernel Conjecture implies the Large Quasikernel Conjecture.
	\end{prop}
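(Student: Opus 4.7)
The plan is to construct, for each integer $k\ge 1$, a source-free auxiliary digraph $D^*_k$ such that the Small Quasikernel Conjecture applied to $D^*_k$ (for $k$ large) forces the existence of a quasikernel $Q$ of $D$ with $|N^+[Q]|\ge \half|V(D)|$.  First assume $D$ is source-free.  Let $V(D^*_k)=V(D)\sqcup V_1\sqcup\cdots\sqcup V_k$, where $V_i=\{v_i:v\in V(D)\}$ is a fresh copy of $V(D)$, and let $E(D^*_k)=E(D)\cup\{vv_i:v\in V(D),\ 1\le i\le k\}$.  Each $v\in V(D)$ keeps its in-neighbors from $D$ and each $v_i$ has $v$ as its unique in-neighbor, so $D^*_k$ is source-free; the Small Quasikernel Conjecture therefore supplies a quasikernel $Q^*$ of $D^*_k$ with $|Q^*|\le \half(k+1)|V(D)|$.

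Write $Q:=Q^*\cap V(D)$ and $S_i:=\{v\in V(D):v_i\in Q^*\}$ for each $i$.  Since the only arcs of $D^*_k$ ending in a vertex of $V(D)$ are arcs of $D$, the set $Q$ is independent in $D$, and any length-$\le 2$ path in $D^*_k$ from $Q^*$ to a vertex of $V(D)$ stays inside $V(D)$; hence $Q$ is a quasikernel of $D$.  On the other hand, $v_i$ has only $v$ as an in-neighbor in $D^*_k$, and a short enumeration of length-$\le 2$ paths from $Q^*$ to $v_i$ shows that $v_i$ is covered iff $v\in S_i\cup Q\cup N^+(Q)=S_i\cup N^+[Q]$.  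Consequently $S_i\supseteq V(D)\setminus N^+[Q]$ for every $i$, and so
\[
\half(k+1)|V(D)|\ \ge\ |Q^*|\ =\ |Q|+\sum_{i=1}^k|S_i|\ \ge\ |Q|+k\bigl(|V(D)|-|N^+[Q]|\bigr).
\]
Rearranging gives $|N^+[Q]|\ge \half|V(D)|-\tfrac{|V(D)|-2|Q|}{2k}$.  For $|Q|\ge 1$ and $k$ sufficiently large (say $k\ge |V(D)|$), the subtracted term is strictly less than $\half$, which together with integrality of $|N^+[Q]|$ forces $|N^+[Q]|\ge \lceil\half|V(D)|\rceil\ge \half|V(D)|$, as desired.

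For general $D$ with source set $U$, the sources must lie in every quasikernel of $D$, so $N^+[Q]\supseteq U\cup N^+(U)$ automatically for any such $Q$.  One reduces to the source-free case by iteratively peeling off $U$ together with $N^+(U)$: the induced subdigraph $D[V(D)\setminus (U\cup N^+(U))]$ has strictly fewer vertices, and recursing (or applying the source-free argument once the remainder becomes source-free) produces a quasikernel of the remainder which combines with the peeled layers to give a quasikernel of $D$ meeting the bound.  The main obstacle is carrying out this source-handling reduction cleanly---checking independence of the combined set and verifying that the closed out-neighborhoods add up to at least $\half|V(D)|$---together with the careful verification in the source-free step that the asymmetric pendant arcs $v\to v_i$ yield exactly the coverage statement $S_i\supseteq V(D)\setminus N^+[Q]$ (no extra short paths into $v_i$ from $V_j$'s for $j\ne i$ exist, which is why each $v_i$ is attached only to its ``parent'' $v$).
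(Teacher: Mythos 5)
Your argument is correct and is essentially the paper's own proof: the same auxiliary digraph with $k$ pendant out-neighbors attached to each vertex, the same counting of uncovered pendants to force $|N^+[Q]|\ge \half|V(D)|-O(1/k)$, and the same integrality step for large $k$. The only cosmetic difference is in handling sources — the paper peels off one source at a time via a minimal counterexample, while you peel off the whole source layer $U\cup N^+(U)$ at once and recurse — but both reductions work for the same reason.
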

	In fact,  we will prove a slightly stronger result \Cref{prop:strongLargeSmall} which shows that proving any non-trivial linear upper bound on the size of a smallest quasikernel implies a non-trivial linear lower bound on $|N^+[Q]|$.  With this, we see that being able to prove a linear lower bound for $|N^+[Q]|$ is a barrier towards improving our \Cref{thm:smallQuasi} into a linear upper bound.

	We are able to take a modest step towards  the Large Quasikernel Conjecture, with us making significantly more progress for 3-kernels.
	
	\begin{thm}\label{thm:large}
		Let $D$ be a digraph.
		\begin{itemize}
			\item[(a)] There exists a quasikernel $Q$ with $|N^+[Q]|\ge |V(D)|^{1/3}$.
			\item[(b)] There exists a 3-kernel $Q$ with $|N^+[Q]|\ge \frac{1}{3} |V(D)|$.
		\end{itemize}
	\end{thm}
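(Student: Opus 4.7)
The plan is to prove both parts by induction on $n := |V(D)|$ via a maximality-based contradiction argument. For part (a), suppose that every quasikernel $Q$ of $D$ satisfies $|N^+[Q]| < n^{1/3}$ and let $Q_0$ be a quasikernel maximizing $|N^+[Q_0]|$. For any $v \in V \setminus N^+[Q_0]$ having no out-arc into $Q_0$, the set $Q_0 \cup \{v\}$ would be an independent quasikernel with strictly greater coverage, contradicting maximality; hence every vertex of $T := V \setminus N^+[Q_0]$ has an out-arc into $Q_0$. Writing $A := N^+[Q_0]$, we have $|A| < n^{1/3}$ and $|T| > n - n^{1/3} > 0$, and the induction hypothesis applied to $D[T]$ yields a quasikernel $Q_T$ of $D[T]$ with $|N^+_{D[T]}[Q_T]| \ge |T|^{1/3}$.

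The key construction is $Q^* := Q_T \cup (Q_0 \setminus N^+(Q_T))$. This set is independent in $D$: since $N^+(Q_0) \subseteq A$, there are no arcs from $Q_0$ into $T$, while any arcs from $Q_T$ into $Q_0$ land inside the excluded set $Q_0 \cap N^+(Q_T)$. It is a quasikernel of $D$: vertices of $T$ are within distance~2 of $Q_T$ inside $D[T]$; each $q \in Q_0 \cap N^+(Q_T)$ is at distance~1 from $Q_T$; vertices of $Q_0 \setminus N^+(Q_T)$ belong to $Q^*$; and every $v \in N^+(Q_0)$ has an in-neighbor $q \in Q_0$ which either lies in $Q^*$ (giving distance~1) or lies in $Q_0 \cap N^+(Q_T)$ (giving distance~2 via $Q_T \to q \to v$). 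Since $N^+_{D[T]}[Q_T] \subseteq T$ and $Q_0 \subseteq A$ are disjoint subsets of $N^+[Q^*]$, we get $|N^+[Q^*]| \ge |T|^{1/3} + |Q_0| \ge (n - n^{1/3})^{1/3} + 1 \ge n^{1/3}$, where the last inequality reduces, upon cubing, to $(3n^{1/3} - 1)(n^{1/3} - 1) \ge 0$ and thus holds for all $n \ge 1$. This contradicts the assumption.

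For part (b), I would run the identical skeleton with 3-kernels and target bound $n/3$: take a maximum 3-kernel $Q_0$, conclude every $v \in T$ has an out-arc into $Q_0$, apply induction on $D[T]$ to obtain a 3-kernel $Q_T$ with $|N^+_{D[T]}[Q_T]| \ge |T|/3$, and form $Q^* := Q_T \cup (Q_0 \setminus N^+(Q_T))$, which the same distance check (now using the extra slack afforded by distance~3) certifies as a 3-kernel of $D$. The main obstacle is the count: the direct estimate $|N^+[Q^*]| \ge |T|/3 + |Q_0|$ can be as small as $2n/9 + 1$, falling short of $n/3$ by $\approx n/9$. Closing this gap is the crux of part (b); I expect it to require a finer accounting of how $N^+[Q_T]$ meets $A$ — for instance by arguing that the out-arcs from $Q_T$ into $A$ typically cover much of $N^+(Q_0)$, not just $Q_0$ — or by imposing extra structural conditions on $Q_0$ to boost the $A$-side contribution to $|N^+[Q^*]|$ from $|Q_0|$ toward $|A|$.
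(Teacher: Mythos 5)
Your part (a) is correct, and it takes a genuinely different route from the paper's. The paper proves (a) by splitting on the size of the set $X$ of vertices lying in no quasikernel, using an out-degree count on the arcs into $X$ in one case and, in the other, extracting a large acyclic set greedily (\Cref{lem:greedy}(a)) and covering it with a quasikernel (\Cref{lem:acyclic}). You instead peel off $A=N^+[Q_0]$ for a coverage-maximizing quasikernel $Q_0$, recurse on $T=V(D)\sm A$, and merge via $Q^*=Q_T\cup(Q_0\sm N^+(Q_T))$ --- a multi-vertex version of the Chv\'atal--Lov\'asz step in \Cref{lem:X}. I verified the independence of $Q^*$, the distance-$2$ condition on all of $T$, $Q_0$, and $N^+(Q_0)$, and the count (the sets $N^+_{D[T]}[Q_T]\sub T$ and $Q_0\sub A$ are indeed disjoint subsets of $N^+[Q^*]$); everything goes through. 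Worth flagging: your recursion closes whenever $g(n-g(n))+1\ge g(n)$, which holds not only for $g(n)=n^{1/3}$ but also for $g(n)=n^{1/2}$ (the inequality $(n-\sqrt n)^{1/2}+1\ge \sqrt n$ reduces to $\sqrt n\ge 1$). So, unless I am missing something, your argument already yields $|N^+[Q]|\ge |V(D)|^{1/2}$, the bound the paper only reaches conditionally on \Cref{conj:acyclic}; you should double-check this and state it if it survives.

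Part (b), however, is not proved, as you acknowledge. The gap is structural, not a matter of "finer accounting": each level of your recursion contributes only $|Q_0|\ge 1$ newly covered vertices while consuming up to (just under) a third of the remaining graph, so iterating gives a bound of order $\log n$, never a linear one; and the one-step estimate $|T|/3+|Q_0|$ genuinely bottoms out near $2n/9$. The missing ingredient in the paper is a one-shot density argument in place of the recursion. First, every digraph contains a \emph{maximal independent set} $S$ with $|N^+[S]|\ge\half|N[S]|$, built by repeatedly adding a vertex of the untouched part with at least as many out-neighbors as in-neighbors there (\Cref{lem:greedy}(b)). Second, if $Q$ is any quasikernel and $T=V(D)\sm N^+(Q)$, then a maximal independent set $Q'$ of $D[T]$ is automatically a $3$-kernel of $D$: since $Q$ is independent, no vertex of $Q$ has an out-neighbor in $T$, so maximality forces each $v\in Q$ to lie in $Q'$ or have an in-neighbor in $Q'$, and every vertex of $D$ is within distance $2$ of $Q$ (\Cref{prop:quasiVs3}). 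In the bad case $|N^+[Q]|<\frac13|V(D)|$ one has $|T|\ge\frac23|V(D)|$, and the greedy bound applied inside $D[T]$ gives $|N^+[Q']|\ge\half|T|\ge\frac13|V(D)|$. That "cover half of all touched vertices" lemma is the idea your proposal lacks.
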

	\subsection{Organization and Conventions}
	Our proofs are broken into four essentially independent sections organized by increasing levels of technical detail: we begin with our general upper bound for quasikernels in \Cref{sec:quasi}, our results about large $q$-kernels in \Cref{sec:large}, our results about disjoint $q$-kernels in \Cref{sec:disjoint}, and  our results about bipartite digraphs in \Cref{sec:bipartite}.  We close the paper with a large number of open problems in \Cref{sec:con}.
	

	Throughout the text, we always assume that our digraphs are finite.  Our proofs will sometimes use without comment the fact that every digraph contains a quasikernel. We let $C_\ell$ denote the directed $\ell$-cycle, i.e. the digraph with vertices $v_1,\ldots,v_\ell$ and arcs $v_iv_{i+1}$ for all $1\le i\le \ell$ (with these indices written cyclically). 
	\section{Proof of \Cref{thm:smallQuasi}: Small Quasikernels}\label{sec:quasi}
	We begin with a simple lemma that will be used in the next section as well.
	\begin{lem}\label{lem:X}
		For every digraph $D$ and $x\in V(D)$, there exists a quasikernel $Q$ of $D$ such that $Q\cap N^+(x)=\emptyset$ and $Q\cap N^-[x]\ne\emptyset$.
	\end{lem}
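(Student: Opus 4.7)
The plan is to adapt the classical Chv\'atal-Lov\'asz argument for existence of a quasikernel, but with $x$ playing the role of their distinguished starting vertex. I would consider the induced subdigraph $D':=D-N^+[x]$ and let $Q'$ be any quasikernel of $D'$, which exists because every digraph admits a quasikernel. Since $Q'\sub V(D)\sm N^+[x]$, the condition $Q\cap N^+(x)=\emptyset$ will be automatic whether the final $Q$ is chosen to be $Q'$ or $Q'\cup\{x\}$, so the only real content is to produce a set that is simultaneously a quasikernel of the full digraph $D$ (not merely of $D'$) and meets $N^-[x]$.

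I would then split into two cases based on whether $Q'\cup\{x\}$ is independent in $D$. By construction of $D'$, no arc of $D$ runs from $x$ into $Q'$, so any failure of independence must come from an arc $ux\in E(D)$ with $u\in Q'$. In the independent case I set $Q:=Q'\cup\{x\}$, witnessing the intersection with $N^-[x]$ via $x$ itself; then every $v\in N^+[x]$ is at $D$-distance at most $1$ from $x$, and every $v\in V(D')$ is at $D$-distance at most $2$ from $Q'$ using arcs of $D'\sub D$. In the non-independent case I instead set $Q:=Q'$, with the required witness being any $u\in Q'$ satisfying $ux\in E(D)$, which lies in $N^-(x)\sub N^-[x]$; independence is inherited from $D'$, vertices of $V(D')$ are handled as before, $x$ is reached from $u$ in one step, and each $w\in N^+(x)$ is reached from $u$ along the length-two path $u\to x\to w$.

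I do not expect any serious obstacle: the whole argument is essentially the Chv\'atal-Lov\'asz proof specialized so that its free initial vertex is $x$, and no auxiliary quantitative estimate is needed. The one point that warrants care is verifying, in the second case, that $Q'$ really is a quasikernel of the full digraph $D$ and not just of $D'$, but this is delivered for free by the length-two paths through $x$ described above.
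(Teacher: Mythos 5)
Your proposal is correct and follows essentially the same argument as the paper: pass to $D'=D-N^+[x]$, take a quasikernel $Q'$ there, and split on whether $Q'$ meets $N^-(x)$ (equivalently, whether $Q'\cup\{x\}$ is independent), returning $Q'$ or $Q'\cup\{x\}$ accordingly. The verifications you supply are exactly the ones the paper leaves as ``straightforward.''
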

	\begin{proof}
		Let $D'=D-N^+[x]$ and let $Q'$ be any quasikernel of $D'$, noting that $Q'\cap N^+(x)=\emptyset$ by construction.  If $Q'\cap N^-(x)\ne\emptyset$, then it is straightforward to show that $Q'$ is a quasikernel for $D$ which intersects $ N^-[x]$. If instead $Q'\cap N^-(x)=\emptyset$, then $Q=Q'\cup \{x\}$ is an independent set, and it is not difficult to see that $Q$ is a quasikernel which satisfies $Q\cap N^+(x)=\emptyset$ and $Q\cap N^{-}[x]\ne \emptyset$, giving the result.
	\end{proof}

	We also need the following  observation.
	\begin{lem}\label{lem:ind}
		If $D$ is a digraph with minimum in-degree $\del$ and maximum out-degree $\Del$, then every independent set $I$ of $D$ satisfies \[|I|\le |V(D)|-\frac{\del}{\del+\Del}|V(D)|.\]
	\end{lem}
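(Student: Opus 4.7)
The plan is to prove this via a simple double-counting argument applied to the arcs entering the independent set $I$.

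Let $J = V(D)\setminus I$. Because $I$ is independent, there are no arcs between two vertices of $I$, so every in-arc of a vertex of $I$ must originate in $J$. First I would count these arcs from the target side: each vertex of $I$ has in-degree at least $\delta$, so the number of arcs from $J$ to $I$ is at least $\delta |I|$. Then I would bound the same quantity from the source side: each vertex of $J$ has out-degree at most $\Delta$, so the number of arcs out of $J$ is at most $\Delta|J|$, and in particular the number of arcs from $J$ to $I$ is at most $\Delta|J|$.

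Combining gives $\delta |I| \le \Delta |J| = \Delta(|V(D)|-|I|)$, which after rearranging yields $(\delta+\Delta)|I|\le \Delta|V(D)|$, i.e.\ $|I|\le \frac{\Delta}{\delta+\Delta}|V(D)| = |V(D)| - \frac{\delta}{\delta+\Delta}|V(D)|$, as desired. There is no real obstacle here; the only thing to be slightly careful about is that the independence of $I$ is what guarantees that the $\delta|I|$ in-arcs to $I$ all come from $J$ (rather than from inside $I$), which is precisely what makes the two bounds compatible.
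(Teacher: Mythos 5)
Your proof is correct and is essentially the same double-counting argument the paper uses: it counts the arcs from $V(D)\setminus I$ to $I$ once from the target side (at least $\delta|I|$, using independence of $I$) and once from the source side (at most $\Delta|J|$), then rearranges. No differences worth noting.
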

	\begin{proof}
		Let $I$ be an independent set and let $D'\sub D$ be the subdigraph consisting only of the arcs that go from $V(D)\sm I$ to $I$. Because $I$ is independent, each $v\in I$ has $\deg_{D'}^-(v)=\deg_D^-(v)\ge \del$.  From this we conclude
		\[\del |I|\le \sum_{v\in I} \deg_{D'}^-(v)=\sum_{u\in V(D)\sm I} \deg_{D'}^+(u)\le  \Del (|V(D)|-|I|).\]
		Rearranging gives the  result.
	\end{proof}
	These two results give our weaker bound on the size of small quasikernels.
	\begin{prop}\label{prop:weakSmallQuasi}
		If $D$ is a source-free digraph, then $D$ contains a quasikernel $Q$ with \[|Q|\le |V(D)|-\lfloor \sqrt{|V(D)|}\rfloor.\]
	\end{prop}
	\begin{proof}
		First consider the case that $D$ contains a vertex $x$ with out-degree at least $\lfloor  \sqrt{|V(D)|} \rfloor$.  By \Cref{lem:X}, there exists a quasikernel $Q$ disjoint from $N^+(x)$, so \[|Q|\le |V(D)|-|N^+(x)|\le |V(D)|-\lfloor  \sqrt{|V(D)|}\rfloor,\] proving the bound.  
		
		We now assume $D$ has maximum out-degree at most $\lfloor  \sqrt{|V(D)|}\rfloor-1$, noting that the source-free condition is equivalent to saying that $D$ has minimum in-degree at least 1.  By \Cref{lem:ind}, any independent set (and in particular, any quasikerenl) has size at most 
		\[|V(D)|-\frac{1}{\lfloor  \sqrt{|V(D)|}\rfloor} |V(D)|\le |V(D)|- \sqrt{|V(D)|},\]
		proving the result.
	\end{proof}
	As an aside, this same approach can be used to show that digraphs with minimum in-degree $\delta$ contain quasikernels of size at most $|V(D)|-\sqrt{\del |V(D)|}+\delta$; see also the discuss around \Cref{quest:minDegree} for problems related to this.
	
	To improve \Cref{prop:weakSmallQuasi}, we extend the argument of \Cref{lem:X} by deleting a set $A$ of vertices rather than just a single vertex.  To this end, we say a set $A\sub V(D)$ is \textit{kernel-perfect} if for every $A'\sub A$, the digraph $D[A']$ contains a kernel.    
	\begin{lem}\label{lem:kernelPerfect}
		If $D$ is a digraph and $A\sub V(D)$ is kernel-perfect, then $D$ contains a quasikernel which is disjoint from $N^+(A)$.
	\end{lem}
	We note that this lemma follows from a more general recent result \cite[Lemma 3.1]{ai2024variable}, but for completeness we include the easy proof.
	\begin{proof}
		Let $D'=D-N^+[A]$ and let $Q'$ be any quasikernel of $D'$, noting that $Q'\cap N^+(A)=\emptyset$ by construction.  Let $A'=A\sm N^+(Q')$, and by hypothesis there exists some $K\sub A'$ which is a kernel of $D[A']$.  Define $Q=Q'\cup K$, noting that this is an independent set since $Q'$ is an independent set disjoint from $N^+[A]\supseteq N^+[K]$ and $K\sub A'$ is an independent set disjoint from $N^+[Q']$.  It is straightforward to show that every vertex can be reached from $Q$ by a path of length at most 2, proving the result.
	\end{proof}
	Our goal will be to use \Cref{lem:kernelPerfect} to show that if $D$ is a source-free digraph, then either every independent set has size at most $|V(D)|-m$ or there exists a kernel-perfect set $A$ such that $|N^+(A)|\ge m$ for some $m\approx \sqrt{|V(D)|\log |V(D)|}$.  This bound on $m$ is the best one can obtain with this approach.  Indeed, if one takes $T$ to be a random tournament on $\sqrt{n\log n}$ vertices and then forms a digraph $D$ from $T$ by adding $\sqrt{n/\log n}$ leaves to each vertex of $T$, then $|V(D)|\approx n$, its largest independent set $D-T$ is around size $|V(D)|-\sqrt{|V(D)|\log |V(D)|}$, and the best kernel-perfect sets $A$ one can use are transitive subtournaments of $T$ which all have size about $\log |V(D)|$.
	
	Partially motivated by this extremal example for our approach, we observe the following.
	\begin{lem}\label{lem:largeKernelPerfect}
		If $D$ is a non-empty digraph, then there exists a set of vertices $A\sub V(D)$ which is kernel-perfect for $D$ with $|A|\ge \log |V(D)|$.
	\end{lem}	
	\begin{proof}
		First observe that if $D$ contains a directed two-cycle $uv,vu$ and if $A$ is kernel-perfect for the digraph  $D-uv$, then $A$ is still kernel-perfect for $D$ (since any kernel of $D'[A']$ is still independent and a kernel in $D[A']$).  As such, it suffices to find a large kernel-perfect subset of the digraph $D'\sub D$ obtained by arbitrarily deleting one arc from each directed 2-cycle of $D$.
		
		Let $D''\supseteq D'$ be an arbitrary tournament on the same vertex set of $D'$ obtained by arbitrarily adding arcs between vertices $u,v$ which have no arcs between them (noting that this $D''$ will be a tournament since $D$ contains no directed 2-cycles).  A well known result (see \cite[Exercise 10.44]{lovasz2007combinatorial}) implies there exists a set of vertices $A\sub V(D'')$ of size at least $1+\lfloor \log |V(D'')|\rfloor\ge \log|V(D)|$ such that $D''[A]$ is a transitive tournament.  This implies that for any $A'\sub A$, the digraph $D'[A']$ contains no directed cycles, and it is known that such digraphs always contain (unique) kernels (see \cite[Exercise 8.5]{lovasz2007combinatorial}).  We conclude that $A$ is a kernel-perfect subset of $D'$ of the desired size, proving the result.
	\end{proof}
	We now have all we need to prove our main result about small quasikernels.
	\begin{proof}[Proof of \Cref{thm:smallQuasi}]
		Recall that we wish to prove that every source-free digraph $D$ has a quaikernel $Q$ with 
		\[|Q|\le |V(D)|-\max\{\lfloor \sqrt{|V(D)|}\rfloor,\quart \sqrt{|V(D)|\log |V(D)|}\}.\] 
		In view of \Cref{prop:weakSmallQuasi}, we only need to prove the bound $|Q|\le |V(D)|-\quart \sqrt{|V(D)|\log |V(D)|}$, and for ease of notation we let $n=|V(D)|$ and $m=\quart \sqrt{n\log n}$.  If every independent set of $D$ has size at most $n-m$ then we are done, so we can assume there exists an independent set $I$ of size at least $n-m$.  Our aim now is to show the following.
		\begin{claim}
			There exists a kernel-perfect subset $A\sub V(D)$ such that $|N^+(A)|\ge m$.
		\end{claim}
		\begin{proof}
			The result is trivial if there exists a vertex $u$ with $\deg^+(u)\ge m$, so we may assume this is not the case.
			
			We will prove the claim by finding such an $A$ which is disjoint from $I$ with the additional property that $|N^+(A)\cap I|\ge m$.  To this end, define the leftover vertices $L=V(D)\sm I$.  Because $D$ is source-free and $I$ is an independent set, there exists some function $f:I\to L$ such that $f(v)\in N^-(v)$ for every $v\in I$ (e.g.\ by defining $f(v)$ to be the smallest element of $N^-(v)$ under some arbitrary ordering of $V(D)$).  For each $u\in L$, define its weight $\phi(u)=|f^{-1}(u)|$, and for any given set $U\sub L$ define $\phi(U)=\sum_{u\in U}\phi(u)$.  Observe from the definitions that for any $A\sub L$ we have
			\[|N^+(A)\cap I|\ge \phi(A),\]
			so it will suffice to find a kernel-perfect $A$ with $\phi(A)\ge m$.  Towards this end, we observe from the definitions that
			\[\phi(L)=|I|\ge n-m\]\[|L|=n-|I|\le m.\] 
	
			Let $L'\sub L$ consist of the vertices $u$ with $\phi(u)\le \half n m^{-1}$ and observe that
			\[\phi(L\sm L')=\phi(L)-\phi(L')\ge n-m-|L|\cdot \half n m^{-1}\ge \half n-m.\]
			For each $i\ge 0$ define the set \[L_i=\{u\in L:2^{i-1}n m^{-1}<\phi(u)\le 2^i n m^{-1}\},\] noting that these $L_i$ sets partition the vertices of $L\sm L'$.
				
			We claim that $L_i=\emptyset$ for any $i$ with $2^{i-1}\ge  n^{-1}m^2$.  Indeed, any $u$ in such an $L_i$ would have \[|N^+(u)|\ge \phi(u)> 2^{i-1}n m^{-1}\ge m,\]
			a contradiction to our assumption at the start of the claim.  We also claim that some $i\ge 0$ has $\phi(L_i)\ge 2^{-1-i}(\half n-m)$, as otherwise we would have
			\[\phi(L\sm L')=\sum_{i\ge 0} \phi(L_i)<\left(\half n-m\right)\sum_{i\ge 0} 2^{-1-i}=\half n-m,\]
			contradicting our bound above.
			
			In total, these subclaims imply the existence of some $i$ such that $\phi(L_i)\ge 2^{-1-i}(\half n-m)$ and such that $2^{i}<2n^{-1}m^2$.  By definition of $L_i$ this implies
			\[2^{-1-i}\left(\half n-m\right)\le \phi(L_i)\le |L_i|\cdot 2^{i-1}n m^{-1},\]
			and rearranging gives
			\[|L_i|\ge 2^{-2i}\left(\half m-n^{-1}m^2\right)>\frac{1}{8} n^2 m^{-3}-\quart nm^{-2}\ge \frac{1}{16} n^2 m^{-3} \ge n^{1/4},\]
			where the second to last inequality used $n^2 m^{-3}\ge 4 nm^{-2}$ (equivalently that $n\ge 4 m$, i.e.\ that $n\ge \log n$), and the last inequality used $\frac{1}{16} n^2 m^{-3}= 4 n^{1/2} (\log n)^{-3/2}$ and $4  n^{1/4}\ge  (\log n)^{-3/2}$ for $n\ge 2$ (which must hold if $D$ is source-free). By \Cref{lem:largeKernelPerfect}, there exists some $A\sub L_i$ such that $A$ is a kernel-perfect subset of $D[L_i]$ (and hence of $D$) and such that $|A|\ge \log(|L_i|)\ge \quart \log(n)$.  By definition of $A\sub L_i \sub L\sm L'$ this implies
			\[|N^+(A)|\ge \phi(A)\ge |A|\cdot  \half n m^{-1}\ge \frac{1}{8} n \log(n) m^{-1}=\half \sqrt{n \log n}\ge m,\]
			proving the claim.
		\end{proof}
		The result follows from this claim together with \Cref{lem:kernelPerfect}.
	\end{proof}

	\section{Proof of \Cref{thm:large}: Large $q$-kernels}\label{sec:large}
	We begin by proving a slight strengthening of \Cref{prop:small2Large}, namely that weak variants of the Small Quasikernel imply weak variants of the Large Quasikernel Conjecture.
	
	\begin{prop}\label{prop:strongLargeSmall}
		If $0<\ep\le 1$ is a real number such that every source-free digraph $D'$ contains a quasikernel $Q'$ with $|Q'|\le (1-\ep)|V(D')|$, then every digraph $D$ contains a quasikernel $Q$ with $|N^+[Q]|\ge \ep |V(D)|$.
	\end{prop}
	Note that the case $\ep=1/2$ is exactly the statement of \Cref{prop:small2Large}.
	\begin{proof}[Proof of \Cref{prop:small2Large}]
		Let $D$ be a minimal counterexample to the statement for some given value of $\ep$.  If $D$ contains a source $v$, then  $D$ being a minimal counterexample implies we can find a quasikernel $Q'$ in $D'=D-N^+_D[v]$ with \[|N^+_{D'}[Q']|\ge \ep (|V(D)|-|N^+_D[v]|).\]  One can then verify that $Q:=Q'\cup \{v\}$ is a quasikernel of $D$ and that \[|N^+_D[Q]|= |N^+_{D'}[Q']|+|N^+_D[v]|\ge \ep |V(D)|+(1-\ep)|N^+_D[v]|\ge \ep |V(D)|,\] giving the result.  Thus we may assume $D$ is source-free.  
		
		Let $k$ be a large integer to be determined later and define a digraph $D'$ by taking each vertex $v\in V(D)$ and adding $k$ new vertices $v_1,\ldots,v_k$ together with the arcs $vv_i$ for all $i$.  Let $Q'$ be a smallest quasikernel of $D'$, noting that $|Q'|\le  (1-\ep)|V(D')|=(1-\ep)(k+1)|V(D)|$ by hypothesis. 
		
		It is not difficult to verify that $Q:=Q'\cap V(D)$ is a quasikernel of $D$ and that $Q'\sm Q=Q'\sm V(D)$ contains every vertex $v_i$ with $v\in V(D)\sm N^+_D[Q]$ (as otherwise $Q'$ would not be a quasikernel for $D'$).  These observations imply
		\[(1-\ep)(k+1)|V(D)|\ge |Q'|\ge |Q'\sm Q|\ge k(|V(D)|-|N^+_D[Q]|),\]
		and rearranging  gives
		\[|N^+[Q]|\ge \ep |V(D)|-\frac{1-\ep}{k}|V(D)|.\]
		Since $|N^+[Q]|$ is an integer, this implies $|N^+[Q]|\ge \lceil \ep|V(D)|-\frac{1-\ep}{k}|V(D)|\rceil$ for all $k\ge 1$.  By taking $k$ sufficiently large, we see that this ceiling is at least $\ep |V(D)|$, proving the result.
	\end{proof}

	We now prove that digraphs have $q$-kernels with $|N^+[Q]|$ relatively large.  The following observations will be useful, where here we say an independent set $S$ is \textit{maximal} if $S\cup \{v\}$ is not independent for all $v\notin S$ and that it is \textit{acyclic} if it contains no directed cycles.
	
	\begin{lem}\label{lem:greedy}
		Let $D$ be a digraph.
		\begin{itemize}
			\item[(a)] If every vertex of $D$ has out-degree at most $\Del$, then there exists a set $A\sub V(D)$ such that $D[A]$ is acyclic with $|A|\ge \frac{|V(D)|}{\Del+1}$.
			\item[(b)] There exists a maximal independent set $S$ with $|N^+[S]|\ge \half |V(D)|$.
		\end{itemize}
	\end{lem}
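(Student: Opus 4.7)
For part (a) the plan is a standard random-order argument. Pick a uniformly random linear order $\pi$ on $V(D)$, and let $A$ consist of those $v\in V(D)$ that precede every vertex of $N^+(v)$ in $\pi$. Any arc inside $D[A]$ goes from a $\pi$-earlier vertex to a $\pi$-later one, so $\pi|_A$ is a topological order on $D[A]$ and hence $D[A]$ is acyclic. Linearity of expectation gives
\[\E|A| = \sum_{v\in V(D)}\frac{1}{|N^+(v)|+1} \ge \frac{|V(D)|}{\Del+1},\]
so some realization of $\pi$ produces an $A$ of the claimed size.

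For part (b), the first observation is that enlarging an independent set $S$ can only enlarge $N^+[S]$, so it suffices to exhibit \emph{some} independent $S$ with $|N^+[S]|\ge\half|V(D)|$ and then extend greedily to a maximal independent set. My plan is to analyze the \emph{random-greedy} maximal independent set obtained by taking a uniformly random order $\pi$ of $V(D)$, processing vertices in that order, and adding each $v$ to $S$ iff $v$ has no neighbor (in- or out-) currently in $S$. I will prove by induction on $|V(D)|$ that $\E|N^+[S]|\ge\half|V(D)|$. Conditioning on the first vertex $v$ of $\pi$, the remainder of the process is distributed as the random-greedy maximal independent set of the smaller digraph $D':=D[V(D)\sm N[v]]$, since every vertex of $N(v)$ is automatically refused by the greedy rule (its neighbor $v$ already lies in $S$). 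Because $N^+_D[v]\sub N[v]$ and $N^+_{D'}[S'']\sub V(D)\sm N[v]$ live in disjoint regions of $V(D)$, we get the additive estimate
\[|N^+_D[\{v\}\cup S'']| \ge |N^+_D[v]|+|N^+_{D'}[S'']|.\]
Invoking the induction hypothesis on $D'$, averaging over the uniformly chosen first vertex $v$, and combining with the elementary identities $\sum_v|N^+_D[v]|=|V(D)|+|E(D)|$ and $\sum_v|N(v)|\le 2|E(D)|$ (the latter with equality iff $D$ has no 2-cycles) yields the desired $\E|N^+[S]|\ge\half|V(D)|$.

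The step I anticipate as the main (albeit routine) obstacle is verifying the conditional-distribution claim: once $v$ is fixed as the first vertex of $\pi$, the rest of the greedy process on $V(D)\sm\{v\}$ really is distributed as the random-greedy procedure on $D'$, because every $u\in N(v)$ encounters the neighbor $v\in S$ the instant it is processed and is thus rejected irrespective of the rest of the order. Once this sanity check is granted, the induction step collapses to the two counting identities above, and the proof of (b) falls out.
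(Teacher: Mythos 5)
Your proposal is correct, and on both parts it takes a genuinely different (probabilistic) route from the paper. For (a), the paper avoids randomness entirely: it takes a maximum $A$ with $D[A]$ acyclic and notes that if $|A|<|V(D)|/(\Del+1)$ then $|N^+[A]|<|V(D)|$, so some $v\notin N^+[A]$ can be added without creating a directed cycle --- a one-line maximality contradiction in place of your random-order expectation computation; the bounds are identical. For (b), the paper runs a deterministic greedy rather than a random one: writing $N[S]=N^+[S]\cup N^-(S)$, it grows an independent set maintaining the invariant $|N^+[S]|\ge\frac12|N[S]|$ by repeatedly adding, in the digraph induced on $V(D)\sm N[S]$, a vertex whose out-degree is at least its in-degree (such a vertex exists because the two degree sums agree). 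This is essentially a derandomization of your random-greedy MIS: the paper's local ``median vertex'' step replaces your global identities $\sum_v|N^+[v]|=|V(D)|+|E(D)|$ and $\sum_v|N[v]|\le|V(D)|+2|E(D)|$, and needs no induction on $|V(D)|$. Your version is sound --- the conditional-distribution claim you flag is exactly right for the reason you give (vertices of $N(v)$ never enter $S$, hence never influence later decisions), and the final averaging does close, in fact giving the marginally stronger $\E|N^+[S]|\ge\frac12(|V(D)|+1)$ for nonempty $D$ --- but it is longer than the paper's argument, whose payoff is brevity and an invariant that holds at every stage of the construction rather than only at the end.
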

	As an aside, \Cref{lem:greedy}(b) can be viewed as a very weak version of the Large Quasikernel Conjecture (as the Large Quasikernel Conjecture requires finding such a set which is additionally a quasikernel).
	\begin{proof}
		For (a), let $A$ be a largest set such that $D[A]$ is acyclic, noting that at least one such set exists by considering $A=\emptyset$.  If $|A|< \frac{|V(D)|}{\Del+1}$, then there exists some vertex $v\notin N^+[A]$ (since $N^+[A]$ has size at most $|A|(\Del+1)<|V(D)|$). Observe that $D[A\cup \{v\}]$ continues to be acyclic (since $v$ can not be a vertex in any directed cycle with $A$ by construction), contradicting the maximality of $|A|$ and giving the result.
		
		For (b), we introduce the notation $N[S]=N^+[S]\cup N^-(S)$, i.e.\ this is the set of vertices which are contained in some arc containing a vertex of $S$.  Let $S$ be a largest independent set satisfying  \begin{equation}|N^+[S]|\ge \half |N[S]|,\label{eq:S}\end{equation} noting that at least one such independent set exists by considering $S=\emptyset$.  
		
		Let $T=V(D)\sm N[S]$.  If $T=\emptyset$ (i.e. if $N[S]=V(D)$) then $S$ is a maximal independent set with $|N^+[S]|\ge \half |V(D)|$, so we may assume for contradiction that this is not the case. Let $D'=D[T]$.  Because $D'$ is a non-empty digraph, there exists some vertex $v\in T$ with \begin{equation}|N^+_{D'}[v]|\ge \half |N_{D'}[v]|,\label{eq:v}\end{equation} 
		namely this holds by taking any $v\in T$ with at least as many out-neighbors as in-neighbors.  In this case, $S'=S\cup \{v\}$ is a larger independent set satisfying
		\[N_D[S']=N_D[S]\cup (N_{D}[v]\sm N_D[S])=N_D[S]\cup N_{D'}[v],\]
		and
		\[N_D^+[S']=N_D^+[S]\cup (N^+_D[v]\sm N_D^+[S])\supseteq N_D^+[S]\cup (N_{D}^+[v]\sm N_D[S])= N_D^+[S]\cup N_{D'}^+[v].\]
		Combining these with \eqref{eq:S} and \eqref{eq:v} gives \[|N^+_D[S']|\ge |N^+_D[S]|+|N^+_{D'}[v]|\ge  \half |N_D[S]|+  \half |N_{D'}[v]|= \half |N_D[S']|,\]
		a contradiction to the maximality of $|S|$.  We conclude that a maximal independent set with the desired properties must exist.
	\end{proof}
	
	We next show that if $D$ contains a small quasikernel, then $D$ contains a large 3-kernel.
	
	\begin{prop}\label{prop:quasiVs3}
		If $D$ is a digraph and $Q\sub V(D)$ is a quasikernel with $|N^+[Q]|=\ep |V(D)|$ for some $0<\ep<1$, then there exists a 3-kernel $Q'\sub V(D)$ with $|N^+[Q']|\ge \frac{1-\ep}{2}|V(D)|$.
	\end{prop}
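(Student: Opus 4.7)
The plan is to set $T := V(D)\sm N^+[Q]$, which has size $(1-\ep)|V(D)|$, and to build the 3-kernel $Q'$ as the union of an independent set $S\sub T$ with a carefully chosen subset of $Q$. The role of $S$ is to guarantee the lower bound on $|N^+[Q']|$, while the role of the added vertices of $Q$ is to guarantee the 3-kernel property.

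First I would apply \Cref{lem:greedy}(b) to the induced subdigraph $D[T]$ to obtain an independent set $S$ of $D[T]$ (hence of $D$) satisfying $|N^+_{D[T]}[S]|\ge \half|T| = \frac{1-\ep}{2}|V(D)|$. This already delivers the desired lower bound on $|N^+[Q']|$ since $N^+_{D[T]}[S]\sub N^+_D[Q']$ whenever $S\sub Q'$. However, $S$ on its own need not be a 3-kernel of $D$, as it has no structural reason to reach the vertices of $Q\cup N^+(Q)$.

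To repair this I would set
\[ Q_S := \{q\in Q:\text{no }s\in S\text{ has }sq\in E(D)\},\qquad Q' := S\cup Q_S, \]
i.e.\ add to $S$ exactly those vertices of $Q$ not already one-step reached by $S$. Independence of $Q'$ should fall out of the setup: $S$ is independent in $D$, $Q_S\sub Q$ is independent, no arc from $Q$ to $S$ can exist since $S\sub T$ is disjoint from $N^+(Q)$, and no arc from $S$ to $Q_S$ exists by the defining property of $Q_S$. To confirm that $Q'$ is a 3-kernel of $D$, for any $v\in V(D)$ I would pick $q\in Q$ with $\dist(q,v)\le 2$, using that $Q$ is a quasikernel; if $q\in Q_S\sub Q'$ then $\dist(Q',v)\le 2$, and otherwise some $s\in S\sub Q'$ has $s\to q$, which gives $\dist(Q',v)\le 1+2=3$.

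The conceptual crux is the asymmetry induced by the choice $S\sub T$: arcs from $Q$ to $S$ are structurally forbidden, while arcs from $S$ into $Q$ are harmless and are precisely what lets us drop the corresponding $q$'s from $Q'$ without losing 3-coverage. Beyond that, the calculation $|N^+[Q']|\ge |N^+_{D[T]}[S]|\ge \frac{1-\ep}{2}|V(D)|$ is immediate, so I do not expect a genuine obstacle once the construction is set up correctly.
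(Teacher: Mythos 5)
Your proposal is correct and follows essentially the same route as the paper: both remove the out-neighborhood of $Q$, apply \Cref{lem:greedy}(b) to what remains, and use the fact that arcs from $Q$ into the remainder are forbidden to get the 3-kernel property. The only (cosmetic) difference is that the paper keeps $Q$ inside $T$ and lets maximality of the independent set force each $q\in Q$ to be in-dominated, whereas you excise $Q$ and explicitly re-insert the undominated vertices $Q_S$; both verifications go through.
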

	\begin{proof}
		Let $T=V(D)\sm N^+(Q)$, noting that $|T|\ge (1-\ep)|V(D)|$ and that $Q\sub T$.  By \Cref{lem:greedy}(b), the digraph $D[T]$ contains a  maximal independent set $Q'$ with
		\[|N^+_D[Q']|\ge |N^+_{D[T]}[Q']|\ge \half |T|\ge \frac{1-\ep}{2}|V(D)|.\]
		It thus remains to show that $Q'$ is a 3-kernel.  Note that $Q'$ is an independent set by construction.  For any $w\in V(D)$, we know there exists some $v\in Q$ such that $\dist(v,w) \leq 2$.  Observe that $v\in Q$ has $N^+(v)\sub N^+(Q)$ since $Q$ is an independent set, and hence $v$ has no out-neighbors in $T=V(D)\sm N^+(Q)$.  As such, since $Q'$ is a maximal independent set of $D[T]$ and $v\in Q\sub T$, either $v\in Q'$ or some in-neighbor of $v$ lies in $Q'$.  In either case we find $\dist(Q',w)\le 1+\dist(v,w)\le 3$.  This shows $Q'$ is a 3-kernel, proving the result.
	\end{proof}
	
	\Cref{prop:quasiVs3} is essentially a refinement of \Cref{thm:large}(b), and in particular implies this result.
	
	\begin{proof}[Proof of \Cref{thm:large}(b)]
		Recall that we aim to show every digraph $D$ contains a 3-kernel with $|N^+[Q]|\ge \frac{1}{3}|V(D)|$.  Let $Q\sub V(D)$ be any quasikernel.  If $|N^+[Q]|\ge \frac{1}{3}|V(D)|$ then we are done, otherwise there exists a 3-kernel of the desired size by \Cref{prop:quasiVs3}.
	\end{proof}
	It remains to show that digraphs always contain large quasikernels.  For this we use the following.

	\begin{lem}\label{lem:acyclic}
		If $D$ is a digraph and $A\sub V(D)$ is such that $D[A]$ is acyclic, then there exists a quasikernel $Q\sub V(D)$ such that $A\sub N^+[Q]$.
	\end{lem}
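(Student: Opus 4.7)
The plan is to prove this by induction on $|V(D)|$, mimicking the classical Chv\'atal--Lov\'asz proof that every digraph has a quasikernel but reinforcing it so that $A$ is controlled. When $A=\emptyset$, any quasikernel of $D$ (which exists by Chv\'atal--Lov\'asz) trivially works, so assume $A\ne\emptyset$. The key move is to pick $a\in A$ to be a \emph{sink} of $D[A]$, which exists because $D[A]$ is acyclic and non-empty; that is, $N^+_D(a)\cap A=\emptyset$. Set $D_1:=D-N^+[a]$ and $A_1:=A\cap V(D_1)$; the sink property gives $A_1=A\setminus\{a\}$, and $D_1[A_1]\subseteq D[A]$ is still acyclic, so the inductive hypothesis produces a quasikernel $Q_1$ of $D_1$ with $A_1\subseteq N^+_{D_1}[Q_1]$.

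From there I split according to whether $Q_1\cap N^-_D(a)$ is empty. If it is not, $Q_1$ itself should already be a quasikernel of $D$: it is independent in $D$, and any $x\in N^+[a]$ is reached from $Q_1$ within two steps via $q\to a\to x$ for some $q\in Q_1\cap N^-(a)$. Moreover $a\in N^+_D[Q_1]$ directly and $A_1\subseteq N^+_D[Q_1]$ because $D_1$ is an induced subdigraph of $D$, so $A=A_1\cup\{a\}\subseteq N^+_D[Q_1]$. If instead $Q_1\cap N^-_D(a)=\emptyset$, then $Q:=Q_1\cup\{a\}$ is independent (no arcs from $a$ into $Q_1\subseteq V(D)\setminus N^+[a]$, and none from $Q_1$ into $a$ by the case hypothesis) and is a quasikernel of $D$ by the standard Chv\'atal--Lov\'asz check; since $a\in Q$, the inclusion $A\subseteq N^+[Q]$ is immediate.

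The main subtlety --- and the entire reason for taking a sink of $D[A]$ rather than a source --- is that the deletion $D-N^+[a]$ should not strip $A$ of any vertex other than $a$, since otherwise induction would leave some $A$-vertex outside the final $N^+[Q]$. Choosing a sink guarantees exactly this, because $A\cap N^+(a)=\emptyset$ forces $A_1=A\setminus\{a\}$. Beyond this choice, every remaining verification is a routine adaptation of the Chv\'atal--Lov\'asz dichotomy, and I do not expect any further obstacles.
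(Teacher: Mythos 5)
Your proof is correct, but it takes a genuinely different route from the paper. You run a strengthened Chv\'atal--Lov\'asz induction: peel off $N^+[a]$ for a vertex $a$ chosen to be a \emph{sink} of $D[A]$, so that the deletion removes only $a$ from $A$, then apply the usual dichotomy on whether $Q_1$ meets $N^-(a)$; all the verifications you sketch (independence, the distance-2 condition via $q\to a\to x$, and $A\sub N^+[Q]$ in both cases) go through, and the recursion terminates since $|V(D)|$ strictly decreases. The paper instead argues non-inductively: it extends $A$ to a \emph{maximal} set $S$ with $D[S]$ acyclic, takes the unique kernel $Q$ of the acyclic digraph $D[S]$ (a classical fact), and observes that maximality forces every $v\notin S$ to have an in-neighbor in $S$, whence $Q$ is a quasikernel of $D$ with $A\sub S\sub N^+[Q]$. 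The paper's argument is shorter and yields the slightly stronger conclusion that an entire maximal acyclic superset of $A$ is covered by $N^+[Q]$; yours is self-contained (it does not invoke the existence of kernels in acyclic digraphs) and makes transparent exactly where the freedom in the Chv\'atal--Lov\'asz construction is being spent, namely on the choice of a sink of $D[A]$ at each step. Both are valid proofs of the lemma.
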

	\begin{proof}
		Let $S\supseteq A$ be a maximal subset such that $D[S]$ is acyclic.  Observe that every $v\notin S$ has an in-neighbor in $S$, as otherwise $S\cup \{v\}$ would be a larger subset inducing an acyclic digraph (since $v$ can not be a part of any directed cycle in $S\cup \{v\}$).
		
		It is known that every acyclic digraph contains a (unique) kernel (see \cite[Exercise 8.5]{lovasz2007combinatorial}), and we let $Q$ denote the kernel of $D[S]$.  Observe that $A\sub S\sub N^+[Q]$ by definition of $Q$ being a kernel of $D[S]$.  We also have $\dist(Q,v)\le 2$ for all $v\notin S$, since by the observation above, $v$ has an in-neighbor in $S$ which is at distance at most 1 from $Q$.   Thus $Q$ is a quasikernel with the desired properties, giving the result.
	\end{proof}
	We now have all we need to prove our main result for this section.
	\begin{proof}[Proof of \Cref{thm:large}(a)]
		Recall that we aim to show every digraph contains a qusaikernel with $|N^+[Q]|\ge |V(D)|^{1/3}$.  Let $X\sub V(D)$ be the set of vertices $x$ which are not contained in any quasikernel of $D$.
		
		First consider the case $|X|\ge |V(D)|-|V(D)|^{2/3}$.  By the second half of \Cref{lem:X}, we in particular see that for each $x\in X$, there exists some $y\in V(D)\sm X$ such that $yx\in E(D)$ (namely, by considering the quasikernel $Q$ which intersects $N^+[x]$ after noting that $Q$ is disjoint from $X$ by definition of $X$).  This implies that there are at least $|X|$ arcs from $V(D)\sm X$ to $X$, and hence there must exist some $y\in V(D)\sm X$ with 
		\[\deg^+(y)\ge \frac{|X|}{|V(D)|-|X|}\ge |V(D)|^{1/3}-1.\]
		Because $y\notin X$, there exists some quasikernel $Q$ such that $y\in Q$, and hence $|N^+[Q]|\ge \deg^+(y)+1\ge |V(D)|^{1/3}$, proving the result.
		
		Next consider $|X|\le |V(D)|-|V(D)|^{2/3}$ and let $D'=D-X$, noting that $|V(D')|\ge |V(D)|^{2/3}$.  By the same argument as above, if there exists $y\in V(D')$ with $\deg^+_{D'}(y)\ge |V(D)|^{1/3}-1$, then any quasikernel of $D$ containing $y$ gives the desired result, so we may assume $\deg^+_{D'}(y)\le |V(D)|^{1/3}-1:=\Del$ for all $y\in V(D')$.  By \Cref{lem:greedy}(a), there exists a subset $A\sub V(D')$ such that $D'[A]=D[A]$ is acyclic with $|A|\ge \frac{|V(D')|}{\Del+1}\ge |V(D)|^{1/3}$.   \Cref{lem:acyclic} then implies the existence of quasikernel with $|N^+[Q]|\ge |A|\ge |V(D)|^{1/3}$, proving the result.
	\end{proof}
\section{Proof of \Cref{thm:disjointWeak}: Disjoint $q$-kernels}\label{sec:disjoint}
	
	Recall that $S\sub V(D)$ is an $(r-1)$-source set if $N^-(S)=\emptyset$ and if $0<|S|\le r-1$.  In this subsection, we show that digraphs without $(r-1)$-source sets contain $r$ disjoint $q$-kernels with $q$ small.  We will in fact prove something slightly stronger.
	
	\begin{defn}
		Given a vector $\beta$ of length $r$ and a digraph $D$, we say that a tuple $(Q_1,\ldots,Q_r)$ of disjoint vertex subsets of $D$ is a \textit{$\beta$-kernel} of $D$ if $Q_i$ is a $\beta_i$-kernel of $D$ for all $i$.
	\end{defn}
	For example, a (3,2)-kernel of a digraph is a pair of disjoint sets $Q_1,Q_2$ such that $Q_1$ is a 3-kernel and $Q_2$ is a 2-kernel.
	
	\begin{thm}\label{thm:disjointFull}
		Define a sequence of vectors $\beta^{(r)}$ of length $r$ by taking $\beta^{(2)}=(3,2)$, and for $r>2$ by setting
		\[\beta_i^{(r)}=\begin{cases}
			2(\beta_{i}^{(r-1)}+r-2) & i<r,\\ 
			2r-3 & i=r.
		\end{cases}\]
		If $D$ contains no $(r-1)$-source sets for $r\ge 2$, then it contains a $\beta^{(r)}$-kernel.
	\end{thm}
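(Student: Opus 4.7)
The plan is to induct on $r$. For the base case $r = 2$, I would take any quasikernel $Q_2$ of $D$ (by Chv\'atal--Lov\'asz), set $D' = D - Q_2$, and then take any quasikernel $Q_1$ of $D'$. Then $(Q_1, Q_2)$ is disjoint and independent, $Q_2$ is a $2$-kernel of $D$ by construction, and $Q_1$ is a $3$-kernel of $D$ since for $v \in V(D')$ we have $\dist(Q_1, v) \le 2$ via $D'$, while for $v \in Q_2$ the independence of $Q_2$ together with source-freeness of $D$ yields an in-neighbor $u \in V(D')$, giving $\dist(Q_1, v) \le 3$. This establishes the $\beta^{(2)} = (3,2)$-kernel.

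For the inductive step with $r \ge 3$, assume the result for $r-1$ and let $D$ have no $(r-1)$-source set. I would choose a $(2r-3)$-kernel $Q_r$ of $D$ (any quasikernel suffices, since $2r-3 \ge 2$) and then form an auxiliary digraph $\tilde D$ on vertex set $V(D) \setminus Q_r$ with an arc $uv$ whenever $uv \in E(D)$ or there exists $w \in Q_r$ with $u \to w \to v$ in $D$. Because $Q_r$ is independent, these ``shortcut'' arcs encode precisely the length-$2$ connections of $D$ routed through $Q_r$. Assuming $\tilde D$ has no $(r-2)$-source set, the inductive hypothesis produces a $\beta^{(r-1)}$-kernel $(Q_1', \ldots, Q_{r-1}')$ of $\tilde D$, and I would set $Q_i := Q_i'$ for $i < r$ together with the chosen $Q_r$.

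The distance bookkeeping is routine: every arc of $\tilde D$ corresponds to a walk of length at most $2$ in $D$, so a $\beta_i^{(r-1)}$-kernel of $\tilde D$ gives $\dist_D(Q_i, v) \le 2\beta_i^{(r-1)}$ for $v \in V(\tilde D)$, and for $v \in Q_r$ one routes via an in-neighbor in $V(\tilde D)$ (available because $Q_r$ is independent and $D$ is source-free) to get $\dist_D(Q_i, v) \le 2\beta_i^{(r-1)} + 1 \le \beta_i^{(r)} = 2\beta_i^{(r-1)} + 2(r-2)$ for $r \ge 3$. Independence of $Q_i$ in $D$ follows from independence in $\tilde D$ since $\tilde D$ has at least as many arcs as $D[V(\tilde D)]$, and disjointness from $Q_r$ is built in.

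The hard part is showing that $\tilde D$ has no $(r-2)$-source set. The naive approach fails: if some $v \in V(\tilde D)$ has all in-neighbors in $Q_r$ and each such in-neighbor $w$ has $N^-_D(w) \subseteq \{v\}$, then $v$ becomes a source of $\tilde D$, with analogous ``trapping'' configurations for larger source sets. To rule this out, $Q_r$ must be chosen carefully rather than arbitrarily. My plan would be to first build an ``anchor'' set $A \subseteq V(D)$ of size at most $r-1$ that exploits the no-$(r-1)$-source-set hypothesis (so that any configuration trapping a small set's in-neighbors within $Q_r$ would combine with $A$ to produce a forbidden source set in $D$), and then extend $A$ to a $(2r-3)$-kernel $Q_r$ via a BFS-style construction, using the slack between $2r-3$ and $2$ to absorb detours around $A$. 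The verification that $\tilde D$ has no $(r-2)$-source set would then go by contradiction: an $(r-2)$-source set $S$ of $\tilde D$, via iterated unfolding of its in-neighborhoods through $Q_r$, would yield a set in $D$ of size at most $r-1$ with empty in-neighborhood, contradicting the hypothesis on $D$. Making this construction and verification precise is the main technical challenge I anticipate.
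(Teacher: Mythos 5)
Your base case and the overall shape of the inductive step (pick $Q_r$, pass to the ``shortcut'' digraph on $V(D)\setminus Q_r$, apply induction, translate distances back) match the paper, and you have correctly located the crux: the shortcut digraph can acquire small source sets. But that crux is exactly where your argument stops. The obstructing configurations you describe are precisely what the paper calls $(r-2)$-pseudo-source sets (sets $S$ with $N^-(S)$ independent and $N^-(S\cup N^-(S))=\emptyset$): an $(r-2)$-source set of $\tilde D$ unfolds to such a set $S$ in $D$ with $N^-_D(S)\subseteq Q_r$, and this does \emph{not} contradict the absence of $(r-1)$-source sets in $D$, since $S\cup N^-_D(S)$ may be large. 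Your proposed fix --- choosing $Q_r$ carefully via an ``anchor'' set $A$ of size at most $r-1$ extended by BFS --- is only a plan, and you say so yourself; no construction of $A$ is given, no argument that the extension remains independent and remains a $(2r-3)$-kernel, and no proof that the resulting $Q_r$ dodges every $(r-2)$-pseudo-source set (of which there may be many, pairwise far apart). As written, the proof is incomplete at its most essential point.

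For comparison, the paper does not try to choose $Q_r$ cleverly. Instead it first modifies the digraph: \Cref{lem:D'} adds all arcs inside $N^-_D(S)$ for each proper $(r-2)$-pseudo-source set $S$, producing $D'$ with \emph{no} $(r-2)$-pseudo-source sets at all; \Cref{prop:noStructures} then shows that for such a $D'$ the shortcut construction works for \emph{every} independent set $Q$, so $Q_r$ can be an arbitrary 2-kernel of $D'$. The price is a distance loss of $2(r-2)-1$ when translating paths of $D'$ back to $D$ (\Cref{lem:walk} and \Cref{cor:paths}), which is exactly where the $+2(r-2)$ in the recursion $\beta_i^{(r)}=2(\beta_i^{(r-1)}+r-2)$ and the value $\beta_r^{(r)}=2r-3$ come from --- your bookkeeping, which yields $2\beta_i^{(r-1)}+1$ with no such loss, is a symptom of the fact that you are implicitly assuming the hard step rather than paying for it. If you want to pursue your route of selecting $Q_r$ instead of modifying $D$, you would need a genuinely new argument; the paper flags this general direction as nontrivial (see \Cref{sec:modest}).
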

	
	It is not difficult to determine the exact values of $\beta^{(r)}$ from \Cref{thm:disjointFull}.
	
	\begin{cor}\label{cor:disjointFull}
		If $D$ contains no $(r-1)$-source sets for $r\ge 2$, then it contains a $\beta^{(r)}$-kernel where
		\[\beta_i^{(r)}=\begin{cases}
			7\cdot 2^{r-2}-2r & i=1,\\ 
			3\cdot 2^{r-1}-2r & i=2,\\
			(4i-3) 2^{r-i}-2r & 3\le i\le r.
		\end{cases}\]
	\end{cor}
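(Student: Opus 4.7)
The plan is to treat, for each fixed index $i$, the sequence $r\mapsto \beta_i^{(r)}$ as the solution of an inhomogeneous first-order linear recurrence, and solve it in closed form. Concretely, whenever $i<r$ the definition in \Cref{thm:disjointFull} reads
\[\beta_i^{(r)}=2\beta_i^{(r-1)}+2(r-2),\]
and standard techniques yield the general form $\beta_i^{(r)}=C_i\cdot 2^r-2r$: the homogeneous solution is $C\cdot 2^r$, and a quick substitution verifies that $-2r$ is a particular solution of the inhomogeneous equation (since $2\cdot(-2(r-1))+2(r-2)=-2r$).

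It then remains to pin down $C_i$ using the correct initial value for each $i$. For $i=1$ and $i=2$ the recurrence first applies at $r=3$, and the base case $\beta^{(2)}=(3,2)$ supplies $\beta_1^{(2)}=3$ and $\beta_2^{(2)}=2$; solving the resulting linear equations for $C_1$ and $C_2$ yields $C_1=7/4$ and $C_2=3/2$, giving the stated formulas $\beta_1^{(r)}=7\cdot 2^{r-2}-2r$ and $\beta_2^{(r)}=3\cdot 2^{r-1}-2r$. For $i\ge 3$ the index $i$ first appears at $r=i$, where the $i=r$ clause of \Cref{thm:disjointFull} sets $\beta_i^{(i)}=2i-3$; a single linear equation then fixes $C_i=(4i-3)/2^i$, and hence $\beta_i^{(r)}=(4i-3)2^{r-i}-2r$ for $r\ge i\ge 3$. (Setting $i=r$ recovers $\beta_r^{(r)}=2r-3$, confirming internal consistency.)

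There is no real obstacle here, since this is a textbook solve of a first-order inhomogeneous linear recurrence. The only care required is bookkeeping: the three formulas must be applied over disjoint index ranges ($i=1$, $i=2$, and $3\le i\le r$), and the correct starting value must be used in each regime. Equivalently, one may package the entire argument as a single induction on $r$ whose inductive step reduces to the one-line algebraic identity $2(C_i\cdot 2^{r-1}-2(r-1))+2(r-2)=C_i\cdot 2^r-2r$.
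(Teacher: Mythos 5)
Your proposal is correct and matches the paper's (very brief) argument: the paper simply notes that the corollary follows by checking that the closed-form sequence satisfies the same initial conditions and recurrence as in \Cref{thm:disjointFull}, which is exactly the verification $2(C_i\cdot 2^{r-1}-2(r-1))+2(r-2)=C_i\cdot 2^r-2r$ you carry out, together with the correct base values $\beta_1^{(2)}=3$, $\beta_2^{(2)}=2$, and $\beta_i^{(i)}=2i-3$ for $i\ge 3$. Your derivation of the closed form via the inhomogeneous recurrence is just a constructive version of the same check.
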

	Indeed, this follows by checking for each $i$ that the sequence $\beta_i^{(r)}$ in \Cref{thm:disjointFull} satisfies the same initial condition and recurrence relation as the sequence in \Cref{cor:disjointFull}.  From this our main result for disjoint $q$-kernels is easy to derive.
	
	\begin{proof}[Proof of \Cref{thm:disjointWeak} assuming \Cref{cor:disjointFull}]
		Recall that we wish to show that every digraph without $(r-1)$-sources contains $r$ disjoint $2^{r+1}$-kernels.  By \Cref{cor:disjointFull} we know there exist $r$ disjoint sets which are each $\beta_i^{(r)}$-kernels.  It is straightforward to show that $\beta_i^{(r)}\le 2^{r+1}$ for all $i$ (for example, by noting that $4i-3\le 2^{i+1}$ for  $i\ge 3$), giving the result.
	\end{proof}
	
	It remains to prove \Cref{thm:disjointFull}.  We begin by proving a slight strengthening of the  $r=2$ case which will serve as a warmup for the general approach.

	\begin{lem}\label{lem:2Disjoint}
		If $D$ is a source-free digraph, then for every independent set $Q\sub V(D)$, there exists a 3-kernel $Q'$ of $D$ which is disjoint from $Q$.
	\end{lem}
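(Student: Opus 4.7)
The plan is to localize to the subdigraph $D[T]$ on $T = V(D)\setminus Q$ and let $Q'$ be any quasikernel of $D[T]$, which exists by the theorem of Chv\'atal and Lov\'asz. Disjointness $Q' \cap Q = \emptyset$ is automatic from $Q' \subseteq T$, and independence of $Q'$ in $D$ follows from independence in $D[T]$ since all arcs between vertices of $T$ are arcs of $D$.

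The only nontrivial step is upgrading the quasikernel property of $Q'$ in $D[T]$ to a $3$-kernel property in $D$. I would handle the two obvious cases separately. For any $w \in T$, the inequality $\dist_D(Q',w) \le \dist_{D[T]}(Q',w) \le 2$ is immediate. For $v \in Q$, I would invoke the source-free hypothesis: $v$ has some in-neighbor $u$ in $D$, and since $Q$ is independent we must have $u \in T$. Then $\dist_D(Q',u) \le 2$, and appending the arc $uv$ yields $\dist_D(Q',v) \le 3$.

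The main conceptual point—and the reason the result is stated for $3$-kernels rather than quasikernels—is precisely the ``extra step'' used to reach vertices of $Q$ through an in-neighbor in $T$. If we tried to strengthen the conclusion to a disjoint quasikernel, the argument would break at exactly this point, and indeed no such strengthening is possible in general by the Gutin--Koh--Tay--Yeo examples cited earlier. Since this lemma is the $r=2$ base case of \Cref{thm:disjointFull}, I would expect the full inductive argument to iterate the same idea: delete the previously constructed kernels from $D$, use the $(r-1)$-source set hypothesis to ensure the remaining digraph is still ``rich enough'' to contain the next kernel, and pay a controlled number of extra steps of distance to reach back from the new kernel through the deleted vertices—which is consistent with the doubling that appears in the recurrence defining $\beta^{(r)}$.
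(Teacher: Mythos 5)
Your proof is correct and is essentially identical to the paper's: both take a quasikernel of $D-Q$ and use the source-free hypothesis plus independence of $Q$ to reach each vertex of $Q$ through an in-neighbor in $V(D)\setminus Q$, paying one extra step. No issues.
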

	Note that this together with the fact that every digraph contains a quasikernel implies that every source-free digraph contains a (3,2)-kernel.  This implies the $r=2$ case of \Cref{thm:disjointFull} as well as \Cref{prop:disjoint3}.
	\begin{proof}
		Define $D'=D-Q$ and let $Q'$ be any 2-kernel of $D'$.  Because $D$ is source-free and $Q$ is independent, every vertex of $Q$ has an in-neighbor in $V(D)\sm Q$. Since $Q'$ can reach every vertex of $V(D)\sm Q$ with a path of length at most 2 by construction, $Q'$ can reach every vertex of $V(D)$ with a path of length at most 3.  Since $Q'$ is independent by construction, we conclude that $Q'$ is a 3-kernel, giving the result.
	\end{proof}
	We would like to adapt the approach above to find 3 disjoint $q$-kernels in digraphs which contain no 2-source sets.  Specifically, we would like to prove that given any independent set $Q$, there always exist two disjoint $q$-kernels in $V(D)\sm Q$.
	
	Unfortunately this approach will not work.  Indeed, consider the digraph $D_s$ with vertex set $\{u,v_1,\ldots,v_s\}$ which consists of the arcs $uv_i$ and $v_i u$ for all $i$ (equivalently, $D_s$ is a star graph where each edge is replaced by a directed 2-cycle).  It is not difficult to check that  $D_s$ contains no 2-source sets if $s\ge 2$, but there exists an independent set $Q=\{v_1,\ldots,v_s\}$ such that $V(D_s)\sm Q=\{u\}$ does not contain two disjoint $q$-kernels.
	
	Roughly speaking, the  issue with the previous example is that, although $D_s$ contains no 2-source sets for $s\ge 2$, there exists an independent set $Q$ such that $D_s-Q$ contains sources, and hence there is no hope in finding two disjoint $q$-kernels in $D_s-Q$.  More generally, one can show that the following condition is an obstruction to the approach of \Cref{lem:2Disjoint}.

	\begin{defn}
		Given a digraph $D$, we say that a non-empty set of vertices $S$ is a \textit{pseduo-source set} if $N^-(S)$ is an independent set and if $S\cup N^-(S)$ is a source set (i.e.\ $N^-(S\cup N^-(S))=\emptyset$).  Equivalently, $S$ is a pseudo-source set if every $v\in N^-(S)$ has $N^-(v)\sub S$.  We say that such a set of vertices $S$ is an \textit{$s$-pseudo-source set} if $S$ is non-empty with $|S|\le s$.
	\end{defn}
	
	For example, every source set is a pseudo-source set (since in this case $N^-(S)=\emptyset$), and in particular digraphs without 1-pseudo-source sets are source-free.  More generally, one can check that a digraph $D$ contains a 1-pseudo-source set if and only if it contains an induced copy of $D_s$ for some $s\ge 0$ which has no incoming arcs from the rest of the digraph (noting in particular that the $s=0$ case is equivalent to $D$ containing a source).  
	
	It turns out that pseudo-source sets are the only obstruction to the approach of \Cref{lem:2Disjoint} going through.
	
	\begin{prop}\label{prop:noStructures}
		Let $\beta$ be a vector of length $r-1\ge 2$ such that every digraph $D'$ without $(r-2)$-source sets contains a $\beta$-kernel.  If $D$ is a digraph which contains no $(r-2)$-pseudo-source sets, then for every independent set $Q\sub V(D)$, there exist disjoint sets $Q_1,\ldots,Q_{r-1}$ which are disjoint from $Q$ such that each $Q_i$ is a $(2\beta_{i}+1)$-kernel for $D$.
	\end{prop}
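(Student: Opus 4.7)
The plan is to reduce to the inductive hypothesis by constructing an auxiliary digraph $D^*$ on vertex set $V(D)\setminus Q$ whose arc set consists of (i) all arcs of $D$ with both endpoints outside $Q$, together with (ii) a ``shortcut'' arc $uw$ for every pair $u,w\in V(D)\setminus Q$ such that $uv,vw\in E(D)$ for some $v\in Q$.  If I can show that $D^*$ has no $(r-2)$-source sets, the hypothesis on $\beta$ will produce a $\beta$-kernel $(Q_1,\ldots,Q_{r-1})$ of $D^*$, and then I will argue these very sets form the desired $(2\beta_i+1)$-kernels of $D$ disjoint from $Q$.

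The key (and main) step is the claim that $D^*$ is free of $(r-2)$-source sets.  Suppose for contradiction some non-empty $S\subseteq V(D^*)$ with $|S|\le r-2$ satisfies $N^-_{D^*}(S)=\emptyset$; I will verify that $S$ is then a pseudo-source set of $D$, using the equivalent criterion that every $v\in N^-_D(S)$ satisfies $N^-_D(v)\subseteq S$.  First, any in-neighbor of $S$ in $D$ from outside $Q$ would provide an original arc into $S$ in $D^*$, so $N^-_D(S)\subseteq Q$.  Next, fix $v\in N^-_D(S)\subseteq Q$ and $u\in N^-_D(v)$: independence of $Q$ forces $u\notin Q$, and if $u\notin S$ then picking any $s\in S$ with $vs\in E(D)$ yields the length-two path $u\to v\to s$ through $v\in Q$, which installs a shortcut arc $u\to s$ in $D^*$ and contradicts $N^-_{D^*}(S)=\emptyset$.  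Hence $N^-_D(v)\subseteq S$, making $S$ a pseudo-source set of $D$ of size at most $r-2$, contradicting the hypothesis on $D$.

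Granted this, the hypothesis on $\beta$ hands us the desired $(Q_1,\ldots,Q_{r-1})$ in $D^*$; they are pairwise disjoint and disjoint from $Q$ by construction.  Each $Q_i$ remains independent in $D$, since every $D$-arc between two vertices of $V(D)\setminus Q$ is already an arc of $D^*$.  For the distance condition, take any $v\in V(D)$.  If $v\in V(D)\setminus Q$, then a $D^*$-path of length at most $\beta_i$ from $Q_i$ to $v$ translates into a $D$-walk of length at most $2\beta_i$, because each $D^*$-arc corresponds either to a single $D$-arc or to a length-two $D$-path through $Q$.  If instead $v\in Q$, note that $r\ge 3$ means the hypothesis forbids $1$-pseudo-source sets, which include isolated sources, so $D$ is source-free; combined with the independence of $Q$, this gives $v$ an in-neighbor $u\in V(D)\setminus Q$, whence $\dist_D(Q_i,v)\le \dist_D(Q_i,u)+1\le 2\beta_i+1$.

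The only place where real work occurs is the reduction from $(r-2)$-source sets of $D^*$ to $(r-2)$-pseudo-source sets of $D$; the shortcut construction is designed precisely to make this equivalence tight, and once it is established, the distance bookkeeping and the disjointness/independence verifications are routine.
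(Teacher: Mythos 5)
Your construction of $D^*$ (arcs of $D$ outside $Q$ plus length-two shortcuts through $Q$), the reduction of $(r-2)$-source sets of $D^*$ to $(r-2)$-pseudo-source sets of $D$, and the final distance bookkeeping are exactly the paper's argument. The proof is correct and essentially identical to the one given in the paper.
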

	\begin{proof}
		Define a digraph $D'$ on $V(D)\sm Q$ by including the arc $uw$ if either $uw\in E(D)$ or if there exists a vertex $v\in Q$ such that $uv,vw\in E(D)$.
		\begin{claim}
			$D'$ contains no $(r-2)$-source sets.
		\end{claim}
		\begin{proof}
			Assume for contradiction that there exists a non-empty set $S\sub V(D')=V(D)\sm Q$ with $|S|\le r-2$ and $N^-_{D'}(S)=\emptyset$.  Our aim is to show that $S$ is an $(r-2)$-pseudo-source set in $D$, contradicting our hypothesis on $D$.  In particular, it suffices to show that each $v\in N^-_D(S)$ has $N^-_D(v)\sub S$.
			
			Consider an arbitrary $v\in N^-_{D}(S)$ (if such a vertex exists), say with $vw\in E(D)$ and $w\in S$.  If $v\in V(D)\sm Q$, then $vw\in E(D')$ by definition of $D'$ (since $w\in S\sub V(D)\sm Q$), contradicting $N^-_{D'}(S)=\emptyset$.  Thus we can assume $v\in Q$.  Note then that every $u\in N^-_D(v)$ satisfies $u\notin Q$ (since $v\in Q$ and $Q$ is independent) and $u\notin V(D')\sm S=V(D)\sm(S\cup Q)$ (since otherwise $v\in Q$ would imply $uw\in E(D')$ by definition of $D'$, contradicting $N_{D'}^-(S)=\emptyset$), so we must have $u\in S$.  Since $u\in N_D^-(v)$ was arbitrary we conclude $N_D^-(v)\sub S$, completing our contradiction for $S$ being an $(r-2)$-pseudo-source set.
		\end{proof}
		By hypothesis and the claim above, there exists a $\beta$-kernel $(Q_1,\ldots,Q_{r-1})$ of $D'$.  We claim that each $Q_i$ is a $(2\beta_{i}+1)$-kernel of $D$, which will complete the proof.  
		
		Indeed, fix some $i$ and first consider an arbitrary vertex $w\in V(D')$.  By definition of $Q_i$, there exists some $u\in Q_i$ and a directed walk from $u$ to $w$ of length at most $\beta_{i}$ in $D'$.  By definition of $D'$, each of these walks in $D'$ can be extended to a walk of length at most $2\beta_{i}$ in $D$. This gives the right distance condition  for vertices in $V(D')=V(D)\sm Q$, so it remains to show the result for $w\in Q$. 
		
		Because $D$ is source-free (since it contains no $(r-1)$-pseudo-source sets) and $Q$ is an independent set, every $w\in Q$ has some in-neighbor $v\in V(D)\sm Q=V(D')$.  By our previous analysis, we know $v$ can be reached from some $u\in Q_i$ by a walk of length at most $2\beta_{i}$, which means $w$ can be reached by a walk of length at most $2\beta_{i}+1$.  We conclude that $Q_i$ is a $(2\beta_{i}+1)$-kernel, proving the result.
	\end{proof}
	
	In order to use \Cref{prop:noStructures} towards proving \Cref{thm:disjointFull}, we must transform digraphs which avoid $(r-1)$-source sets into digraphs which avoid $(r-2)$-pseudo-source sets.  This is accomplished through the following lemma, where here we say that a pseudo-source set $S$ is \textit{proper} if there does not exist a non-empty set $T\subsetneq S$ which is a pseudo-source set. 
	
	\begin{lem}\label{lem:D'}
		Let $r\ge 3$ be an integer and $D$ a digraph without $(r-1)$-source sets.  Let $D'$ be the digraph obtained from $D$ by taking each proper $(r-2)$-pseudo-source set $S$ of $D$ and adding all possible arcs between the vertices of $N_D^-(S)$.  Then $D'$ contains no $(r-2)$-pseudo-source sets.
	\end{lem}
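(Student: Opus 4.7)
The plan is to argue by contradiction. Suppose $S'\sub V(D')$ is an $(r-2)$-pseudo-source set of $D'$, and set $T:=S'\cup N_{D'}^-(S')$.

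First I would check the easy fact that $T$ is already a source set of $D$. Since $E(D)\sub E(D')$, we have $N_D^-(T)\sub N_{D'}^-(T)=\emptyset$, so the hypothesis that $D$ has no $(r-1)$-source sets gives $|T|\ge r$ and hence $|N_{D'}^-(S')|\ge r-|S'|\ge 2$.

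The heart of the argument is then to upgrade this to the stronger statement that $S'$ itself is a pseudo-source set of $D$. Independence of $N_D^-(S')$ in $D$ is automatic, since it sits inside $N_{D'}^-(S')$, which is independent in $D'\supseteq D$. To prove $S'\cup N_D^-(S')$ is a source set of $D$, suppose instead that some arc $vu\in E(D)$ has $v\notin S'\cup N_D^-(S')$ and $u\in S'\cup N_D^-(S')$. If $u\in S'$ then $v\in N_D^-(S')$ by definition, contradiction; so $u\in N_D^-(S')\sub N_{D'}^-(S')\sub T$. Because $T$ is a source set of $D$, the arc $v\to u$ forces $v\in T$, and since $v\notin S'$ we conclude $v\in N_{D'}^-(S')$. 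Then $v$ and $u$ are two distinct vertices of $N_{D'}^-(S')$ joined by an arc of $D\sub D'$, contradicting independence of $N_{D'}^-(S')$ in $D'$.

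With $S'$ now an $(r-2)$-pseudo-source set of $D$, the hypothesis forces $|N_D^-(S')|\ge r-|S'|\ge 2$, and I split on whether $S'$ is proper. If $S'$ is a proper pseudo-source set of $D$, the construction added every arc inside $N_D^-(S')$, contradicting independence of $N_{D'}^-(S')\supseteq N_D^-(S')$ in $D'$. Otherwise I pick a minimum-size nonempty pseudo-source set $S_0\subsetneq S'$ of $D$; minimality makes $S_0$ a proper $(r-2)$-pseudo-source set of $D$ with $|S_0|\le |S'|-1\le r-3$. Combining $N_D^-(S_0)\sub T\sm S_0$ with the crude bounds $|N_D^-(S_0)\cap S'|\le |S'|-|S_0|$ and $|N_D^-(S_0)|\ge r-|S_0|$ yields
\[|N_D^-(S_0)\cap N_{D'}^-(S')|\ge r-|S'|\ge 2,\]
so two vertices of $N_{D'}^-(S')$ are joined by an arc of $D'$ added by the construction, once again contradicting independence.

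The main obstacle is the middle step: showing $S'$ must already be a pseudo-source set of $D$. This is where the independence of $N_{D'}^-(S')$ in $D'$ has to be used in a subtle way to rule out ``new'' in-neighbors of $N_D^-(S')$ in $D$. Once that reduction is in place, both remaining cases reduce to pigeonhole: whenever the construction completes some $N_D^-(S_0)$ into a fully-arced set, at least two of its vertices must land inside the independent set $N_{D'}^-(S')$.
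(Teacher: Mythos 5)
Your proof is correct, and its skeleton matches the paper's: show that $S'$ remains a pseudo-source set of $D$, use the absence of $(r-1)$-source sets to get $|S\cup N_D^-(S)|\ge r$ for every non-empty pseudo-source set $S$ of $D$, and pass to a proper pseudo-source subset $S_0\sub S'$ so that the arcs the construction added inside $N_D^-(S_0)$ can be brought to bear. The only real divergence is the endgame: the paper uses the half of the pseudo-source condition saying $N_{D'}^-(w)\sub S'$ for $w\in N_{D'}^-(S')$ to force all but at most one vertex of $S_0\cup N_D^-(S_0)$ into $S'$, so the contradiction is $|S'|\ge r-1$; you instead use the other half (independence of $N_{D'}^-(S')$ in $D'$) together with a pigeonhole count that places two arc-joined vertices of $N_D^-(S_0)$ inside $N_{D'}^-(S')$. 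Both endgames are equally short. Incidentally, your Case A is just Case B with $S_0=S'$ (the count $|N_D^-(S_0)\cap N_{D'}^-(S')|\ge r-|S'|\ge 2$ goes through verbatim when $S_0=S'$), so the case split could be dropped.
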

	
	\begin{proof}
		Assume for contradiction that there exists an $(r-2)$-pseudo-source set $S'$ of $D'$.  
		
		\begin{claim}
			$S'$ is an $(r-2)$-pseudo-source set of $D$.
		\end{claim}
		\begin{proof}
			If this is not the case, then by definition there exists $v\in N^-_D(S')$ with $N^-_D(v)\not\sub S'$.  This implies  $v\in N^-_{D'}(S')$ and $N^-_{D'}(v)\not \sub S'$ (because $D'$ only add arcs to $D$).  This implies  $S'$ is not an $(r-2)$-pseudo-source set of $D'$, a contradiction.
		\end{proof}
		\begin{claim}\label{cl:leastR}
			For any non-empty pseudo-source set $S$ of $D$, the set $S\cup N^-_D(S)$ contains at least $r$ vertices.
		\end{claim}
		\begin{proof}
			Observe that 
			\[N^-_D(S\cup N^-_D(S))\sub N^-_D(S)\cup N^-_D(N^-_D(S))\sub N^-_D(S)\cup S,\]
			where this last step uses that  $N^-_D(N^-_D(S))\sub S$ since $S$ is a pseudo-source set by hypothesis.  The expression above implies $S\cup N^-_D(S)$ is a source set of $D$, and since $D$ contains no $(r-1)$-source sets, this set must either be empty or have size at least $r$.  The set can not be empty since in particular $S\ne \emptyset$, so we conclude the set has size at least $r$ as desired.
			
		\end{proof}
		Let $S\sub S'$ be a proper $(r-2)$-pseudo-source set of $D$.
		\begin{claim}\label{cl:mostOne}
			There exists at most one vertex $w\in N_D^-(S)$ which is not in $S'$.
		\end{claim}
		\begin{proof}
			If there exists no vertex with this property then we are done, so we may  assume there exists a vertex $w$ with this property.  Note that $w\in N_{D'}^-(S')$ since, by hypothesis, $w\notin S'$ and $w$ has an arc in $D\sub D'$ to some vertex in $S\sub S'$.  Thus 
			\[(N^-_D(S)\sm \{w\})\sub N^-_{D'}(w)\sub S',\]
			where the first inclusion uses the definition of $D'$, and the second uses that $S'$ is a pseudo-source set of $D'$ and $w\in N_{D'}^-(S)$.  This proves the claim.
		\end{proof}
		
		Putting these last two claims together, we see that $S'$ contains at least $r-1$ vertices (namely $S\cup N^-_D(S)\sm \{w\}\sub S'$ since $S\sub S'$ by definition), contradicting $S'$ being an $(r-2)$-pseudo-source set.  We conclude the result.
		
	\end{proof}
	
	To use \Cref{lem:D'}, we will need to understand how much the diameters of $D$ and $D'$ differ from each other.  For this, we say a set of vertices $C$ of a digraph $D$ is \textit{strongly connected} if for all $u,v\in C$ there exists a directed path from $u$ to $v$, and we say $C$ is a \textit{strongly connected component} if $C$ is a maximal set with this property.  We define the \textit{diameter} of a strongly connected set $C$ to be $\max_{u,v\in C} \dist_D(u,v)$, and we note that for strongly connected components this quantity is the same as $\max_{u,v\in C} \dist_{D[C]}(u,v)$ (since no vertex $w$ outside of $C$ can ever be used in a shortest path from $u$ to $v$, as otherwise $C\cup \{w\}$ would also be strongly connected).
	
	\begin{lem}\label{lem:walk}
		If $S$ is a proper pseudo-source set of a source-free digraph $D$, then $S\cup N^-(S)$ is a strongly connected component of $D$ and has diameter at most $2|S|$.
	\end{lem}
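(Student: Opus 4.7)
The plan is to let $T = S \cup N^-(S)$ and establish both conclusions by exploiting the properness of $S$ together with the observation (already used in Claim~\ref{cl:leastR}) that $T$ is itself a source set of $D$, so $N^-_D(v) \sub T$ for every $v \in T$. Since $D$ is source-free, this in turn implies that $D[T]$ has no sources, which is the key structural feature I will use repeatedly.

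For the strongly connected claim, I would pick a source strongly connected component $C$ of $D[T]$ (that is, an SCC of $D[T]$ with no incoming arcs from $T \sm C$) and argue $C = T$. First, $C$ cannot be contained in $N^-(S)$: since $D[T]$ has no sources, $C$ must have size at least two and so host some internal arc, but $N^-(S)$ is independent; hence $C \cap S \ne \emptyset$. I would then verify that $C \cap S$ is itself a pseudo-source set of $D$. Any $v \in N^-_D(C \cap S)$ must lie in $C \cap N^-(S)$: vertices outside $T$ are ruled out because $T$ is a source set of $D$, and vertices in $T \sm C$ are ruled out by the source-SCC property of $C$. This already puts $N^-_D(C \cap S)$ inside the independent set $N^-(S)$, and for each such $v$ the containment $N^-_D(v) \sub S$ (from $S$ being pseudo-source) together with $N^-_D(v) \sub C$ (from $v \in C$ and the source-SCC property again) yields $N^-_D(v) \sub C \cap S$. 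Properness of $S$ then forces $C \cap S = S$, and then every $w \in N^-(S)$ has an arc into $S \sub C$ and so must itself lie in $C$, giving $T \sub C$ and hence $C = T$. Maximality as an SCC of $D$ is automatic because $T$ is a source set.

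For the diameter bound I would pass to the auxiliary digraph $D_S$ on vertex set $S$ with an arc $u \to v$ whenever $uv \in E(D)$ or there exists $w \in N^-(S)$ with $uw, wv \in E(D)$. Because $N^-(S)$ is independent, any directed path in $D[T]$ between two vertices of $S$ decomposes into segments of length one or two, each being an arc of $D_S$, so such a path yields one in $D_S$ of at most half the length. Combined with $D[T]$ being strongly connected (already established), this makes $D_S$ strongly connected, so its diameter is at most $|S| - 1$, and any pair in $S$ is at distance at most $2(|S| - 1)$ in $D[T]$. Endpoints in $N^-(S)$ cost at most one extra step on each side: each $w \in N^-(S)$ has an out-neighbor in $S$ (by definition of $N^-(S)$) and an in-neighbor in $S$ (since $N^-_D(w) \sub S$ is non-empty because $D$ is source-free). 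Adding one step on each side when necessary gives the overall bound $2|S|$, and since $T$ is an SCC of $D$ the distance in $D$ agrees with the distance in $D[T]$.

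The main obstacle I anticipate is the pseudo-source verification for $C \cap S$: it requires invoking three distinct properties simultaneously (source-SCC for $C$ in $D[T]$, source-set for $T$ in $D$, and pseudo-source for $S$ in $D$) in order to rule out any unexpected in-neighbor. Once that step is clean, properness of $S$ collapses $C$ onto $T$ and the diameter bound is essentially bookkeeping over the bipartite-like structure of $T$.
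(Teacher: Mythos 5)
Your proof is correct. It shares the paper's key mechanism for strong connectivity --- produce a non-empty pseudo-source subset of $S$ and let properness force it to be all of $S$ --- but realizes it differently: the paper fixes an arbitrary target $v\in S\cup N^-(S)$ and applies the mechanism to the set of vertices that can reach $v$ (one application per target vertex), while you apply it once to a source strongly connected component $C$ of $D[S\cup N^-(S)]$. Both sets are closed under taking in-neighbors, which is all the pseudo-source verification needs, so the two arguments are essentially interchangeable; yours has the minor advantage of a single application of properness, at the cost of first checking that $C$ meets $S$ (your size-two observation is correct, since every in-neighbor of a vertex of $C$ lies in $C$, $D$ is source-free, and $N^-(S)$ is independent, though this step deserves to be spelled out). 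The diameter arguments genuinely diverge: the paper simply notes that in any path inside $S\cup N^-(S)$ each consecutive pair of vertices meets $S$ (because $N^-(v)\subseteq S$ for $v\in N^-(S)$), so every path --- in particular every shortest path --- has length at most $2|S|$; you instead contract through $N^-(S)$ to an auxiliary digraph $D_S$ on vertex set $S$, transfer strong connectivity, invoke the generic diameter bound $|S|-1$ for strongly connected digraphs, and pay one extra step at each endpoint lying in $N^-(S)$. The paper's direct count is shorter and needs no auxiliary object, but your contraction makes the alternating structure of $S\cup N^-(S)$ explicit and arrives at the same bound $2(|S|-1)+2=2|S|$.
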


	This diameter bound is best possible: if $D$ is obtained from a path graph $u_0u_1\cdots u_{2r-1}u_{2r}$ by including both arcs for each edge, then $S=\{u_1,u_3,\ldots, u_{2r-1}\}$ is a proper pseudo-source set and $S\cup N^-(S)=V(D)$ has diameter exactly $2r$.  
	
	\begin{proof}
		We begin with the following.
		\begin{claim}\label{cl:scc}
			For any two vertices $u,v\in S\cup N^-(S)$, there exists a directed path in $D$ from $u$ to $v$.
		\end{claim}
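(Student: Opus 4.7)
The plan is to establish the stronger statement that $S \cup N^-(S)$ is a single strongly connected component (SCC) of $D$, from which the required directed path is immediate. The starting observation is that since $S$ is a pseudo-source set, $N^-(S \cup N^-(S)) = \emptyset$, so no arc enters $S \cup N^-(S)$ from outside; consequently every SCC of $D$ that meets this set lies entirely inside it, and $S \cup N^-(S)$ is a disjoint union of SCCs.

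Within the condensation of $D$, I would consider the sub-DAG induced on the SCCs contained in $S \cup N^-(S)$, and let $C$ be a source component of this sub-DAG. My aim is to show that $T := C \cap S$ is a non-empty pseudo-source set of $D$; properness of $S$ then forces $T = S$, hence $S \subseteq C$. Then since every $v \in N^-(S)$ has an out-arc landing in $S \subseteq C$ while $C$ admits no incoming arcs from any other component in $S \cup N^-(S)$ or from outside, also $N^-(S) \subseteq C$. Thus $C = S \cup N^-(S)$ is a single SCC.

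The pseudo-source verification for $T$ uses three observations. First, $T$ is non-empty: if $C \subseteq N^-(S)$, then independence of $N^-(S)$ leaves $C$ with no internal arcs, and the source-SCC property leaves $C$ with no in-arcs at all, making its vertices sources of $D$ and contradicting source-freeness. Second, all in-arcs to $T$ come from $C$ (by the source-SCC property), so $N^-(T) \subseteq C \setminus T = C \cap N^-(S) \subseteq N^-(S)$, which is independent. Third, for any $v \in N^-(T) \subseteq N^-(S)$, the pseudo-source property of $S$ forces $N^-_D(v) \subseteq S$, and the source-SCC property forces these in-neighbors into $C$, so $N^-_D(v) \subseteq C \cap S = T$; combined with the trivial containment for $v \in T$, this yields $N^-(T \cup N^-(T)) = \emptyset$.

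The hardest part is the bookkeeping in verifying the three pseudo-source conditions for $T$; the leverage comes from combining the source-SCC property of $C$ (which pushes in-neighbors back into $C$) with the pseudo-source conditions on $S$ (which push certain in-neighbors into $S$), so that their intersection $T$ inherits exactly the closure needed to invoke properness of $S$.
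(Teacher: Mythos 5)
Your proof is correct and is essentially the paper's argument in different packaging: the paper fixes a target vertex $v$ and applies the identical ``intersect $S$ with a backward-closed set, verify it is a pseudo-source set, invoke properness'' move to the set $T$ of vertices that can reach $v$, whereas you apply it to a source component $C$ of the condensation. The verification uses the same two levers in both cases (in-neighbors of the backward-closed set stay inside it; in-neighbors of vertices in $N^-(S)$ lie in $S$), so the two write-ups are interchangeable.
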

		\begin{proof}
			Let $v\in S\cup N^-(S)$ be arbitrary, and let $T\sub S\cup N^-(S)$ denote the set of vertices $u$ such that there exists a directed path from $u$ to $v$ in $D$, noting that we trivially include $v\in T$.  We aim to show that $S\cap T$ is a pseudo-source set. 
			
			Showing $S\cap T\ne \emptyset$ is trivial if $v\in S$.   If instead $v\in N^-(S)$, then $N^-(v)\sub S$ since $S$ is a pseudo-source set.  We also have $N^-(v)\sub T$ by definition of $T$, so $N^-(v)\sub S\cap T$.  Moreover, $N^-(v)\ne \emptyset$ since $D$ is source-free, proving that $S\cap T\ne \emptyset$.
			
			We next show that every $u\in N^-(S\cap T)$ has $N^-(u)\sub S\cap T$.   If $u\in N^-(S\cap T)$ then, since $u$ has an arc to a vertex of $T$, we must have $u\in T$ by definition of $T$, and similarly we must have $N^-(u)\sub T$.  Because $u\in T$ and $u\notin S\cap T$ by definition of $u\in N^-(S\cap T)$, we must have $u\notin S$, and as such $u$ having an arc to $S$ implies that $u\in N^-(S)$, so $N^-(u)\sub S$ by definition of $S$ being a pseudo-source set.  In total we find $N^-(u)\sub S\cap T$ for all $u\in N^-(S\cap T)$, which combined with our observation $S\cap T\ne \emptyset$  shows that $S\cap T$ is a pseudo-source set.
			
			Because $S\cap T$ is a  pseudo-source set and $S$ is proper, we must have $S\cap T=S$, i.e.\ $S\sub T$.  By definition of $T$ this implies $S\cup N^-(S)\sub T$, and hence $T=S\cup N^-(S)$.  This implies every vertex $u$ of $S\cup N^-(S)$ can reach the (arbitrary) vertex $v\in S\cup N^-(S)$, proving the claim.
		\end{proof}
		Observe that no vertex $u\notin S\cup N^-(S)$ has an arc to $S\cup N^-(S)$ by definition of $S$ being a pseudo-source set.  This together with the claim above implies $S\cup N^-(S)$ is a strongly connected component.

		To prove the diameter condition, consider any path  $x_0x_1\cdots x_\ell$ in $D[S\cup N^-(S)]$.  Note that for all $i$, at least one of the vertices $x_i,x_{i+1}$ must lie in $S$ since $N^-(v)\sub S$ for all $v\in N^-(S)$, and this implies $\ell\le 2|S|$.
	\end{proof}

	This gives the following.
	\begin{lem}\label{cor:paths}
		Let $r,\ D$, and $D'$ be as in \Cref{lem:D'}.  If $x_0\cdots x_\ell$ is a directed path in $D'$, then there exists a directed path from $x_0$ to $x_\ell$ in $D$ of length at most $\ell+2(r-2)-1$.
	\end{lem}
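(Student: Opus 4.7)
The plan is to exploit two structural facts about the construction of $D'$. First, every new arc (an arc in $E(D') \sm E(D)$) has both its endpoints inside $C_S := S \cup N^-_D(S)$ for some proper $(r-2)$-pseudo-source set $S$, directly from the definition of $D'$. Second, \Cref{lem:walk} guarantees that each $C_S$ is a strongly connected component of $D$ of diameter at most $2|S| \le 2(r-2)$, and moreover has $N_D^-(C_S) = \emptyset$ because $S$ is a pseudo-source set. Also note that $D$ is source-free since $r\ge 3$ forbids $1$-source sets, so \Cref{lem:walk} applies.

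With these in hand, I would argue that no arc of $D'$ enters $C_S$ from outside: old arcs are ruled out by $N_D^-(C_S) = \emptyset$, and a new arc has both endpoints in $C_{S'}$ for some proper $(r-2)$-pseudo-source $S'$, so since strongly connected components of $D$ are either equal or disjoint, either $C_{S'} = C_S$ (both endpoints already inside $C_S$) or the arc sits in a component disjoint from $C_S$. It follows that along the path $x_0\cdots x_\ell$, every pseudo-source SCC is visited in one contiguous segment; once such an SCC is exited, no further pseudo-source SCC can be entered; and so if any new arc is used, all new arcs lie inside a single $C_S$ which must contain $x_0$. In particular, there is a largest index $b$ with $x_0,\ldots,x_b \in C_S$, and the suffix $x_b \cdots x_\ell$ uses only arcs of $D$.

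To finish, I would replace the prefix $x_0\cdots x_b$ by a shortest $x_0$-to-$x_b$ directed walk in $D[C_S]$, of length at most $2(r-2)$ by \Cref{lem:walk}, and concatenate it with the suffix; contracting any repeated vertices yields an $x_0$-to-$x_\ell$ directed path in $D$ of length at most $2(r-2) + (\ell - b)$. If $b \ge 1$ this is at most $\ell + 2(r-2) - 1$ as required. The remaining case $b = 0$ means only $x_0$ lies in $C_S$, so no new arc can be present on the path (a new arc would need both endpoints in the same $C_{S'}$, impossible once $x_1 \notin C_S$ and $x_1$ lies outside every pseudo-source SCC by the non-entry property), and therefore the original path lies entirely in $D$, with length $\ell \le \ell + 2(r-2) - 1$. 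The only delicate point is verifying that new arcs cannot serve as bridges between distinct pseudo-source components, and this follows cleanly from the disjointness of SCCs in $D$ together with the fact that every new arc sits inside an SCC.
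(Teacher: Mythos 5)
Your proposal is correct and follows essentially the same route as the paper: you prove the same key "non-entry" claim that $N_{D'}^-(S\cup N_D^-(S))=\emptyset$ via the strongly-connected-component dichotomy from \Cref{lem:walk}, deduce that the path splits into a prefix inside a single pseudo-source component containing $x_0$ followed by a suffix lying entirely in $D$, and then replace the prefix by a walk of length at most $2(r-2)$. Your index $b$ is exactly the paper's $i-1$, and the edge cases ($b=0$, or $x_0$ in no pseudo-source component) are handled the same way.
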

	\begin{proof}
		This proof relies on two claims.
		\begin{claim}
			If $S$ is a proper pseudo-source set of $D$, then $N_{D'}^-(S\cup N^-_D(S))=\emptyset$.
		\end{claim}
		\begin{proof}
			Note that $N_D^-(S\cup N^-_D(S))=\emptyset$ by definition of $S$ being a pseudo-source set of $D$.  Thus the only way the claim can fail is if there exists some $u\notin S\cup N^-_D(S)$ for which $D'$ contains an arc from $u$ to $S\cup N^-_D(S)$.  By definition of $D'$, this would only be possible if there exists some proper pseudo-source set $T$ of $D$ such that $T\cup N^-_D(T)$ contains $u$ and intersects $S\cup N^-_D(S)$.  However, the two sets $T\cup N^-_D(T)$ and $S\cup N^-_D(S)$ are strongly connected components of $D$ by \Cref{lem:walk}, so these two sets must either be equal or disjoint from each other, a contradiction to $T\cup N^-_D(T)$ intersecting $S\cup N^-_D(S)$ and containing a vertex $u\notin S\cup N^-_D(S)$. 
		\end{proof}
		
		The key observation from this claim is that if $j$ is such that $x_j\notin S\cup N^-_D(S)$ for some proper $(r-2)$-pseudo-source set $S$ of  $D$, then $x_{j+1}\notin S\cup N^-_D(S)$, since the claim says that the only arcs in $D'$ to vertices of $S\cup N^-_D(S)$ come from vertices of $S\cup N^-_D(S)$.  With this key observation we get the following. 
		\begin{claim}
			There exists an integer $i$ and a proper $(r-2)$-pseudo-source set $S$ of $D$ such that for all $j\ge i$ we have $x_{j-1}x_j\in E(D)$, and for all $j<i$ we have $x_j\in S\cup N^-_D(S)$.
		\end{claim}
		\begin{proof}
			By the key observation mentioned above, there exists some smallest integer $i\ge 0$ (possibly with $i=\ell+1$) such that for all $j\ge i$, $x_j\notin S\cup N^-_D(S)$ for every proper $(r-2)$-pseudo-source set $S$ of $D$.  In particular, this implies $x_{j-1}x_j\in E(D)$ for all $j\ge i$, since the only arcs which lie in $E(D')\sm E(D)$ contain two vertices in some $S\cup N^-_D(S)$ set with $S$ a proper $(r-2)$-pseudo-source set of $D$.
			
			If $i=0$ then the $j<i$ part of the claim trivially holds, so assume $i\ge 1$.  By definition of $i$, there exists some proper $(r-2)$-pseudo-source set $S$ of $D$ with $x_{i-1}\in S\cup N^-_D(S)$.  Using the key observation together with induction shows that we must have $x_j\in S\cup N^-_D(S)$ for all $j<i$, completing the claim.
		\end{proof}
		Let $i,S$ be as in the claim above.  If $i\le 1$ then $x_0\cdots x_\ell$ itself is a path in $D$ and there is nothing to prove, so assume $i\ge 2$.  By \Cref{lem:walk} (which we can apply since $D$ avoiding $(r-1)$-source sets means it is source-free), there exists a walk of length at most $2(r-2)$ from $x_0$ to $x_{i-1}$ in $D$.  Composing this with the walk $x_{i-1}\cdots x_\ell$ (which is also in $D$ by definition of $i$) gives a path of length at most $2(r-2)+\ell-(i-1)\le 2(r-2)+\ell-1$ in $D$, giving the result.
	\end{proof}

	With everything in place, we can now prove our main result
	\begin{proof}
		Recall that we wish to prove that if $D$ contains no $(r-1)$-source sets for $r\ge 2$, then it contains a $\beta^{(r)}$-kernel where $\beta^{(2)}=(3,2)$ and inductively  \[\beta_i^{(r)}= 2(\beta_{i}^{(r-1)}+r-2)\ \ \tr{for }i<r,\hspace{2em} \beta_r^{(r)}=2r-3.\]
		We prove this by induction on $r$, the $r=2$ case following from \Cref{lem:2Disjoint} and the fact that every digraph contains a 2-kernel.
		
		Assume we have proven the result up to some value $r\ge 3$.  Let $D$ be a digraph without $(r-1)$-source sets and let $D'$ be as in \Cref{lem:D'}, i.e.\ the digraph obtained by taking each proper $(r-2)$-pseudo-source set $S$ of $D$ and adding all possible arcs between the vertices of $N^-_D(S)$, and let $Q_r$ be an arbitrary 2-kernel of $D'$. 
		Because $D'$ has no $(r-2)$-pseudo-source sets by \Cref{lem:D'}, \Cref{prop:noStructures} implies that there exist a $\gam$-kernel $(Q_1,\ldots,Q_{r-1})$ for $D'$ where $\gam_i:=2\beta_i^{(r-1)}+1$ for all $i$ such that each $Q_i$ is disjoint from $Q_r$
		
		We claim that each $Q_i$ is a $2(\beta^{(r-1)}_i+r-2)$-kernel of $D$ for all $i<r$.  Indeed, since $D\sub D'$, each of these sets are still independent.  Fix an integer $i$ and $v\in V(D)$ and consider a shortest walk $x_0\cdots x_\ell$ in $D'$ from a vertex $u\in Q_i$ to $v$, noting that $\ell\le 2\beta_i^{(r-1)}+1$ by hypothesis.  By \Cref{cor:paths}, there exists a path from $x_0$ to $x_\ell$ in $D$ of length at most $ 2\beta_i^{(r-1)}+1+2(r-2)-1=2(\beta_i^{(r-1)}+r-2)$, proving the claim.
		
		An essentially identical argument shows that $Q_r$ is a $q$-kernel of $D$ with $q=2+2(r-2)-1=2r-3$, which combined with the claim above gives the desired result.
	\end{proof}
	
	\section{Proof of \Cref{thm:bipartite}: Bipartite Digraphs}\label{sec:bipartite}
	
	In this section we show that bipartite digraphs with high girth have small $q$-kernels.  We begin by proving  our result for quasikernels,  for which the following will be useful.
	\begin{lem}\label{lem:disjointUnion}
		If $D$ is a digraph such that for every vertex $u\in V(D)$ there exists a vertex $v\in N^+(u)$ which has in-degree 1, then $D$ is a disjoint union of directed cycles.
	\end{lem}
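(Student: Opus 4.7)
The plan is to build a function $f : V(D) \to V(D)$ from the hypothesis, show that it is a permutation of $V(D)$, and then argue that the arcs of $D$ are precisely the arcs $u \to f(u)$, so that $D$ is the functional digraph of $f$, which is a disjoint union of cycles.

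Concretely, the hypothesis allows us, for each vertex $u$, to pick some out-neighbor $f(u) \in N^+(u)$ with $\deg^-(f(u)) = 1$. The first step is to observe that $f : V(D) \to V(D)$ is injective: if $f(u_1) = f(u_2) = v$, then both $u_1$ and $u_2$ are in-neighbors of $v$, but $v$ has in-degree exactly $1$, forcing $u_1 = u_2$. Since $V(D)$ is finite, injectivity upgrades to $f$ being a bijection, so every vertex $v \in V(D)$ equals $f(u)$ for some (unique) $u$, and in particular every vertex has in-degree exactly $1$ (namely, its unique in-neighbor is $f^{-1}(v)$).

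The second step is to show $D$ has no arcs besides the $|V(D)|$ arcs of the form $u \to f(u)$. Suppose for contradiction that $v \to w$ is an arc with $w \neq f(v)$. Since $w$ has in-degree $1$, its unique in-neighbor is $f^{-1}(w)$, but we just produced a second in-neighbor $v$; hence $v = f^{-1}(w)$, i.e.\ $f(v) = w$, a contradiction. Thus $E(D) = \{(u, f(u)) : u \in V(D)\}$ exactly, and every vertex has out-degree exactly $1$.

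Finally, a digraph in which every vertex has in-degree and out-degree $1$, with arcs determined by a permutation $f$, decomposes into the disjoint union of directed cycles corresponding to the orbits of $f$. This gives the conclusion. The proof is essentially bookkeeping; the only nontrivial step is the injectivity observation, which is the leverage point where the in-degree-$1$ hypothesis on the chosen out-neighbor is used. No real obstacle is expected — the main thing to be careful about is to use the hypothesis to rule out \emph{extra} out-arcs (not just to define $f$), which is what step two above accomplishes.
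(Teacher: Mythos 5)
Your proof is correct and follows essentially the same route as the paper: the key step in both is that the map sending each vertex to a chosen in-degree-$1$ out-neighbor is injective, forcing every vertex to have in-degree exactly $1$. The only cosmetic difference is at the end, where the paper deduces that all out-degrees equal $1$ by comparing degree sums, while you rule out extra arcs directly via uniqueness of in-neighbors; both are immediate.
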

	\begin{proof}
		Since by hypothesis every vertex in $V(D)$ has an out-neighbor with in-degree 1, and since by definition each vertex of in-degree 1 has only 1 in-neighbor, it follows that there are at least (and hence exactly) $|V(D)|$ vertices with in-degree 1.  It follows that every vertex also has out-degree 1 (using that every vertex has positive out-degree by hypothesis and that the sum of the in-degrees equals the sum of the out-degrees), from which we conclude that $D$ is a disjoint union of directed cycles.
	\end{proof}
	With this we can prove our result for small quasikernels.
	\begin{proof}[Proof of \Cref{prop:smallQuasiBipartite}]
		Recall that we wish to show that if $D$ is a source-free digraph with bipartition $U\cup V$ which is not the disjoint union of directed 2-cycles and 4-cycles, then there exists a quasikernel $Q$ such that $|Q|<\half |V(D)|$.  If  $|U|<\half |V(D)|$, then taking $Q=U$ is a quasikernel (since $D$ is source-free and bipartite) which satisfies the result.  A similar result holds if $|V|<\half |V(D)|$, so we may assume $|U|=|V|=\half |V(D)|$.
		
		First consider the case that $D$ is a directed cycle on $x_1,\ldots,x_{2\ell}$ for some $\ell\ge 3$.  In this case the set $Q=\{x_1\}\cup \{x_{2i}:2\le i\le \ell-1\}$ is a quasikernel with $|Q|=\ell-1<\half |V(D)|$.  A similar argument works if $D$ is a disjoint union of directed cycles (which necessarily has at least one cycle of length at least 6 by hypothesis), so we may assume $D$ is not of this form.  The previous lemma thus implies that there exists a vertex $u\in V(D)$ such that every vertex in $N^+(u)$ has in-degree at least 2, and without loss of generality we may assume $u\in U$.
		
		Consider $Q=U\sm \{u\}$, and observe that $Q$ is a quasikernel due to both our hypothesis on $u$ (which ensures that every vertex of $V$ is within distance 1 of $Q$) and our hypothesis that $D$ is source-free (which ensures $u$ is within distance 1 of some vertex in $V$).  We also have $|Q|=\half |V(D)|-1$ by hypothesis on $U$, proving the result.
	\end{proof}

	We now prove our results about $q$-kernels.  Here the key idea will be to reduce the problem to the following case.
	
	\begin{defn}
		We say that a finite digraph $D$ is \textit{unicyclic} if every vertex has in-degree 1 and if the underlying graph of $D$ is connected.
	\end{defn}
	The name ``unicyclic'' is motivated by the following result.
	\begin{lem}\label{lem:unicyclic}
		If $D$ is unicyclic, then (1) $D$ contains exactly one directed cycle $C$, and (2) for every vertex $w\in V(D)$, there exists a unique directed path $P_w$ whose first vertex is in $C$, last vertex is $w$, and which contains no other vertices of $C$.
	\end{lem}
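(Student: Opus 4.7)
The plan is to introduce the predecessor function $p : V(D) \to V(D)$, where $p(v)$ denotes the unique in-neighbor of $v$; this is well-defined because every vertex has in-degree exactly one. For part (1), I would first establish existence of a directed cycle: iterating $p$ from any starting vertex $w$ yields an eventually periodic sequence $w, p(w), p^2(w), \ldots$ by finiteness of $V(D)$, and the periodic part, read backward along the arcs $p(x)\to x$, traces out a directed cycle in $D$. For uniqueness of the cycle, the key observation is that any directed cycle is absorbing under $p$: if $v\in C$, the vertex preceding $v$ in $C$ is an in-neighbor of $v$ and hence must equal $p(v)$, so $p(v)\in C$. Consequently, two directed cycles sharing a vertex coincide (trace backward via $p$), so distinct directed cycles are vertex-disjoint.

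To deduce that only one cycle exists, I would define $c(w)$ to be the unique directed cycle that the $p$-orbit of $w$ eventually enters. For any arc $u\to v$ of $D$, in-degree one forces $p(v)=u$, so the $p$-orbit of $v$ is the $p$-orbit of $u$ preceded by $v$, giving $c(u)=c(v)$. Since every edge $\{u,v\}$ of the underlying undirected graph comes from an arc $u\to v$ or $v\to u$ of $D$, the function $c$ is constant along every undirected edge, and connectivity of the underlying graph then forces $c$ to be globally constant. Combined with the fact that $c(w)=C'$ whenever $w$ lies on some directed cycle $C'$, this yields a unique cycle $C$.

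For part (2), given $w\in V(D)$ I would let $k\ge 0$ be the smallest integer with $p^k(w)\in C$ (which exists by part (1)) and take $P_w$ to be the directed path $p^k(w)\to p^{k-1}(w)\to \cdots \to p(w)\to w$. The minimality of $k$ ensures that the only vertex of $P_w$ in $C$ is $p^k(w)$, which is the first vertex. For uniqueness, in any candidate path $v_0\to v_1\to\cdots\to v_m=w$ with the required properties, in-degree one forces $v_{m-1}=p(w)$, and by induction $v_{m-i}=p^i(w)$; the requirement that exactly the initial vertex lies in $C$ then pins down $m=k$.

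There should be no serious obstacle here; the argument is really just unpacking what the in-degree one hypothesis forces on the structure of $D$. The most substantive step is noticing that connectivity of the underlying \emph{undirected} graph (as opposed to any stronger connectivity notion) suffices to make the cycle-assignment $c$ constant, via the observation that any undirected edge corresponds to some arc of $D$ and such arcs automatically tie the $p$-orbits of their endpoints together.
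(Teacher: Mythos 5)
Your proposal is correct and follows essentially the same route as the paper: iterate the unique-in-neighbor (predecessor) map to produce a directed cycle, observe that directed cycles are absorbing under this map so that distinct cycles are vertex-disjoint, use connectivity of the underlying undirected graph to rule out a second cycle, and trace predecessors backward from $w$ to get existence and uniqueness of $P_w$. The only real difference is organizational: in the connectivity step you make the ``cycle eventually reached'' function $c$ explicit and show it is constant across every undirected edge, whereas the paper argues by contradiction along a shortest undirected path between two hypothetical cycles; this is the same underlying observation, packaged slightly more cleanly in your version.
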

	\begin{proof}
		Let $w_1\in V(D)$ be arbitrary and iteratively define $w_{i+1}$ to be the unique vertex in $N^-(v_i)$.  Because $D$ is finite, we must have $w_i=w_j$ for some $i<j$.  If one chooses this $i,j$ so that $j-i$ is as small as possible, then we see that $w_j,w_{j-1},\ldots,w_{i+1}$ are distinct vertices with $w_{k+1}w_k\in E(D)$ for all $i<k<j$ and with $w_{i+1}w_j\in E(D)$.  We conclude that $D$ contains at least one directed cycle.  Moreover, the directed path $w_i,w_{i-1}\ldots,w_1$ shows that every vertex of $D$ has a directed path to it from a directed cycle which intersects this cycle only at its endpoint.  It thus remains to prove uniqueness of these cycles and paths.

		Let $C=(v_j,v_{j-1},\ldots v_1)$ be a directed cycle and assume for contradiction that there exists some other directed cycle $C'$ which contains $v_1$.  Because $v_1$ has a unique in-neighbor, namely $v_2$, we see that the in-neighbor of $v_1$ in $C'$ must also be $v_2$, and continuing this we see that $C'=C$, a contradiction.  Thus if another directed cycle $C'$ exists it must be disjoint from $C$.  Because $D$ is unicyclic, there exists some shortest (undirected) path $u_1\cdots u_t$ in the underlying graph of $D$ with $u_1\in C$ and $u_t\in C'$.  Because this is a shortest path, we must have $u_2\notin C$, and because the unique in-neighbor of $u_1$ lies in $C$ this implies $u_1u_2\in E(D)$.  Continuing this argument by induction, we conclude that $u_iu_{i+1}\in E(D)$ for all $i$, but this implies $u_t$ has an in-neighbor that is not in $C'$, a contradiction to the in-degree 1 hypothesis.  We conclude that $C$ is the only directed cycle in $D$.
		
		Finally, let $w\in V(D)$ and suppose there exists two directed paths $u_su_{s-1}\cdots u_1$ and $v_tv_{t-1}\cdots v_1$ with the properties as in the lemma, noting that this means $u_1=v_1=w$.  Iterativelly given that $u_i=v_i$ for some $i<\min\{s,t\}$, we must have that $u_{i+1}=v_{i+1}$ since $u_i=v_i$ has a unique in-neighbor.  If, say, $s<t$, then this implies the path $v_t\cdots v_1$ contains two vertices of $C$, namely  $v_s=u_s$ and $v_{s+1}$ (since this is the unique in-neighbor of $v_s=u_s$), a contradiction.  Thus we must have $s=t$ and that $u_i=v_i$ for all $i$, proving  these directed paths are unique.
	\end{proof}
	We now prove the following special case of \Cref{thm:bipartite} when $D$ is unicyclic and $q$ is odd, which will turn out to give the full result after a few reductions.

	\begin{prop}\label{prop:unicyclic}
		Let $D$ be a unicyclic digraph with directed cycle of length $2\ell\ge 4$ and which has bipartition $U\cup V$.  If $q\ge 3$ is an odd integer and $|V(D)|\ge (q+3)/2$, then there exists a $q$-kernel $Q\sub U$ with $|Q|\le \frac{2}{q+3}|V(D)|$.
	\end{prop}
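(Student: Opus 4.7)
The plan is to prove the proposition by strong induction on $|V(D)|$. By \Cref{lem:unicyclic}, $D$ consists of a unique directed cycle $C=(c_0,\ldots,c_{2\ell-1})$ together with an out-tree $T_i$ rooted at each $c_i$, so $V(D)=\bigsqcup_i T_i$; for each tree vertex $v$ I will write $d(v)$ for its depth in its tree, and I assume without loss of generality that $c_0\in U$, so that $c_i\in U$ iff $i$ is even. The central parity observation is that since $q\ge 3$ is odd, $q+1$ is even, so vertices whose depth along any directed path differs by a multiple of $q+1$ lie on the same side of the bipartition, which is what keeps the constraint $Q\subseteq U$ compatible with spacing $Q$-vertices at distance $q+1$.

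For the base case $D=C$, I place $Q$-vertices on the cycle at even positions $c_0,c_{q+1},c_{2(q+1)},\ldots$, with at most one wrap-around gap smaller than $q+1$ (which is still even since both $2\ell$ and $q+1$ are even, preserving parity). A direct computation, splitting into the subcase $2\ell\le q+1$ where a single $Q$-vertex suffices thanks to $|V(D)|\ge (q+3)/2$, and the subcase $2\ell>q+1$ where $|Q|=\lceil 2\ell/(q+1)\rceil$, verifies $|Q|\le 4\ell/(q+3)=2|V(D)|/(q+3)$.

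For the inductive step when $D$ has at least one tree vertex, I pick a deepest leaf $v$ at depth $h$ with ancestor path $c_i=u_0\to u_1\to\cdots\to u_h=v$, and let $k$ denote the length of the maximal pendant chain ending at $v$ (the largest $k$ such that $u_{h-k+1},\ldots,u_{h-1}$ each have out-degree one). When the chain is long, $k\ge (q+3)/2$, I take $u^*$ to be the shallowest vertex of the chain lying in $U$ if possible; if parity forces the shallowest chain vertex into $V$, I instead take $u^*=u_{h-k}$, which is the branching vertex above the chain, lies in $U$ by parity flip, and whose within-$q$ descendants comprise a set $X$ of size at least $k+1\ge (q+3)/2$ containing the whole chain (plus possibly sibling subtrees). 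After adding $u^*$ to $Q$ and setting $D'=D\setminus X$, the graph $D'$ remains unicyclic because $X$ is the full subtree of $u^*$ up to depth $q$ and $v$ was deepest, so no vertex outside $X$ loses an in-neighbor; applying induction yields $|Q|\le 2(|V(D)|-|X|)/(q+3)+1\le 2|V(D)|/(q+3)$ since $|X|\ge (q+3)/2$.

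The main obstacle is the short-chain case $k<(q+3)/2$, where the tree branches too close to $v$ for a single $u^*$ on the chain to cover $(q+3)/2$ vertices on its own. The key point is that since $v$ is a deepest leaf and the pendant chains near $v$ are all short, branching at $u_{h-k}$ (and iterated branching further up) forces many leaves at comparable depths, all sharing a common ancestor within $(q+1)/2$ steps; by choosing $u^*$ to be a carefully selected ancestor at depth $\max(0,h-\lceil(q+1)/2\rceil)$ of the right parity, the set $X$ of within-$q$ descendants of $u^*$ sums across branches to at least $(q+3)/2$, after which the removal-and-recursion argument above goes through unchanged. Handling this bushy case while simultaneously respecting the parity constraint $u^*\in U$, the coverage condition, and the $(q+3)/2$ charging ratio will be the core technical challenge; this may require a secondary inductive or case analysis on the branching degree and the height of the subtree at $u_{h-k}$.
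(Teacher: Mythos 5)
Your proposal follows the same skeleton as the paper's proof --- induct on $|V(D)|$, excise a deep piece of the tree of size at least $(q+3)/2$ while charging one new vertex of $U$ to $Q$, and finish by spacing vertices around the cycle --- and your base case $D=C$ is correct. The first genuine gap is in your long-chain step. If the pendant chain ending at the deepest leaf $v$ has length $k>q+1$, then the within-$q$ descendants of your $u^*$ (the shallowest chain vertex, or the branching vertex $u_{h-k}$) do \emph{not} contain the whole chain: the chain extends more than $q$ levels below $u^*$, so the vertex just below $X$ loses its unique in-neighbour, $D\setminus X$ is not unicyclic, and $u^*$ cannot reach the bottom of the chain within distance $q$. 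The correct cut is at the \emph{bottom} of the chain, as in \Cref{cl:weakDistance} of the paper: take $w'$ at distance $(q+1)/2$ above the deepest leaf, delete everything within distance $(q+1)/2$ of $w'$, and add to $Q$ whichever of $w'$ and its in-neighbour lies in $U$ (note this vertex need not itself be deleted). Relatedly, you never verify that the residual digraph satisfies the inductive hypothesis $|V(D')|\ge(q+3)/2$; this can fail when an entire pendant subtree of a short cycle is deleted (e.g.\ $q=7$ and a $4$-cycle carrying one pendant path), and the paper's estimate $|V(D')|\ge 2\ell+\dist(C,w)-(q+3)/2\ge(q+5)/2$ is exactly what rules this out.

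The second and more serious gap is the ``bushy'' case you defer, which is where essentially all of the difficulty lives. Once every vertex is within distance roughly $(q+1)/2$ of the cycle, your candidate $u^*$ at depth $\max(0,h-\lceil(q+1)/2\rceil)$ sits on the cycle itself, and deleting its subtree destroys the cycle, so the recursion cannot continue. At that point one must select $Q$ from the cycle, and the number of off-cycle vertices attached near each cycle position (the quantities $\ty(j)$ in the paper) forces consecutive $Q$-vertices to be only about $(q+3)/2$ apart on the cycle rather than the $q+1$ of your base case; showing that this tighter spacing still meets the bound $\frac{2}{q+3}|V(D)|$ requires the contraction argument of \Cref{cl:mediumDistance} (delete all type-$j$ vertices and add the arc $v_{j-1}u_{j+1}$), the counting inequalities of \Cref{cl:smallD} and \Cref{cl:typeA}, and a final construction with spacing $m=\lceil(q+3)/4\rceil$ anchored at the index minimizing $\ty(j)$. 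None of this follows from the outline you give, so the proposal as written does not yet constitute a proof.
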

	We note that this statement differs slightly from \Cref{thm:bipartite} in that we have replaced the conditions on cycle lengths of $D$ with the condition $|V(D)|\ge (q+3)/2$ (which will allow our inductive  proof to go through  smoother), and we have changed what the notation $\ell$ represent.  We also note that this result is false for $\ell=1$, as $D$ could consist of a directed 2-cycle $uv$ together with a directed path of $q$ vertices attached to $v$.
	\begin{proof}
		Assume for contradiction that there exists a counterexample $D$, and choose such a counterexample with $|V(D)|$ as small as possible.  Let $C$ denote the vertices of the directed cycle of $D$, and for each $w\in V(D)$, let $P_w$ denote the path to $w$ guaranteed by \Cref{lem:unicyclic}.  We begin by showing that every vertex is somewhat close to the cycle.

		\begin{claim}\label{cl:weakDistance}
			We have $\dist(C,w)<q$ for all $w\in V(D)$.
		\end{claim}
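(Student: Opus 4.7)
The plan is a proof by contradiction leveraging the minimality of $D$. Suppose some $w\in V(D)$ has $\dist(C,w)\ge q$, and choose such a $w$ with $k:=\dist(C,w)$ as large as possible; since any out-neighbor of $w$ would lie farther from $C$, maximality forces $w$ to be a sink. Let $P_w=x_0x_1\cdots x_k$ be the unique directed path from $C$ to $w$ furnished by \Cref{lem:unicyclic}, with $x_0\in C$ and $x_k=w$.

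Set $D':=D-w$. Because $w$ is a sink, $D'$ remains unicyclic with the same cycle $C$, is bipartite with bipartition inherited from $D$, and has $|V(D')|=|V(D)|-1$. The estimate $|V(D)|\ge |C|+k\ge 2\ell+q\ge q+4$ yields $|V(D')|\ge (q+3)/2$, so the hypotheses of \Cref{prop:unicyclic} apply to $D'$. By the minimality of $D$, there is a $q$-kernel $Q'\sub U'\sub U$ of $D'$ with $|Q'|\le \frac{2}{q+3}|V(D')|<\frac{2}{q+3}|V(D)|$. Since $w$ is a sink in $D$, no shortest path to a vertex of $V(D')$ passes through $w$, so $\dist_D(Q',v)=\dist_{D'}(Q',v)\le q$ for every $v\in V(D')$. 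Thus $Q'$ is a $q$-kernel of $D$---contradicting $D$ being a counterexample---provided that $\dist_D(Q',w)\le q$, equivalently $\dist_D(Q',x_{k-1})\le q-1$.

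Since $Q'\sub U$, $D$ is bipartite, and $q$ is odd, distances from $Q'$ to vertices of $U$ are even (hence $\le q-1$) and distances to vertices of $V$ are odd (hence $\le q$). If $w\in V$, then $x_{k-1}\in U$ and $\dist_D(Q',x_{k-1})\le q-1$ automatically, completing the contradiction in this case. The remaining case $w\in U$ forces $x_{k-1}\in V$, and the only obstruction is $\dist_D(Q',x_{k-1})=q$. A cascade along $P_w$---using that each $x_i$ has $x_{i-1}$ as its unique in-neighbor, so $\dist_D(Q',x_i)=\dist_D(Q',x_{i-1})+1$ whenever $x_i\notin Q'$---then forces $x_{k-q-1}\in Q'$.

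In this bad subcase we perform a swap $Q'':=(Q'\sm\{x_{k-q-1}\})\cup\{w\}$, yielding an independent subset of $U$ of the same size as $Q'$. The main obstacle is verifying that $Q''$ still reaches every descendant of $x_{k-q-1}$ (in particular $x_{k-q},\ldots,x_{k-1}$ together with any side-branches) within distance $q$, since those previously relied on $x_{k-q-1}$ alone. My plan is to bake extra greediness into the choice of $Q'$---obtained by reapplying minimality to a further-trimmed digraph such as $D-\{x_{k-q},\ldots,x_k\}$---so that the preceding $Q'$-vertex on $P_w$ lies close enough (at index at least $k-2q-1$) to cover the subtree at $x_{k-q-1}$ once the swap is performed. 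Controlling the interaction with possible side branches hanging off $P_w$ is where I expect the technical difficulties to concentrate.
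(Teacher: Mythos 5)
Your argument is clean up through the case $w\in V$, where the parity observation does the work, but the remaining case $w\in U$ with $\dist_D(Q',x_{k-1})=q$ is a genuine gap, not a technicality. The swap $Q''=(Q'\sm\{x_{k-q-1}\})\cup\{w\}$ cannot work as stated: $w$ is a sink, so it covers nothing but itself, and every vertex that was reached only through $x_{k-q-1}$ --- the segment $x_{k-q},\ldots,x_{k-1}$ together with all side branches hanging off it, which may be arbitrarily large --- loses its coverage. Your proposed repair, applying minimality to $D-\{x_{k-q},\ldots,x_k\}$ and re-inserting one vertex, also fails: deleting $q+1$ vertices buys room for exactly one additional kernel vertex in the size accounting, but the only $U$-vertex on the path that could serve as that addition is $x_{k-q-1}$ (since $x_{k-q}\in V$ by parity), and $\dist(x_{k-q-1},w)=q+1>q$. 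Moreover the side branches attached to $x_{k-q},\ldots,x_{k-1}$ would not be deleted by your trimming, so they would still need to be reached through the tail. (There is also a boundary issue when $k=q$, where $x_{k-q-1}$ does not exist on $P_w$.)

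The paper resolves exactly this tension by cutting in the middle of the tail rather than at its end: it takes $w'\in P_w$ with $\dist(w',w)=(q+1)/2$, deletes the entire out-ball $X=\{x:\dist(w',x)\le (q+1)/2\}$ (so all side branches in that range are removed along with the path segment), and adds back the single vertex $z\in\{w'\}\cup N^-(w')$ lying in $U$. Then $z$ covers all of $X$ within distance $1+(q+1)/2\le q$, and since $|X|\ge (q+3)/2$ the deletion pays for the added vertex in the bound $|Q|\le \frac{2}{q+3}|V(D)|$. As written, your case $w\in U$ does not close, and salvaging the sink-deletion idea would essentially force you back to this midpoint construction.
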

		For this proof, we will make multiple uses of the fact that if $w'\in P_w$, then
		\[\dist(C,w)=\dist(C,w')+\dist(w',w),\]
		since $P_w$ being the unique path from \Cref{lem:unicyclic} implies that $P_w$ must be composed of $P_{w'}$ together with the unique path from $w'$ to $w$ if $w'\notin C$ (and if $w'\in C\cap P_w$ then trivially $\dist(C,w)=\dist(w',w)$).
		
		\begin{proof}
			Let $w\in 
			V(D)$ be such that $\dist(C,w)=\max_x \dist(C,x)$ and assume for contradiction that $\dist(C,w)\ge q$.  Since $P_w$ has length $\dist(C,w)\ge q\ge  (q+3)/2$ (using $q\ge 3$) and $q$ is odd, there exists a vertex $w'\in P_w$ with $\dist(w',w)=(q+1)/2$, noting that \[\dist(C,w')=\dist(C,w)-\dist(w',w)>0,\] so $w'\notin C$.   Let $X=\{x\in V(D):\dist(w',x)\le (q+1)/2\}$ and define $D'=D-X$.  We claim that $D'$ satisfies the conditions of the lemma.
			
			To show $D'$ is unicyclic, it suffices to show that each path $P_y$ with $y\in V(D')$ is contained in $V(D')$ (since this will imply each $y$ continues to have in-degree 1 and that the underlying graph is connected).  Assume for contradiction we had some $y\notin X$ and  $x\in P_y\cap X$.  Because $x\in X$, there exists a path from $w'$ to $x$, and having $x\in P_y$ implies there is also a path from $w'$ to $y$ by transitivity.  Because $y\notin X$, we must have $\dist(w',y)>(q+1)/2=\dist(w',w)$.  But this implies
			\[\dist(C,y)=\dist(C,w')+\dist(w',y)>\dist(C,w')+\dist(w',w)=\dist(C,w),\]
			a contradiction to how we chose $w$.
			
			We next show $|V(D')|\ge (q+3)/2$. For this, we	observe that the number of vertices removed from $P_w$ is exactly $\dist(w',w)+1=(q+3)/2$ and that we removed none of the $2\ell$ vertices in the cycle, so we have
			\[|V(D')|\ge 2\ell+\dist(C,w)-(q+3)/2\ge 4+q-(q+3)/2=(q+5)/2,\]
			proving the desired bound.  Finally, since $D$ has bipartition $U\cup V$, it is clear that $D'$ has bipartition $U'\cup V'$ with e.g.\ $U'=U\cap V(D')$.
			
			We thus see that $D'$ satisfies the conditions of the proposition.  Because $D$ was a minimal counterexample,  there must exist a $q$-kernel $Q'\sub U'$ of $D'$ satisfying \[|Q'|\le \frac{2}{q+3}|V(D')|=\frac{2}{q+3}(|V(D)|-|X|)\le \frac{2}{q+3}|V(D)|-1,\]
			with this last step using $|X|\ge (q+3)/2$ (since $X$ includes every vertex along the path from $w'$ to $w$).  
			
			Define $z$ to be $w'$ if $w'\in U$ and $z$ to be the unique vertex in $N^-_D(w')$ otherwise.  Note that in either case $z\in U$.  Define $Q=Q'\cup \{z\}\sub U$, noting that this satisfies $|Q|\le \frac{2}{q+3}|V(D)|$ by the inequality above.  We also have for $x\notin X$ that 
			\[\dist_D(Q,x)\le \dist_{D'}(Q',x)\le q,\]
			and for $x\in X$ we have
			\[\dist(Q,x)\le \dist(z,x)\le \dist(z,w')+\dist(w',x)\le 1+(q+1)/2\le q.\]
			Thus $Q$ is a $q$-kernel satisfying the conditions of the proposition, contradicting $D$ being a counterexample.  We  conclude the claim.	
		\end{proof}
		We can boost the previous claim by showing there are relatively few vertices outside the cycle.  For this we introduce some new notation that will be vital for the rest of the proof. 
		
		Label the vertices of the cycle $C$ by $(u_1,v_1,\ldots,u_\ell,v_\ell)$ with $u_j\in U$ for all $j$.  We say that a vertex $w\in V(D)$ has \textit{type $j$} if the path $P_w$ begins with either $u_j$ or $v_j$.  Equivalently, $w$ is of type $j$ if $\dist(C,w)=\dist(\{u_j,v_j\},w)$.  We let $\ty(j)$ denote the number of vertices of type $j$ in $D$.  A useful observation that we  use throughout the proof is that if $w$ is of type $j$, then
		\begin{equation}\dist(u_j,w)\le \ty(j)-1,\label{eq:chunkDistance}\end{equation}
		as the shortest path from $u_j$ to $w$ only uses vertices of type $j$ (and the length of this path will be 1 less than the total number of vertices in the path).  With this observation in mind, we see that the following significantly improves upon the previous claim.
		
		\begin{claim}\label{cl:mediumDistance}
			We have $\ty(j)<(q+3)/2$ for all $j$.
		\end{claim}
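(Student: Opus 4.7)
I will mimic the strategy of Claim~\ref{cl:weakDistance}: assume for contradiction that $\ty(j) \geq (q+3)/2$ for some $j$, pick a type-$j$ vertex $w$ maximizing $d := \dist(C,w)$ over the type-$j$ vertices, and split into cases based on the size of $d$.

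In the deep case $d \geq (q+3)/2$ the argument of Claim~\ref{cl:weakDistance} goes through essentially verbatim. Let $w' \in P_w$ be the ancestor of $w$ at distance $(q+1)/2$ from $w$; then $\dist(C, w') = d - (q+1)/2 \geq 1$, so $w'$ lies strictly inside one of the subtrees hanging off $u_j$ or $v_j$ and not on $C$. The ball $X = \{x : \dist(w', x) \leq (q+1)/2\}$ equals the full subtree rooted at $w'$ (the maximality of $d$ restricted to type $j$ forbids descendants of $w'$ at distance exceeding $(q+1)/2$), so $D - X$ remains unicyclic with the same cycle and bipartition. Applying the minimality of $D$ to $D - X$ and extending the resulting $q$-kernel by an appropriate $U$-vertex near $w'$ (exactly as in the proof of Claim~\ref{cl:weakDistance}) yields the desired contradiction.

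In the shallow case $d \leq (q+1)/2$, both subtrees hanging at $u_j$ and $v_j$ have depth at most $(q+1)/2$ yet their combined size is $\geq (q+3)/2$. I would form $D'$ from $D$ by removing the entire type-$j$ substructure (including the cycle vertices $u_j, v_j$) and inserting the single arc $v_{j-1} \to u_{j+1}$ to reconnect the cycle. Since $v_{j-1} \in V$ and $u_{j+1} \in U$, the resulting $D'$ is bipartite and unicyclic with cycle length $2(\ell-1)$. Applying the minimality hypothesis to $D'$ yields a $q$-kernel $Q' \sub U \setminus \{u_j\}$ of size at most $\frac{2}{q+3}(|V(D)| - \ty(j))$, and I set $Q := Q' \cup \{u_j\}$. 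The size bound $|Q| \leq \frac{2}{q+3}|V(D)|$ follows from $\ty(j) \geq (q+3)/2$; coverage of the removed type-$j$ vertices by $u_j$ is automatic since $1 + d \leq (q+3)/2 \leq q$; and any $v \in V(D')$ is reached either by a shortest $D'$-path from $Q'$ that avoids the added arc (which is simultaneously a $D$-path) or, if the shortest $D'$-path uses the arc, by rerouting from $u_j$ through the length-$2$ $D$-path $u_j \to v_j \to u_{j+1}$. The latter keeps the resulting $D$-distance at most $q$ because $\dist_{D'}(Q', v_{j-1}) \geq 1$ (as $Q' \sub U$ while $v_{j-1} \in V$), so the net length is at most $q - 1 + 2 = q+1$ reduced by the saved step, i.e.\ at most $q$.

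The main obstacle is the edge cases of the shallow case: when $\ell = 2$ the proposed cycle of $D'$ would have length below $4$, and when $|V(D')| < (q+3)/2$ the inductive hypothesis cannot be applied directly. To handle these I would verify separately that a minimal counterexample must satisfy $\ell \geq 3$ with enough slack for $|V(D')|$ to meet the threshold. When $\ell = 2$, direct inspection shows that $\{u_1, u_2\}$ is always a $q$-kernel of size $2$, which suffices whenever $|V(D)| \geq q+3$; and the remaining range $(q+3)/2 \leq |V(D)| < q+3$ admits a single-vertex $q$-kernel on a short case analysis, using the depth bound $d < q$ from Claim~\ref{cl:weakDistance} together with the inequalities $|T_{u_i}| \geq d_{u_i} + 1$ and $|T_{v_i}| \geq d_{v_i} + 1$. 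An analogous bookkeeping guarantees $|V(D')| \geq (q+3)/2$ in the remaining subcases once $\ell \geq 3$.
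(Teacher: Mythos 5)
Your central surgery — delete the type-$j$ vertices, reconnect the cycle with the arc $v_{j-1}u_{j+1}$, apply minimality, add back $u_j$, and reroute any path using the new arc through $u_j\to v_j\to u_{j+1}$ — is exactly the paper's main case, and your rerouting accounting (the prefix from $Q'\sub U$ to $v_{j-1}\in V$ has length at least $1$, so the detour costs nothing) matches the paper's. The $\ell=2$ fallback $Q=\{u_1,u_2\}$ is also the paper's.

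The genuine gap is in the precondition $|V(D')|\ge(q+3)/2$ for invoking minimality, which you defer to ``analogous bookkeeping.'' That bookkeeping is not routine: it is false that $\ell\ge3$ gives enough slack. For instance, take $q=9$, a $6$-cycle, and a shallow tree of $6$ extra vertices hanging off $v_1$, so $\ty(1)=8\ge(q+3)/2=6$ but $|V(D)|-\ty(1)=4<6$; your shallow branch then produces a $D'$ on $4$ vertices, below the threshold, and no contradiction is derived. Your deep branch has the same defect (e.g.\ $q=11$, a $6$-cycle with a hanging path of length $7$ leaves $|V(D-X)|=6<7$). What is missing is the paper's preliminary case: if $|V(D)|-\ty(j)\le(q+1)/2$, then the singleton $Q=\{u_j\}$ is already a $q$-kernel (it reaches every vertex of another type in at most $1+(|V(D)|-\ty(j))\le q$ steps, and every type-$j$ vertex in at most $1+(q-1)\le q$ steps by Claim~\ref{cl:weakDistance}), yielding the contradiction directly. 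Only after disposing of that case may one assume $|V(D)|-\ty(j)\ge(q+3)/2$, which is precisely what legitimizes the inductive step. Relatedly, your deep/shallow split on the depth $d$ is unnecessary: Claim~\ref{cl:weakDistance} already gives $\dist(u_j,w)\le 1+(q-1)\le q$ for every type-$j$ vertex $w$, so the coverage of the deleted vertices by $u_j$ needs no depth hypothesis, and the entire ``deep'' branch (with its own unverified threshold) can be discarded once the $|V(D)|-\ty(j)\le(q+1)/2$ case is in place.
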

		\begin{proof}
			Assume for contradiction that there exists some $j$ with $\ty(j)\ge (q+3)/2$.  
			
			First consider the case $|V(D)|-\ty(j)\le (q+1)/2$, and let $Q=\{u_j\}\sub U$.  Observe that $|Q|\le \frac{2}{q+3}|V(D)|$ since we assume $|V(D)|\ge (q+3)/2$.  Also observe that if $w\in V(D)$ is of type $j'\ne j$, then there exists a shortest path from $u_j$ to $w$ obtained by using the arc $u_jv_j$ followed by some set of vertices not of type $j$, so
			\[\dist(u_j,w)\le 1+(|V(D)|-\ty(j))\le 1+(q+1)/2\le q.\]
			On the other hand, if $w$ is of type $j$, then \Cref{cl:weakDistance} implies
			\[\dist(u_j,w)\le 1+(q-1)\le q,\]
			where the $1$ comes from the fact that $P_w$ may begin with $v_j$.
			We conclude that $Q\sub U$ is a $q$-kernel for $D$ of size at most $\frac{2}{q+3}|V(D)|$, a contradiction to $D$ being a counterexample to the proposition.  Thus we may assume $|V(D)|-\ty(j)\ge (q+3)/2$.
			
			Next consider the subcase $\ell=2$ and define $Q=\{u_1,u_2\}\sub U$.  Observe that our hypothesis $\ty(j),|V(D)|-\ty(j)\ge (q+3)/2$ imply $V(D)|\ge q+3$, so $|Q|\le \frac{2}{q+3}|V(D)|$.  Moreover, if $w\in V(D)$ has type $j$, then again by using \Cref{cl:weakDistance} we find
			\[\dist(Q,w)\le \dist(u_j,w)\le q,\]
			so $Q$ is a $q$-kernel, contradicting $D$ being a counterexample.
			
			With this we may assume $\ell>2$ and $|V(D)|-\ty(j)\ge (q+3)/2$.  Let $D'$ be the digraph obtained from $D$ by deleting all vertices of type $j$ and then adding the arc $v_{j-1}u_{j+1}$ (with these indices written mod $\ell$).  It is not difficult to see that $D'$ is a unicyclic digraph with cycle length $2(\ell-1)\ge 4$ which has $|V(D')|=|V(D)|-\ty(j)\ge (q+3)/2$ and which has bipartition $U'\cup V'$ defined by e.g.\ $U'=U\cap V(D')$.  
			
			Since $D'$ satisfies the conditions of the proposition, and since we choose $D$ to be a minimal counterexample, there must exist some $q$-kernel $Q'\sub U'$ for $D'$ of size at most 
			\[\frac{2}{q+3}|V(D')|=\frac{2}{q+3}(|V(D)|-\ty(j))\le \frac{2}{q+3}|V(D)|-1.\]
			Let $Q:=Q'\cup \{u_j\}\sub U$, noting that $|Q|\le \frac{2}{q+3}|V(D)|$ by the above.  We claim that $Q$ is a $q$-kernel for $D$.  
			
			Indeed, if $w\in V(D)$ is of type $j$ then \Cref{cl:weakDistance} implies $\dist(u_j,w)\le q$.  Otherwise $w\in V(D')$, so there exists a directed path $P'=(w_1,\ldots,w_t)$ from $Q'$ to $w$ in $D'$ of length at most $q$.  If the arc $v_{j-1}u_{j+1}$ is not in $P'$, then every arc of $P'$ is still in $D$, so $P'\sub D$ and the distance condition is satisfied. Thus we may assume $v_{j-1}u_{j+1}$ is in $P'$, say with $w_s=v_{j-1}$, noting that $s\ge 2$ since any path from $Q'\sub U$ must start with a vertex in $U$.  In this case we consider the path $P=(u_j,v_j,w_{s+1},\ldots,w_t)$ (i.e. we delete the beginning of $P'$ up to $u_{j+1}$ and then append $u_j,v_j$ at the start).  Note that \[\mathrm{len}(P)=\mathrm{len}(P')+2-s\le \mathrm{len}(P')\le q,\] so this path shows $\dist_D(Q,w)\le q$.
			
			In total we conclude that $Q\sub U$ is a small $q$-kernel for $D$, a contradiction, giving the claim.
		\end{proof}
		Define $m=\lceil(q+3)/4\rceil$.  Equivalently, $m\ge 2$ is the integer such that either $q=-3+4m$ or $q=-5+4m$.  The idea here is that if we can find some $Q\sub U$ which uses a $1/m$ fraction of the $\ell$ vertices $u_j$, then it turns out $Q$ will have the desired size.  This idea quickly gives the following.
		\begin{claim}\label{cl:smallD}
			We have $\frac{2}{q+3}|V(D)|<\lceil \ell/m\rceil$.
		\end{claim}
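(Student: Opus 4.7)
I will prove the claim by contradiction: assuming $\frac{2}{q+3}|V(D)| \ge \lceil \ell/m \rceil$, I will exhibit a $q$-kernel $Q \sub U$ with $|Q| \le \frac{2}{q+3}|V(D)|$, contradicting the assumption that $D$ is a counterexample to the proposition. The natural candidate takes $u$-vertices spaced evenly around the cycle, namely $Q = \{u_{1+km} : 0 \le k < \lceil \ell/m \rceil\}$; this lies inside $U$ so is automatically independent, and has exactly $\lceil \ell/m \rceil \le \frac{2}{q+3}|V(D)|$ elements by our standing assumption.

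The core of the argument is to verify that $Q$ is a $q$-kernel. Fix any $w \in V(D)$ and let $j$ be its type. By construction of $Q$ there is some $u_i \in Q$ with $0 \le j - i \le m - 1$ (take the largest element of the arithmetic progression which is at most $j$), and walking forward along the cycle gives $\dist(u_i, u_j) = 2(j-i) \le 2(m-1)$. Combining \eqref{eq:chunkDistance} with \Cref{cl:mediumDistance} (and using that $q$ being odd forces $(q+3)/2$ to be an integer, so the strict bound $\ty(j) < (q+3)/2$ becomes $\ty(j) \le (q+1)/2$) gives $\dist(u_j, w) \le \ty(j) - 1 \le (q-1)/2$. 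Adding these estimates yields $\dist(u_i, w) \le 2(m-1) + (q-1)/2$.

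The remaining step is the arithmetic check $2(m-1) + (q-1)/2 \le q$. Since $q$ is odd and $m = \lceil (q+3)/4 \rceil$, one has either $q = 4m-3$ or $q = 4m-5$, so in either case $4m \le q+5$, equivalently $2(m-1) \le (q+1)/2$. This gives $\dist(Q, w) \le q$ for every $w$ and completes the contradiction. There is no real obstacle to the plan; the only point to be careful about is that the distance estimate is tight in the residue $q \equiv 3 \pmod 4$ (where $q = 4m-5$ forces equality $\dist(u_i, w) = q$ at the worst case), and it is precisely this tightness that forces the choice $m = \lceil (q+3)/4 \rceil$ rather than anything smaller.
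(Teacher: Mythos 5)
Your proof is correct and follows essentially the same route as the paper: assume the inequality fails, take the evenly spaced set $Q=\{u_{1+mt}\}$, and bound $\dist(Q,w)\le 2(m-1)+(q-1)/2\le q$ using \eqref{eq:chunkDistance}, \Cref{cl:mediumDistance}, and $q\ge 4m-5$. The only difference is cosmetic (you index by the largest progression element below $j$ rather than by $t=\lfloor(j-1)/m\rfloor$), and your closing remark about tightness when $q\equiv 3\pmod 4$ is a correct observation.
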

		\begin{proof}
			Assume for contradiction that $\frac{2}{q+3}|V(D)|\ge \lceil \ell/m\rceil$.  Define \[Q=\{u_{1+mt}:0\le t\le \lceil \ell/m\rceil-1\}\sub U,\] noting that $|Q|=\lceil \ell/m\rceil\le\frac{2}{q+3}|V(D)|$.  
			
			We aim to show $Q$ is a $q$-kernel.  To this end, consider any $w\in V(D)$, say of type $j$.  Observe that there exists some integer $0\le t<\lceil \ell/m\rceil$ such that $1+mt\le j\le m(t+1)$ (namely $t= \lfloor (j-1)/m \rfloor$ works via observing $j-m\le mt\le j-1$).  This and \eqref{eq:chunkDistance} implies
			\[\dist(Q,w)\le \dist(u_{1+mt},u_j)+\dist(u_j,w)\le 2(j-(1+mt))+\ty(j)-1\le 2(m-1)+(q-1)/2,\]
			with this last step using $j\le m(t+1)$ and \Cref{cl:mediumDistance}.  Note that this quantity is at most $q$ since $q\ge -5+4m$.  Thus $Q\sub U$ is a small $q$-kernel for $D$, a contradiction.
		\end{proof}
		
		We will use the bound above to prove the following set of inequalities, where here and throughout we let $0\le a\le m-1$ be the integer such that $\ell\equiv a\mod m$.
		\begin{claim}\label{cl:typeA}
			For all $j$ we have
			\[\ty(j)\le q-2m-2a+5,\]
			and further
			\[\min_j \ty(j)\le q-2m-2a+3.\]
		\end{claim}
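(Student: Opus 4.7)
The plan is to derive both inequalities by combining \Cref{cl:smallD}'s upper bound $|V(D)| < \lceil\ell/m\rceil(q+3)/2$ with the universal lower bound $\ty(j) \ge 2$, valid since $u_j$ and $v_j$ are always vertices of type $j$. Write $\ell = km + a$; the case $a = 0$ will follow directly from \Cref{cl:mediumDistance}'s bound $\ty(j) \le (q+1)/2$ (one checks $(q+1)/2 \le q - 2m + 3$ in both $q = 4m - 3$ and $q = 4m - 5$), so I focus on $a \ge 1$, where $\lceil\ell/m\rceil = k+1$.

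For the first inequality, I would fix any $j_0$, let $T = \ty(j_0)$, and note that $|V(D)| = \sum_j \ty(j) \ge T + 2(\ell - 1)$ since each of the remaining $\ell - 1$ types contributes at least $2$. Combining with \Cref{cl:smallD} and solving for $T$ gives $T < (k+1)(q+3)/2 - 2\ell + 2$. Substituting $\ell = km + a$ and splitting into the subcases $q = 4m - 3$ and $q = 4m - 5$, the right-hand side telescopes in each case to a quantity at most $q - 2m - 2a + 6$, so the integer $T$ satisfies $T \le q - 2m - 2a + 5$ as claimed.

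For the second inequality, I would argue by contradiction. Suppose $\ty(j) \ge q - 2m - 2a + 4$ for every $j$, so that $|V(D)| \ge \ell(q - 2m - 2a + 4)$. Feeding this into \Cref{cl:smallD} yields the arithmetic inequality $\ell(q - 2m - 2a + 4) < (k+1)(q+3)/2$. Expanding with $\ell = km + a$, the coefficient of $k$ on the left strictly dominates the coefficient on the right, and the constant term at $k = 1$ is already strictly in our favor, so the inequality is impossible for every $k \ge 1$, giving the desired contradiction.

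The main obstacle is the degenerate case $q = 4m - 5$ with $a = m - 1$, in which the hypothesized bound $q - 2m - 2a + 4 = 1$ is actually weaker than the universal $\ty \ge 2$, so the summation argument provides no new information and the coefficient-of-$k$ comparison goes the wrong way. Here I would bypass the hypothesis of (ii) entirely and apply only the trivial estimate $|V(D)| \ge 2\ell = 2m(k+1) - 2$ against \Cref{cl:smallD}'s bound $|V(D)| < (k+1)(2m-1)$; subtracting these rearranges to $k + 1 < 2$, contradicting $k \ge 1$ outright. Hence no counterexample $D$ can exist in this regime and the claim is vacuously true there, completing the proof of both inequalities.
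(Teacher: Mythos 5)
Your argument is, at bottom, the same as the paper's: both proofs play the volume upper bound of \Cref{cl:smallD} against the lower bounds $\ty(j')\ge 2$ (for the first inequality) and $\ty(j)\ge q-2m-2a+4$ for all $j$ (for the second), and both isolate the degenerate case $q=4m-5$, $a=m-1$ and kill it with the trivial estimate $|V(D)|\ge 2\ell$. The paper packages the computation differently---it first derives the single intermediate bound $|V(D)|\le 2\ell+q-2m-2a+2$ together with the positivity $q-2m-2a+2>0$, then reads off both parts of the claim---whereas you substitute $\ell=km+a$ and compare coefficients of $k$ in each of the two subcases $q=4m-3$ and $q=4m-5$; these are the same estimates in different notation. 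Your separate treatment of $a=0$ via \Cref{cl:mediumDistance} is correct (and the paper's argument in fact covers $a=0$ uniformly).

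The one step you never justify is $k\ge 1$, and your proof of the second inequality (and of the degenerate case) genuinely leans on it: at $k=0$ the coefficient-of-$k$ comparison says nothing, and the $k=1$ check does not apply---for instance with $q=4m-3$, $k=0$, $a=1$ the inequality you want to contradict reads $1\cdot(2m-1)<2m$, which holds. A priori $k=0$ is possible from the stated hypotheses alone (e.g.\ $\ell=2$, $m=3$). The fix is exactly the paper's observation that $\ell\ge m+a$: the hypothesis $|V(D)|\ge (q+3)/2$ of \Cref{prop:unicyclic} combined with \Cref{cl:smallD} gives $1\le \frac{2}{q+3}|V(D)|<\lceil \ell/m\rceil$, hence $\lceil \ell/m\rceil\ge 2$, hence $\ell>m$ and $k\ge 1$. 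With that one line added, your proof is complete.
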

		\begin{proof}
			We begin by making a few observations about $|V(D)|$.  By \Cref{cl:smallD}, we have \[\frac{2}{q+3}|V(D)|<\lceil \ell/m\rceil\le \lfloor \ell/m\rfloor+1=(\ell+m-a)/m.\]
			Recall that $|V(D)|\ge (q+3)/2$ by hypothesis of the proposition, so the inequality above combined with $\ell\equiv a\mod m$ implies \[\ell \ge m+a.\]  Using these two inequalities gives
			\[|V(D)|<\frac{(\ell+m-a)(q+3)}{2m}=(2\ell+2m-2a)+\frac{(\ell+m-a)(q+3-4m)}{2m}\le (2\ell+2m-2a)+(q+3-4m),\]
			where this last step used $q+3-4m\le 0$ and $\ell\ge m+a$.  In total, this implies
			\begin{equation}|V(D)|\le 2\ell+q-2m-2a+2.\label{eq:smallD}\end{equation}
			We also note that 
			\[q-2m-2a+2>0.\]
			Indeed, recalling that either $q=-5+4m$ or $q=-3+4m$, we see that this immediately holds if either $a\le m-2$ or if $a=m-1$ and $q= -3+4m$, so the only potential issue is when $a=m-1$ and $q=-5+4m$.  In this case, \eqref{eq:smallD} would imply $|V(D)|\le 2\ell-1$, a contradiction to having $|V(D)|\ge 2\ell$ by virtue of $D$ containing a directed $2\ell$-cycle.
			
			Returning to the proof of the claim; if there existed  some $j$ with $\ty(j)\ge q-2m-2a+6$, then this together with the trivial bound $\ty(j')\ge 2$ for all $j'$ (coming from $u_j,v_j$) would imply
			\[|V(D)|=\sum \ty(j')\ge 2(\ell-1)+q-2m-2a+6=2\ell+q-2m-2a+4,\]
			a contradiction to \eqref{eq:smallD}.  This proves the first part of the claim.
			
			Similarly, if we had $\ty(j)\ge q-2m-2a+4$ for all $j$, then this would imply
			\begin{align*}|V(D)|&\ge \ell(q-2m-2a+4)=2\ell+\ell(q-2m-2a+2)\\ &=2\ell+q-2m-2a+2+(\ell-1)(q-2m-2a+2)>2\ell+q-2m-2a+2,\end{align*}
			where this last step used $q-2m-2a+2>0$ and $\ell\ge  2$.  This contradicts \eqref{eq:smallD}, so we conclude the result.
		\end{proof}
		
		With these inequalities and the definition of $a$ in mind, we assume without loss of generality that $\ty(\ell)=\min_j \ty(j)$ and define  \[Q=\{u_{1+mt}:0\le t\le \lfloor\ell/m\rfloor-1\}=\{u_{1+mt}:0\le t\le (\ell-a)/m-1\}\sub U.\]  Note that \[|Q|=\lfloor \ell/m\rfloor\le \frac{1}{2m}\cdot 2\ell\le  \frac{2}{q+3}\cdot 2\ell \le \frac{2}{q+3}|V(D)|,\]
		where this first inequality used that $\frac{2}{q+3}$ either equals $\frac{1}{2m}$ if $q=-3+4m$ or $\frac{1}{2m-1}\ge \frac{1}{2m}$ if $q=-5+4m$.  Note that this inequality together with \Cref{cl:smallD} implies $a\ne 0$ (i.e. that $\lfloor \ell/m\rfloor\ne \lceil \ell/m\rceil$).
		
		We aim to show that $Q$ is a $q$-kernel.  First, we claim that if $w\in V(D)$ is of type $j<\ell$, then there exists some $0\le t\le (\ell-a)/m-1$ such that $1+mt\le j\le a-1+m(t+1)$.  Indeed, if $j\le \ell-a$ then one can take $t= \lfloor (j-1)/m \rfloor\le (\ell-a)/m-1$, and as in the proof of \Cref{cl:smallD} this satisfies  \[1+mt\le j\le m(t+1)\le a-1+m(t+1).\]  If instead $\ell-a<j<\ell$ then we take $t=(\ell-a)/m-1$, in which case \[a-1+m(t+1)=\ell-1\ge j\ge 1+mt.\]  Using this together with \eqref{eq:chunkDistance}  and \Cref{cl:typeA}  gives
		\begin{align*}\dist(Q,w)&\le \dist(u_{1+mt},u_j)+\dist(u_j,w)\\ &\le 2(j-(1+mt))+\ty(j)-1\\ &\le 2(m+a-2)+(q-2m-2a+5)-1= q,\end{align*}
		where this last inequality used $j\le a-1+m(t+1)$.  Similarly if $w$ has type $\ell$ then we have
		\begin{align*}\dist(Q,w)&\le \dist(u_{1+\ell-a-m},u_\ell)+\dist(u_\ell,w)\\ &\le 2(\ell-(1+\ell-a-m))+\ty(\ell)-1\\ &\le 2(m+a-1)+(q-2m-2a+3)-1 =q.\end{align*}
		We conclude that $Q$ is a small $q$-kernel for $D$, a contradiction.  We conclude that no counterexample $D$ exists, proving the result.
	\end{proof}
	With \Cref{prop:unicyclic} in hand we can prove \Cref{thm:bipartite} in the special case when $q$ is odd and $\ell=(q+3)/2$.
	
	\begin{thm}\label{thm:bipartiteSpecial}
		Let $D$ be a source-free digraph with kernel $K$.  If $q\ge 3$ is odd and every directed cycle of $D$ of even length has length at least $(q+3)/2$, then $D$ contains a $q$-kernel $Q$ of size at most $\frac{2}{q+3}|V(D)|$ with $Q\sub K$.
	\end{thm}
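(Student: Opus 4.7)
The plan is to reduce the general bipartite case to the unicyclic case handled by \Cref{prop:unicyclic}. Since $D$ is source-free, every vertex $v\in V(D)$ has at least one in-neighbor; pick one such in-neighbor $p(v)$ arbitrarily, and let $D'$ be the spanning subdigraph of $D$ with arc set $\{p(v)v : v\in V(D)\}$. Then every vertex of $D'$ has in-degree exactly $1$, so each weakly connected component of $D'$ is unicyclic in the sense of \Cref{lem:unicyclic}.

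I would then verify that each unicyclic component $D'_i$ of $D'$ satisfies all hypotheses of \Cref{prop:unicyclic}. It inherits a bipartition from $D$ by setting $U_i = U\cap V(D'_i)$ and $V_i = V\cap V(D'_i)$. Its unique directed cycle is also a directed cycle of $D$, hence has length at least $(q+3)/2$ by hypothesis; since $D$ is bipartite this length is some even integer $2\ell$, and the inequality $2\ell \ge (q+3)/2 \ge 3$ forces $2\ell\ge 4$, i.e.\ $\ell\ge 2$. Finally $|V(D'_i)|\ge 2\ell \ge (q+3)/2$, so the size hypothesis of \Cref{prop:unicyclic} holds as well.

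Applying \Cref{prop:unicyclic} to each $D'_i$ yields a $q$-kernel $Q_i\sub U_i$ of $D'_i$ with $|Q_i|\le \frac{2}{q+3}|V(D'_i)|$. Setting $Q = \bigcup_i Q_i$, summing the size bounds gives $|Q|\le \frac{2}{q+3}|V(D)|$ and $Q\sub U$ by construction. Independence of $Q$ in $D$ is automatic since $Q\sub U$ and the only arcs of $D$ go between $U$ and $V$. For the distance condition, any $v\in V(D)$ lies in some component $D'_i$, and the $q$-kernel property of $Q_i$ inside $D'_i$ provides a directed walk of length at most $q$ from $Q_i$ to $v$ in $D'_i$; since $D'\sub D$, this yields $\dist_D(Q,v)\le q$, showing $Q$ is a $q$-kernel.

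There is no real obstacle here: the only mild subtlety is checking that the cycle-length hypothesis on $D$, combined with bipartiteness, is enough to force the cycle of each unicyclic component $D'_i$ to satisfy both $2\ell\ge 4$ and $|V(D'_i)|\ge (q+3)/2$. Both follow from the trivial observation that a directed cycle in a bipartite digraph must have even length, and everything else reduces to bookkeeping once \Cref{prop:unicyclic} is available.
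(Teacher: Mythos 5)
Your proposal is correct and matches the paper's proof essentially verbatim: both delete arcs so that every vertex has in-degree exactly $1$ (your "pick one in-neighbor $p(v)$" is the same construction), decompose into unicyclic components, verify the cycle-length and size hypotheses using bipartiteness, apply \Cref{prop:unicyclic} componentwise, and take the union. No differences worth noting.
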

	\begin{proof}
		Let $D,q$ be as in the hypothesis of the theorem.  Observe that if we can find some $Q\sub K$ which is a $q$-kernel for some spanning subgraph $D'\sub D$, then $Q$ will still be a $q$-kernel of $D$ since $K$ is independent in $D$ (and so will $Q$).  With this in mind, we first define $\tilde{D}'\sub D$ by deleting every arc which does not use a vertex of $K$, noting that this digraph continues to be source-free since $K$ is a kernel (which means every vertex outside of $K$ still has an in-neighbor from $K$, and every vertex inside $K$ still has its previous incoming arcs).  We then define a subdigraph $D'\sub \tilde{D}'$ by iteratively deleting arcs from $\tilde{D}'$ until every vertex has in-degree exactly 1 (and this can be done since $\tilde{D}'$ is source-free).

		We can write $D'$ as the disjoint union of digraphs $D_1\cup \cdots \cup D_t$ where the underlying graph of each $D_i$ is connected, and we observe that each $D_i$ is unicyclic and bipartite with bipartition $U_i\cup V_i$ defined by $U_i=V(D_i)\cap K$ and $V_i=V(D_i)\sm K$.  Moreover, since every directed cycle of $D$ of even length
		 has length at least $(q+3)/2$, the (even lengthed) directed cycle in each $D_i$ has length at least $ (q+3)/2$.  This implies $|V(D_i)|\ge (q+3)/2$ for all $i$, and that the length of the cycle in each $D_i$ is at least 4 (since this cycle length is an even integer that is at least $(q+3)/2\ge 3$).  We can thus apply \Cref{prop:unicyclic} to obtain $q$-kernels $Q_i$ for each $D_i$ with $|Q_i|\le \frac{2}{q+3}|V(D_i)|$ and $Q_i\sub U_i$.
		
		Let $Q=\bigcup Q_i$.  Because $Q_i\sub  U_i\sub  K$ for all $i$, we have $Q\sub K$ (and hence is an independent set).  We also have $|Q|\le \frac{2}{q+3}|V(D)|$ (since $|Q_i|\le \frac{2}{q+3}|V(D_i)|$ for all $i$)  and that $\dist_D(Q,x)\le q$ for all $x\in V(D)$ (since $x\in V(D_i)$ for some $i$ and $\dist_D(Q,x)\le \dist_{D_i}(Q_i,x)\le q$).  We conclude that $Q$ is a $q$-kernel of the desired size of $D'\sub D$ and hence of $D$, completing the proof.
	\end{proof}
	We now use this to complete the proof of our main result.
	\begin{proof}[Proof of \Cref{thm:bipartite}]
		Recall that we wish to show that if $D$ is a source-free digraph which has a kernel $K$ and if $q,\ell\ge3$ are integers such that $\ell\le (q+3)/2$ and such that every directed cycle of $D$ of even length has length at least $\ell$, then $D$ contains a $q$-kernel $Q$ of size at most $\frac{1}{\ell}|V(D)|$ with $Q\sub K$.
		
		By the hypothesis of the theorem, we can apply \Cref{thm:bipartiteSpecial} with $q'=2\ell-3\ge 3$ to obtain a $q'$-kernel $Q$ of $D$ with size at most $\frac{2}{q'+3}|V(D)|=\frac{1}{\ell}|V(D)|$ that lies in $K$.  Note that $Q$ is also a $q$-kernel since $q\ge q'=2\ell-3$ by hypothesis, proving the result.
	\end{proof}
	One can extend \Cref{thm:bipartite} somewhat with some (non-trivial) extra work.  To this end, given a digraph $D$, we say that a partition $V_1\cup \cdots V_r$ of $V(D)$ is an \textit{$r$-cyclic partition} if every arc $uv\in E(D)$ has $u\in V_i$ and $v\in V_{i+1}$ for some $1\le i\le r$ (where here and throughout we write our indices mod $r$).  By adapting our methods we claim without proof that we can establish the following generalization of \Cref{thm:bipartite}.
	\begin{thm}\label{thm:rCyclic}
		Let $D$ be a source-free bipartite digraph with $r$-cyclic partition $V_1\cup \cdots \cup V_r$.  If $q\ge r^2-1$ and $\ell\ge (r^2+r)/2$ are integers such that $\ell\le (q+1+r)/2$, $\ell\equiv 0\mod r$, and such that every directed cycle of $D$ has length at least $\ell$, then $D$ contains a $q$-kernel $Q$ of size at most $\frac{1}{\ell}|V(D)|$ with $Q\sub V_1$.
	\end{thm}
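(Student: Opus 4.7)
The strategy parallels the proof of \Cref{thm:bipartite}, adapted to the $r$-cyclic setting. The plan is to first reduce to the unicyclic case by iteratively deleting arcs until every vertex has in-degree exactly $1$ (possible since $D$ is source-free), which splits $D$ into connected unicyclic components. Because the $r$-cyclic partition is preserved, the unique directed cycle in each component has length divisible by $r$ and at least $\ell$. Then prove a unicyclic analog of \Cref{prop:unicyclic} in the critical case $q = 2rL-1-r$, where $rL$ is the cycle length of the unicyclic digraph, yielding a $q$-kernel $Q \subseteq V_1$ of size at most $\frac{1}{rL}|V(D)|$. Finally, derive the general statement by applying this with $q' = 2\ell-1-r$ (so $\frac{2}{q'+1+r} = \frac{1}{\ell}$) and noting that a $q'$-kernel is also a $q$-kernel whenever $q \ge q'$; the hypothesis $\ell \le (q+1+r)/2$ is exactly what guarantees $q \ge q'$.

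For the unicyclic analog, label the cycle vertices $u_j^{(i)} \in V_i$ for $1 \le j \le L$, $1 \le i \le r$, with arcs $u_j^{(i)} u_j^{(i+1)}$ and $u_j^{(r)} u_{j+1}^{(1)}$ (indices taken cyclically). For each $w \in V(D)$, let $P_w$ be the unique directed path from $C$ to $w$ guaranteed by \Cref{lem:unicyclic}. Say $w$ has \emph{type $j$} if $P_w$ begins at $u_j^{(i)}$ for some $i$, and let $\ty(j)$ denote the number of type-$j$ vertices. The analog of \eqref{eq:chunkDistance} is $\dist(u_j^{(1)}, w) \le \ty(j) - 1$ for every type-$j$ vertex $w$: from $u_j^{(1)}$ one traverses the cycle segment $u_j^{(1)} \to u_j^{(2)} \to \cdots$ within type $j$ and then descends into $P_w$, visiting only type-$j$ vertices.

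The remainder follows the four-claim template of \Cref{prop:unicyclic}. Assuming a minimal counterexample, one shows: (i) $\dist(C,w) < q$ for every $w$, proved by contracting a ball of radius $(q+1)/2$ around a vertex $w' \in P_w$ with $\dist(w',w) = (q+1)/2$, applying minimality, then extending the kernel by a cycle vertex of $V_1$ within $r-1$ steps of $w'$; (ii) $\ty(j) < \ell = (q+1+r)/2$ for all $j$, proved by deleting all type-$j$ vertices and adding a bypass arc path from $u_{j-1}^{(r)}$ to $u_{j+1}^{(1)}$ through $r-1$ fresh auxiliary structure (or equivalently shortening the cycle while respecting the $r$-cyclic partition); (iii) $\frac{2}{q+1+r}|V(D)| < \lceil L/m \rceil$ for $m = \lceil (q+1+r)/(2r) \rceil$, by testing the candidate $Q = \{u_{1+mt}^{(1)} : 0 \le t \le \lceil L/m \rceil - 1\}$ and bounding $\dist(Q,w) \le r(m-1) + \ty(j) - 1$; (iv) refined bounds on $\max_j \ty(j)$ and $\min_j \ty(j)$ in terms of $q$, $r$, $m$, and $a := L \bmod m$, analogous to \Cref{cl:typeA}. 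The construction of the final $Q$ is again $\{u_{1+mt}^{(1)} : 0 \le t \le \lfloor L/m \rfloor - 1\} \subseteq V_1$, with the distance bound $\dist(Q,w) \le r(j-1-mt) + \ty(j) - 1 \le q$ verified by combining (iv) with the observation that each $u_j^{(1)}$ lies within $r(m+a-1)$ cycle-steps of some chosen $u_{1+mt}^{(1)}$.

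The main obstacle will be bookkeeping the constants. The hypothesis $q \ge r^2 - 1$ and $\ell \ge (r^2+r)/2$ are precisely what is needed so that (a) the chunk width $m$ is at least $2$, (b) the analog of the key inequality $q - 2m - 2a + 2 > 0$ from the proof of \Cref{cl:typeA} continues to hold (now with the $r$-dependent replacement $q - rm - (r-1)a + \mathrm{const} > 0$), and (c) the distance bound $r(m+a-1) + \max_j \ty(j) - 1 \le q$ closes. A subtler point is that when contracting subtrees or chunks in claims (i) and (ii), one must maintain the $r$-cyclic partition on the contracted digraph and verify that the cycle length remains a multiple of $r$ and still at least the threshold for the inductive hypothesis; this is why the bypass in claim (ii) must span exactly the $r$ positions of a removed chunk, not a single arc as in the bipartite case. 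Once these calculations close, the rest of the proof is bookkeeping identical in spirit to the $r = 2$ case.
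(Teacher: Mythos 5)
Your proposal follows essentially the same route as the paper's (sketched) argument: reduce to unicyclic components by deleting arcs until every in-degree is $1$, define types relative to the $r$-blocks of the cycle, run the four-claim template of \Cref{prop:unicyclic} with the chunk parameter $m$, and derive \Cref{thm:rCyclic} from the special value $q'=2\ell-r-1$.

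There is, however, one concrete point where your step (iii) fails as written, and it is exactly the point the paper singles out as where ``things get more complicated.'' Your distance bound for the naive selection $Q=\{u^{(1)}_{1+mt}\}$ is $\dist(Q,w)\le r(m-1)+\ty(j)-1$, and the only bound on $\ty(j)$ available at that stage is $\ty(j)<(q+1+r)/2$. These combine to give $\dist(Q,w)\le q$ only when $q\ge 2rm-r-3$; writing $q\equiv-1\pmod r$ as either $q=2rm-r-1$ or $q=2rm-2r-1$, the inequality holds in the first case but fails by $r/2-1$ in the second whenever $r\ge 3$ (for $r=2$ both cases are fine, which is why you never see this in \Cref{cl:smallD}). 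The paper's sketch of \Cref{thm:rCyclicSpecial} repairs this by augmenting $Q$ with one extra $V_1$-vertex from each ``heavy'' chunk indexed by a set $B'$, where an averaging argument over the choice of starting label guarantees $|B'|\le\lfloor|B|/m\rfloor$, followed by a separate counting argument showing the enlarged $Q$ still has size at most $\frac{2}{q+1+r}|V(D)|$. Your proposal contains no analogue of this. You can escape the problem for \Cref{thm:rCyclic} itself by observing that the reduction value $q'=2\ell-r-1$ with $r\mid\ell$ always lands in the good residue class ($m=\ell/r$ and $q'=2rm-r-1$), so the naive selection does suffice there---but you never make this observation, and your phrase ``the critical case $q=2rL-1-r$ where $rL$ is the cycle length'' conflates the girth bound $\ell$ (which fixes $q'$ once and for all) with the cycle lengths of the individual unicyclic components (which vary, and shrink during the induction in your claim (ii)). Either restrict the unicyclic proposition to $q\equiv-1-r\pmod{2r}$ and justify why that covers the reduction, or add the heavy-chunk correction.
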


	As \Cref{thm:rCyclic} is not our focus, we defer  discussing the tightness of its parameters and the details of its proof to the Appendix.
	
\section{Concluding Remarks}\label{sec:con}
	In this section we outline a number of remaining problems involving $q$-kernels.
	
	\subsection{Disjoint $q$-kernels}
	
	Recall that \Cref{thm:disjointWeak} guarantees the existence of $r$ disjoint $q$-kernels with $q\le 2^{r+1}$ provided $D$ contains no $(r-1)$-source sets.  The best construction we know of is $D=C_{2r-1}$, which requires $q\ge 2r-2$  in order to have $r$-disjoint $q$-kernels. We suspect that this lower bound is much closer to the truth compared to the upper bound.
	\begin{conj}\label{conj:disjoint}
		There exists $\ep,r_0>0$ such that if $D$ contains no $(r-1)$-source sets with $r\ge r_0$, then $D$ contains $r$-disjoint $q$-kernels with $q\le (2-\ep)^r$.
	\end{conj}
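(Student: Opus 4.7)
The plan is to improve the recursion $\beta^{(r)}\approx 2\beta^{(r-1)}+O(r)$ behind \Cref{thm:disjointFull} into one of the form $\beta^{(r)}\le (2-\ep)\beta^{(r-1)}+O(r)$, which would solve to $\beta^{(r)}=O((2-\ep)^r)$ as \Cref{conj:disjoint} requests.  The doubling factor in the current recursion comes entirely from \Cref{prop:noStructures}: contracting through an independent set $Q$ produces an auxiliary digraph $D^*$ on $V(D)\sm Q$ whose arcs are either real arcs of $D$ (``weight $1$'') or witness a length-$2$ path $u\to v\to w$ with $v\in Q$ (``weight $2$''), and a $\beta$-kernel in $D^*$ translates to a $(2\beta+1)$-kernel in $D$ by charging every contracted arc as weight $2$.

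The cleanest reformulation would be to work with weighted distance in $D^*$ directly: if each $Q_i$ is a \emph{weighted} $\alpha_i$-kernel of $D^*$, then it is automatically an $\alpha_i$-kernel of $D$ with no extra loss.  So the task reduces to finding $r$ disjoint weighted $\alpha_i$-kernels in $D^*$ with $\alpha_i\le (2-\ep)\beta_i^{(r-1)}+O(r)$.  Such a bound would follow from an amortized analysis showing that weighted shortest paths in $D^*$ from $Q_i$ have average arc weight at most $2-\ep$, rather than the worst-case weight $2$.

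The key step would be to choose the removed independent set $Q$ strategically---probabilistically, greedily, or via a fractional/LP relaxation---so that relatively few arcs of $D^*$ are contracted.  Since the total number of contracted arcs is at most $\sum_{v\in Q}\deg^-(v)\deg^+(v)$, a $Q$ with low total bilinear degree will yield a $D^*$ dominated by weight-$1$ arcs, forcing weighted shortest paths to pass through mostly real arcs of $D$.  Combining this with \Cref{lem:D'} to handle the pseudo-source structure should then give the required $(2-\ep)$-savings per recursive level.

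The main obstacle is that contracted arcs act as \emph{shortcuts} in $D^*$, so adversarially chosen shortest paths to specific target vertices will prefer them; saving a factor of $2-\ep$ on every shortest path simultaneously is the crux of the difficulty.  A successful attack likely requires a structural dichotomy: either the digraph has enough ``real'' connectivity that some good $Q$ can be found, or it has a restrictive structure (for example a dense cluster of short cycles near a small vertex cut) that itself forces small $q$-kernels by a direct argument, perhaps by iterating \Cref{lem:D'} so that successive contractions have cumulative cost less than $2$.  Controlling $\ep$ uniformly across all recursion levels and across the indices $i$ of the $\beta_i^{(r)}$'s will be an additional subtle quantitative issue.
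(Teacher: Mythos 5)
What you are addressing is \Cref{conj:disjoint}, which is an \emph{open conjecture} in the paper: the author explicitly remarks that the barrier is \Cref{prop:noStructures} (the ``squaring'' step) and that ``significant new ideas would be needed'' to get below the growth rate $2^r$. Your proposal correctly diagnoses the same bottleneck and reformulates it in a reasonable way (weighted arcs of cost $1$ or $2$ in the contracted digraph $D^*$), but it does not constitute a proof. Every step that would actually deliver the $(2-\ep)$ savings is left as a hope: you do not exhibit a choice of $Q$ with the required properties, you do not prove the amortized bound that shortest paths have average arc weight at most $2-\ep$, and you do not establish the structural dichotomy you invoke in the final paragraph. You even name the fatal issue yourself --- contracted arcs are shortcuts, so a shortest path from $Q_i$ to an adversarial target may consist entirely of weight-$2$ arcs --- and then defer its resolution to a ``successful attack'' that is never carried out.

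Two further concrete gaps. First, in the recursion behind \Cref{thm:disjointFull} the removed independent set $Q$ is not a free parameter: it is itself one of the $r$ disjoint kernels (it is taken to be a $2$-kernel of the modified digraph), so you cannot simply optimize $Q$ for low ``bilinear degree'' $\sum_{v\in Q}\deg^-(v)\deg^+(v)$ without simultaneously guaranteeing that $Q$ is a $q$-kernel with $q$ small; these two requirements can conflict. Second, even granting a per-level saving, you would need the saving to be uniform: if level $r$ only yields a factor $2-\ep_r$ with $\ep_r\to 0$, the product over levels need not be $O((2-\ep)^r)$ for any fixed $\ep>0$. You flag this as a ``subtle quantitative issue'' but do not address it. In short, this is a plausible research program aligned with the paper's own assessment of where the difficulty lies, not a proof of the conjecture.
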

	The main barrier for proving \Cref{conj:disjoint} is \Cref{prop:noStructures}, which roughly involves replacing $D$ by its ``square'' and is the primary source of the growth rate of $2^r$.  As \Cref{prop:noStructures} is the heart of our present argument, significant new ideas would be needed in order to prove \Cref{conj:disjoint}.  

	\subsection{Small $q$-kernels}
	We next turn to questions involving small $q$-kernels in source-free digraphs with certain restrictions.
	\subsubsection{Restricted Cycle Lengths}
	\Cref{quest:smallCycles} involving digraphs with restricted cycle lengths seems too difficult to solve in general, but many interesting problems remain even in the case when $D$ is bipartite.  For example, \Cref{thm:bipartite} gives optimal bounds when $D$ is a bipartite digraph of an appropriate girth.  It would be interesting to extend this result to all bipartite $D$.  One promising approach for odd $q$ would be the following, where here we recall a digraph is unicyclic if every vertex has in-degree 1 and if the underlying graph is connected. 
	
	\begin{quest}\label{quest:unicyclic}
		Let $D$ be a unicyclic digraph with directed cycle of length $2\ell$ and bipartition $U\cup V$.  If $q\ge 3$ is odd, do there exist $q$-kernels $Q_U\sub U$ and $Q_V\sub V$ such that
		\[|Q_U|+|Q_V|\le 2\cdot\frac{\lceil \ell/(q+1) \rceil}{\ell}\cdot |V(D)|?\]
	\end{quest}
	As mentioned in the introduction, it is not true in general that we can always guarantee e.g.\ $|Q_U|\le \frac{\lceil \ell/(q+1) \rceil}{\ell}\cdot |V(D)|$, but \Cref{quest:unicyclic} suggests that on average at least one of the sets $Q_U$ or $Q_V$ will meet this bound if $q$ is odd.  If this were true, then one could use the same reduction from the proof of \Cref{thm:bipartite} to give a positive answer to \Cref{quest:smallCycles} whenever $q$ is odd and $D$ is bipartite (with the additional conclusion that $Q$ is contained in either $U$ or $V$, though we can not specify which set it will lie in ahead of time).
	
	The situation is more complex for bipartite $D$ when $q$ is even, as we can not in general find small $q$-kernels contained in either $U$ or $V$.  For $q=2$ we managed to prove the weak bound $|Q|<\half |V(D)|$ whenever $D$ (in particular) avoids directed 2-cycles and 4-cycles.   We suspect this can be improved substantially.
	\begin{conj}\label{conj:quasiGirth}
		There exists an $\ep>0$ such that if $D$ is a source-free bipartite digraph with no directed 2-cycles or 4-cycles, then $D$ contains a quasikernel $Q$ with $|Q|\le\left(\frac{1}{2}-\ep\right)|V(D)|$.
	\end{conj}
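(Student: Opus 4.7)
The plan is to work with the bipartite structure directly, reducing to a weighted bipartite domination problem and combining a probabilistic construction with a structural dichotomy. We may assume $|U|=|V|=n/2$ throughout; if one side is strictly smaller, taking $Q$ to be that side is already a quasikernel by source-freeness, but this only shaves $O(1)$, so the nontrivial case is equality. The first key observation is that for $Q\sub U$, source-freeness ensures $Q$ is a quasikernel iff $N_D^+(Q)\supseteq V$: any $u\in U\sm Q$ has an in-neighbor $v\in V$, and if $N_D^+(Q)=V$ there is automatically a length-$2$ walk $q\to v\to u$. So $\gamma^+:=\min\{|Q|:Q\sub U,\ N_D^+(Q)\supseteq V\}$ is the minimum quasikernel size in $U$, and symmetrically $\gamma^-$ is obtained by swapping roles, and the goal reduces to $\min(\gamma^+,\gamma^-)\le(1/2-\ep)n$.

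For the probabilistic step, let $U_1=\{u\in U:\exists v\in V,\ N_D^-(v)=\{u\}\}$ (the forced vertices in any $U$-quasikernel) and $V_{\geq 2}=\{v\in V:|N_D^-(v)|\ge 2\}$. Include $U_1$ deterministically and each $u\in U\sm U_1$ independently with probability $p=1/2$, then add one in-neighbor per uncovered $v\in V_{\geq 2}$. Each such $v$ is uncovered with probability at most $(1-p)^2$; combined with $|U_1|\le|V_1|=n/2-|V_{\geq 2}|$, a direct calculation yields
\[\gamma^+\le \E|Q|\le n/2 - p(1-p)|V_{\geq 2}| = n/2-|V_{\geq 2}|/4,\]
and symmetrically $\gamma^-\le n/2-|U_{\geq 2}^-|/4$, where $U_{\geq 2}^-:=\{u\in U:|N_D^-(u)|\ge 2\}$. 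Hence if $\max(|V_{\geq 2}|,|U_{\geq 2}^-|)\ge 4\ep n$ we are done.

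It remains to treat the ``near-matching'' regime, where $|V_{\geq 2}|,|U_{\geq 2}^-|<4\ep n$. Here almost every $v\in V$ has a unique in-neighbor $\phi(v)\in U$ and almost every $u\in U$ has a unique in-neighbor $\psi(u)\in V$. I would study the functional digraph $F$ on $V\cup U$ defined by $f(v)=\phi(v)$ on $V_1$ and $f(u)=\psi(u)$ on $U_1^-$. A direct check shows that a cycle of length $k$ in $F$ corresponds to a directed $k$-cycle in $D$ via the induced back-arcs, so no $2$- or $4$-cycle in $D$ implies every cycle of $F$ has length $\ge 6$. In any directed $2k$-cycle with $k\ge 3$, a quasikernel of size $\lceil 2k/3\rceil$ is straightforward (take every third vertex around the cycle), giving a per-cycle ratio of at most $2/5$. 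If the cycles of $F$ cover $n-O(\ep n)$ vertices of $V(D)$, assembling these per-cycle quasikernels and adding $O(\ep n)$ further vertices to handle the leftover yields a quasikernel of total size at most $(2/5)n+O(\ep n)$, well below $(1/2-\ep)n$ for $\ep$ small enough.

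The hard part will be bounding the number of ``tail'' vertices in $F$ that iterate under $f$ to an endpoint in $V_{\geq 2}\cup U_{\geq 2}^-$ rather than entering a cycle. A priori, $|V_{\geq 2}|+|U_{\geq 2}^-|<8\ep n$ gives no control, since a single endpoint $v_0\in V_{\geq 2}$ could be the $f$-ancestor of many vertices: every $u\in N_D^+(v_0)$ with $N_D^-(u)=\{v_0\}$ contributes an edge $u\to v_0$ of $F$, and each such $u$ drags along its own $f$-preimage tree. Overcoming this seems to require either using no-$4$-cycle more aggressively to bound tail lengths (a long tail is a long directed path in $D$, which together with the near-matching structure might force a forbidden short cycle), or abandoning the cycle decomposition in favor of constructing a small quasikernel directly from the tail structure, possibly via mixed quasikernels mixing $U$ and $V$ vertices or a minimum-counterexample argument in the style of \Cref{prop:unicyclic}. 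This step is where I expect a genuinely new technical idea would be required.
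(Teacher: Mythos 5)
First, a point of orientation: the statement you are proving is \Cref{conj:quasiGirth}, which the paper leaves \emph{open}. The paper only establishes the much weaker \Cref{prop:smallQuasiBipartite} (saving a single vertex over the trivial bound $\frac{1}{2}|V(D)|$, via a soft argument about out-neighbors of in-degree $1$), so there is no proof in the paper to compare against and your attempt must be judged on its own.

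The portion you have completed is correct and already goes beyond what the paper proves. The equivalence ``$Q\sub U$ is a quasikernel iff $N^+(Q)\supseteq V$'' is right (a length-$2$ path from $U$ ends in $U$, so vertices of $V$ must be dominated directly), and the randomized construction is sound: writing $V_1=V\sm V_{\ge 2}$ and using $|U_1|\le |V_1|$, one gets $\E|Q|\le \frac{1}{2}|U|+\frac{1}{2}|U_1|+\frac{1}{4}|V_{\ge2}|\le \frac{n}{2}-\frac{1}{4}|V_{\ge2}|$. Note this computation does not require $|U|=|V|$, which is fortunate, because your opening reduction to the balanced case is not legitimate as stated: if $|U|=n/2-1$, taking $Q=U$ gives $n/2-1$, not $(\frac{1}{2}-\ep)n$. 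In any case, the regime $\max(|V_{\ge2}|,|U^-_{\ge2}|)\ge 4\ep n$ is genuinely disposed of.

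The gap is the near-matching regime, and it is not a technicality --- it is the entire problem. When $V_{\ge2}=U^-_{\ge2}=\emptyset$, every vertex has in-degree exactly $1$ and $D$ is a disjoint union of the paper's ``unicyclic'' digraphs; this is precisely the case the paper isolates in \Cref{quest:unicyclic} and only knows how to handle for odd $q\ge3$ (\Cref{prop:unicyclic}), not for $q=2$. Your hypothesis that ``the cycles of $F$ cover $n-O(\ep n)$ vertices'' cannot be arranged: take $C_6$ with a directed path on $m$ vertices attached to one of its vertices --- all in-degrees equal $1$, there are no directed $2$- or $4$-cycles, yet the unique cycle of $F$ covers only $6$ of the $6+m$ vertices. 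The same example refutes your suggested repair that a long tail forces a forbidden short cycle. Tails in isolation are easy (every third vertex along a directed path of in-degree-$1$ vertices works), but the selected vertices then mix $U$ and $V$, and keeping the union independent and within distance $2$ of everything at the junctions --- where tails attach to cycles, and where up to $8\ep n$ high-in-degree roots can each carry an entire $f$-preimage forest --- is exactly the unsolved difficulty. You identify this honestly, but as it stands the argument is incomplete at its essential step.
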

	The optimal possible value here would be $\ep=1/10$ due to $C_{10}$.  We note that \Cref{quest:smallCycles} predicts that \Cref{conj:quasiGirth} should hold even without the hypothesis that $Q$ is bipartite, but this seems comparable in difficulty to the Small Quasikernel Conjecture itself.
	
	\subsubsection{Variants of Source-Free Digraphs}
	Source-free digraphs are exactly digraphs with minimum in-degree 1.  As such, the following is a natural followup question to the Small Quasikernel Conjecture.
	\begin{quest}\label{quest:minDegree}
		For integers $\delta \ge 1$ and $q\ge 2$, what is the smallest constant $c_{\delta,q}$ such that every digraph $D$ with minimum in-degree $\delta$ contains a $q$-kernel $Q$ with $|Q|\le c_{\delta,q}|V(D)|$?
	\end{quest}
	For example, the Small Quasikernel Conjecutre is equivalent to the statement $c_{1,2}=\half $.  The full extent of our knowledge around this question is as follows.
	\begin{prop}\label{prop:minDegree}
		Let $c_{\delta,q}$ be defined as above.
		\begin{itemize}
			\item[(a)] For all $\delta\ge 1$ and $q\ge 2$, we have $c_{\delta,q}\ge \frac{1}{\delta+1}$.
			\item[(b)] If $q\ge 2^{\delta+2}$, then $c_{\delta,q}= \frac{1}{\delta+1}$.
			\item[(c)] For all $\delta\ge 1$, we have $c_{\delta,2}\ge \frac{1}{2}$.
		\end{itemize}
	\end{prop}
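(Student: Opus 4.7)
My plan is to treat the three parts separately: the two lower bounds in (a) and (b) come from explicit constructions, while the matching upper bound in the middle part is a direct application of \Cref{thm:disjointWeak} together with pigeonhole.

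For the lower bound $c_{\delta,q}\ge \frac{1}{\delta+1}$, I would take $D$ to be a disjoint union of copies of the complete symmetric digraph $K^*_{\delta+1}$ on $\delta+1$ vertices (i.e.\ include both arcs for every unordered pair). Every vertex has in-degree exactly $\delta$. Because each component is a symmetric tournament, any independent set intersects each component in at most one vertex, while any $q$-kernel must intersect every component (as distinct components are mutually unreachable). Hence every $q$-kernel has size at least the number of components, namely $\frac{|V(D)|}{\delta+1}$, which is tight by picking a single vertex from each component to form a $1$-kernel.

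For the matching upper bound when $q\ge 2^{\delta+2}$, the key observation is that minimum in-degree $\delta$ forces every non-empty source set to have size at least $\delta+1$: if $N^-_D(S)=\emptyset$, then every $v\in S$ has $N^-_D(v)\sub S$ and $|N^-_D(v)|\ge \delta$, so $|S|\ge \delta+1$. Hence $D$ contains no $\delta$-source sets, and applying \Cref{thm:disjointWeak} with $r=\delta+1$ produces $\delta+1$ pairwise disjoint $2^{\delta+2}$-kernels; by pigeonhole at least one of them has size at most $\frac{|V(D)|}{\delta+1}$, and since every $q'$-kernel is also a $q$-kernel for $q\ge q'$, this gives the required bound for all $q\ge 2^{\delta+2}$.

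For the bound $c_{\delta,2}\ge \frac{1}{3}$ when $\delta\ge 2$, I would use the cyclic blow-up of $C_3$: let $D$ have vertex set $A\sqcup B\sqcup C$ with $|A|=|B|=|C|=\delta$, and include every arc from $A$ to $B$, from $B$ to $C$, and from $C$ to $A$. Every vertex has in-degree exactly $\delta$. Since every pair of vertices in consecutive parts is joined by an arc, any independent set lies inside a single part; and if $Q$ is a quasikernel contained in, say, $A$, then starting from $A$ one only reaches $B$ after one step and $C$ after two, so no vertex of $A\sm Q$ can be reached within distance $2$, forcing $Q=A$ and $|Q|=\delta=\frac{|V(D)|}{3}$. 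The symmetric argument for $Q\sub B$ or $Q\sub C$ gives the same conclusion.

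None of the three steps looks particularly hard in isolation; the mildly delicate point is the source-set accounting in the middle part, which is what allows the minimum-in-degree hypothesis to be converted into the input required by \Cref{thm:disjointWeak}.
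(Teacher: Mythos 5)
Your proposal is correct and follows essentially the same route as the paper: the bidirected $K_{\delta+1}$ (the paper uses a single copy rather than a disjoint union, which makes no difference), the observation that minimum in-degree $\delta$ rules out $\delta$-source sets so that \Cref{thm:disjointWeak} applies with $r=\delta+1$, and the cyclic blow-up of $C_3$ with parts of size $\delta$. Your write-up merely supplies a bit more detail on the source-set accounting and on why the only quasikernels of the blow-up are the three parts.
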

	\begin{proof}
		Part (a) follows by considering a bidirected $K_{\delta+1}$, which has minimum in-degree $\delta$ and every non-empty independent set has size $\frac{1}{\delta+1}|V(D)|$.
		
		For (b), observe that any digraph $D$ with minimum in-degree $\delta$ has no $\delta$-source sets.  As such, \Cref{thm:disjointWeak} implies that $D$ contains $\delta+1$ disjoint $q$-kernels for any $q\ge 2^{\delta+2}$, and in particular one of these has size at most $\frac{1}{\delta+1}|V(D)|$.
		
		For (c), we will show that $c_{\del,2}\ge \frac{\ell^2}{(\ell+1)(2\ell+1)}$ for every integer $\ell\ge 1$, from which the result follows.  Let $T$ be an Eulerian tournament on $v_1,\ldots,v_{2\ell+1}$ and let $k= \del \ell$ (though any large integer will also work).  We define a digraph $D$ as follows.  The vertices of $D$ lie in disjoint vertex sets $V_i,V_i'$ with $|V_i|=\del$ and $|V_i'|=k$ for all $1\le i\le 2\ell+1$.  We add every arc from $V_i$ to $V_j$ if $v_i\to v_j$ is an arc in $T$, and we add every arc from $V_i$ to $V_i'$ for all $i$.   In other words, $D$ is formed by taking $T$, adding leaves $v'_i$ to each vertex, and then blowing up the $v_i$ and $v_i'$ vertices into independent sets of size $\del$ and $k$, respectively.
		
		Observe that $D$ has minimum in-degree $\del$. We claim that every quasikernel $Q$ of $D$ contains at least $k\ell$ vertices.  Indeed, because $Q$ is independent, it can contain vertices from at most one $V_i$ set.  It follows that $Q$ must contain every vertex of $V_j'$ for each $j$ with $v_j\notin N^+_T[v_i]$ (as these vertices will not be reached by a path of length at most 2 otherwise).  There are exactly $\ell$ such vertices $v_j$, from which the claim follows.  Since $|V(D)|=(k+\del)(2\ell+1)$, this claim implies that $c_{\del,2}$ must satisfy 
		\[c_{\del,2}\ge \frac{k\ell}{(k+\del)(2\ell+1)}=\frac{\ell^2}{(\ell+1)(2\ell+1)},\]
		proving the result.
	\end{proof}
	The above suggests the following weak version of the Small Quasikernel Conjecture, which is asymptotically optimal in view of \Cref{prop:minDegree}(c).
	\begin{conj}
		There exists some $\delta\ge 1$ such that every digraph $D$ of minimum in-degree at least $\delta$ contains a quasikernel $Q$ with $|Q|\le \frac{1}{2}|V(D)|$.
	\end{conj}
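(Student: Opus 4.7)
The plan is to approach the conjecture via a dichotomy on the maximum out-degree. If some vertex $x\in V(D)$ satisfies $\deg^+(x)\ge \tfrac{2}{3}|V(D)|$, then \Cref{lem:X} produces a quasikernel $Q$ disjoint from $N^+(x)$, whence $|Q|\le |V(D)|-\deg^+(x)\le \tfrac{1}{3}|V(D)|$ and we are done. The substantial work lies in the complementary regime where every vertex has out-degree strictly less than $\tfrac{2}{3}|V(D)|$.

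In that regime, the most appealing route is to establish the stronger qualitative assertion that for all sufficiently large $\delta$, every digraph of minimum in-degree at least $\delta$ contains three pairwise disjoint quasikernels; pigeonhole then immediately yields one of size at most $\tfrac{1}{3}|V(D)|$. The template to emulate is \Cref{prop:noStructures}: starting from an arbitrary quasikernel $Q_1$, one forms an auxiliary digraph $D'$ on $V(D)\sm Q_1$ with shortcut arcs through $Q_1$, and then attempts to extract two further disjoint quasikernels of $D$ from a pair of disjoint quasikernels of $D'$. The fundamental obstruction to this step --- the Gutin-Koh-Tay-Yeo examples \cite{gutin2004number} of source-free digraphs without two disjoint quasikernels --- must be shown to force a small pseudo-source-like configuration that a sufficiently large $\delta$ rules out. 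A complementary angle is to iterate \Cref{lem:X}: having extracted a quasikernel missing $N^+(x_1)$ for a high out-degree vertex $x_1$, one looks for another high out-degree vertex $x_2$ in the residual structure and tries to force additional exclusions, using $\delta$ to guarantee that enough rounds can be carried out before the residual digraph becomes trivial.

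The main obstacle I expect is the intermediate regime where the maximum out-degree is $\Theta(|V(D)|/3)$: there \Cref{lem:X} alone only bounds $|Q|$ by roughly $\tfrac{2}{3}|V(D)|$, while \Cref{lem:ind} requires $\delta$ comparable to $|V(D)|$ to be effective, so neither basic tool suffices on its own. Bridging this gap appears to demand a genuinely new structural input --- for instance, showing that a digraph with minimum in-degree $\ge\delta$ and no vertex of very large out-degree must contain a short directed cycle or a dense subdigraph on which a refined quasikernel can be built directly. Pinning down the true threshold $\delta$ at which the bound $\tfrac{1}{3}$ first kicks in is almost certainly the deepest aspect of the problem and will likely require ideas beyond those developed in the present paper.
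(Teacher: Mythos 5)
This statement is posed in the paper as an open conjecture; the paper offers no proof of it, and your proposal does not constitute one either. The only step you actually complete is the easy case where some vertex has $\deg^+(x)\ge \frac{2}{3}|V(D)|$, which indeed follows from \Cref{lem:X}. Everything after that is a plan rather than an argument, and you say so yourself: the complementary regime ``appears to demand a genuinely new structural input'' and ``will likely require ideas beyond those developed in the present paper.'' A proof cannot end with the central case unresolved, so there is a genuine gap --- namely, the entire substance of the conjecture.

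Two further cautions about the specific routes you sketch. First, the ``three pairwise disjoint quasikernels'' strengthening is almost certainly harder than the conjecture itself: the paper's machinery (\Cref{thm:disjointWeak}, \Cref{prop:noStructures}) only produces disjoint $q$-kernels with $q$ growing exponentially in the number of parts, never disjoint $2$-kernels, precisely because the squaring step in \Cref{prop:noStructures} doubles distances; and the examples of \cite{gutin2004number} show that even two disjoint quasikernels can fail in source-free digraphs, so you would need to show that large minimum in-degree kills all such obstructions, which is not addressed. Second, your assessment of the intermediate regime is essentially correct but underlines the problem: when the maximum out-degree is $\Theta(|V(D)|)$ but below $\frac{2}{3}|V(D)|$, \Cref{lem:X} gives only $|Q|\le \frac{2}{3}|V(D)|+o(|V(D)|)$ at best, and \Cref{lem:ind} needs $\delta$ comparable to the maximum out-degree to push an independent set below $\frac{1}{3}|V(D)|$, whereas the conjecture asks for a fixed constant $\delta$. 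Neither tool, nor their combination, closes this gap, and no candidate for the missing structural input is supplied.
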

	The proof of \Cref{prop:strongLargeSmall} can be adjusted to continue to hold if we replace the source-free hypothesis with in-degree at least $\del$ for any fixed $\del$ (which is in fact how we arrived at the construction for \Cref{prop:minDegree}(c)), so weak versions of the Large Quasikernel Conjecture continue to be an obstacle for this conjecture as well.  It is unclear what the true behavior of $c_{\delta,q}$ should be in general, especially when $\delta$ is large in terms of $q\ge 3$.

	Another variant of being source-free is being strongly connected.  In \cite[Example 17]{erdHos2023small} it is shown that there exist strongly connected digraphs with smallest quasikernels of size $(\half -o(1))|V(D)|$.  For $q\ge 3$ we ask the following.
	\begin{quest}
		If $D$ is a strongly connected digraph and $q\ge 3$ is an integer, does there exist a $q$-kernel $Q$ of $D$ such that $|Q|\le \frac{|V(D)|}{q+1}+O_q(1)$?
	\end{quest}
	This bound would be best possible by considering a collection of $C_{q+2}$'s sharing a common vertex.  We have not spent much time thinking about this problem and it is likely that better constructions exist, especially in view of \cite[Example 17]{erdHos2023small} for quasikernels.  Still, it would be interesting to see if any non-trivial bounds could be obtained under the assumption of being strongly connected.  The best we can show is the existence of $q$-kernels of size at most roughly  $\frac{|V(D)|}{\log q}$ by using \Cref{thm:disjointWeak} to show the existence of around $\log q$ disjoint $q$-kernels.
	
	
	\subsection{Large Quasikernels}
	
	We asked in \Cref{conj:largeQuasi} whether every digraph contains a ``large'' quasikernel, namely one with $|N^+[Q]|\ge \half |V(D)|$, and we made some modest progress towards this goal in \Cref{thm:large} by proving $|N^+[Q]|\ge V(D)|^{1/3}$.  We believe that our bounds can be improved by proving the following result.
	\begin{conj}\label{conj:acyclic}
		Every digraph $D$ either contains a subset $A\sub V(D)$ with $D[A]$ acyclic and $|A|\ge |V(D)|^{1/2}$, or a vertex $y\in V(D)$ which is contained in some quasikernel and which has $\deg^+(y)\ge |V(D)|^{1/2}-1$.
	\end{conj}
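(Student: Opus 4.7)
The plan is to refine the degree-balanced case analysis from the proof of \Cref{thm:large}(a). Writing $n = |V(D)|$ and setting $\Delta = \lceil n^{1/2}\rceil - 1$, if every vertex of $D$ has out-degree strictly less than $\Delta$, then \Cref{lem:greedy}(a) immediately produces an acyclic subset of size at least $n/\Delta \geq n^{1/2}$, verifying the first alternative of the conjecture. So I may fix a vertex $y$ with $\deg^+(y) \geq \Delta \geq n^{1/2} - 1$; if $y$ lies in some quasikernel, the second alternative holds, so I may assume $y \in X$, the set of vertices lying in no quasikernel.

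Next, I would apply \Cref{lem:X} to $y$ to obtain a quasikernel $Q \sub V(D) \sm N^+[y]$ containing some in-neighbor $u$ of $y$. If $\deg^+(u) \geq n^{1/2} - 1$ we are done, so assume every vertex of $Q$ has out-degree below $n^{1/2} - 1$. The core of the proposal is to exploit the fact that such a low-out-degree $Q$ must still reach every vertex of $N^+(y)$ within distance $2$: combining this covering constraint with a second application of \Cref{lem:greedy}(a) inside $D[N^+(y)]$ (which has at least $n^{1/2} - 1$ vertices), one aims either to produce a large acyclic subset contained in $N^+(y)$, or to force some vertex near $Q$ to have out-degree at least $n^{1/2} - 1$ while lying in a quasikernel.

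The hard part, and the likely reason the statement remains a conjecture, is the gap between these two applications of \Cref{lem:greedy}(a): applied naively inside $N^+(y)$, it only yields an acyclic subset of size roughly $n^{1/4}$, a factor of $n^{1/4}$ short of the target. Bridging this gap seems to need either a quantitative strengthening of \Cref{lem:X} (producing a quasikernel of size $O(n^{1/2})$) or an iterative scheme: set $y_0 = y$, take $y_1$ of maximum out-degree in $D[N^+(y_0)]$, and so on, and show that the resulting chain cannot remain inside $X$ indefinitely. Combining such an iteration with a repeated use of \Cref{lem:X} at each step looks like the most promising path, but getting it to yield exactly $n^{1/2}$ rather than some $n^{1/2-\ep}$ is where I expect the real work to lie.
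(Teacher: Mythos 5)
This statement is \Cref{conj:acyclic}, which the paper leaves open: it supplies only the tightness example (disjoint Eulerian tournaments on $|V(D)|^{1/2}$ vertices) and the remark that the proof of \Cref{thm:large}(a) establishes the same dichotomy with exponent $1/3$ in place of $1/2$. Your proposal does not close the conjecture, and to your credit you say so explicitly; the verdict is that there is a genuine gap, and it sits exactly where you locate it. Writing $n=|V(D)|$, your first two steps are fine: if every out-degree is below $\lceil n^{1/2}\rceil-1$ then \Cref{lem:greedy}(a) gives the acyclic set, and otherwise you may assume the high-out-degree vertex $y$ lies in no quasikernel and, via \Cref{lem:X}, that the in-neighbor $u\in Q\cap N^-(y)$ has small out-degree. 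But up to this point you have only reproduced, localized at a single vertex, the case analysis the paper runs globally on the set $X$ of vertices in no quasikernel, and that analysis provably bottoms out at exponent $1/3$: balancing ``many arcs into $X$ force a vertex outside $X$ of out-degree $|X|/(n-|X|)$'' against ``\Cref{lem:greedy}(a) applied to $D-X$ gives an acyclic set of size $(n-|X|)/\Delta$'' cannot exceed $n^{1/3}$ for any choice of threshold, since pushing the first branch to $n^{1/2}$ requires $n-|X|\le n^{1/2}$, which starves the second branch.

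The step that would fail is the one you call the core of the proposal. The covering constraint --- that $Q$ reaches every vertex of $N^+(y)$ within distance $2$ --- gives no purchase on the conjecture's second alternative, because the intermediate vertices of those length-$2$ paths merely lie in $N^+(Q)$, and nothing forces any of them to belong to a quasikernel; the entire difficulty of the conjecture is producing a vertex that is simultaneously of high out-degree and contained in some quasikernel, and being ``near $Q$'' does not supply the latter property. Likewise, \Cref{lem:greedy}(a) applied inside $D[N^+(y)]$, a digraph on only about $n^{1/2}$ vertices, cannot return more than $n^{1/2}$ vertices and in the relevant regime returns about $n^{1/4}$, as you note. The proposed iteration $y_0,y_1,\ldots$ is not carried out and does not obviously terminate usefully: each $y_{i+1}$ is sought inside $N^+(y_i)$, a set whose size drops by a square root at each step, and nothing prevents every $y_i$ from lying in $X$. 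So the proposal is an accurate diagnosis of why the statement is hard rather than a proof of it; no comparison with the paper's argument is possible, because the paper does not prove it either.
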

	The bounds of \Cref{conj:acyclic} are best possible, as can be seen by considering the disjoint union of Eulerian tournaments of size $|V(D)|^{1/2}$.  We note that our proof of \Cref{thm:large}(a) essentially reduced to showing the conjecture above holds with $1/2$ replaced by $1/3$, and in particular \Cref{conj:acyclic} would improve \Cref{thm:large}(a) to the stronger bound $|N^+[Q]|\ge |V(D)|^{1/2}$, which is the natural barrier for our present approach.

	There are various weakenings of the Large Quasikernel Conjecture that one could try to solve, such as by improving the bounds for $q$-kernels, or by replacing $|N^+[Q]|$ with the larger quantity $\sum_{v\in Q} |N^+[v]|$.  Another direction would be to look for stronger versions of \Cref{conj:largeQuasi}, such as the following.
	
	\begin{quest}\label{quest:EvenLarger}
		Does every digraph $D$ contains a quasikernel $Q$ with $|N^+(Q)|\ge \half |V(D)\sm Q|$?  Equivalently, does there exist a quasikernel with $|N^+[Q]|\ge \frac{|V(D)|+|Q|}{2}$?
	\end{quest}
	This result would be sharp by considering the disjoint union of Eulerian tournaments of arbitrary sizes (cf the Large Quasikernel Conjecture which is only asymptotically sharp).  It would be interesting to know if, like the Large Quasikernel Conjecture, \Cref{quest:EvenLarger} is a consequence of the Small Quasikernel Conjecture.
	
	\textit{Update:} shortly after posting our paper, it was shown by Ai, Liu, and Peng \cite{ai2024variable} that \Cref{quest:EvenLarger} is indeed a consequence of the Small Quasikernel Conjecture.  Moreover, a partial converse to \Cref{prop:strongLargeSmall} was established, showing that proving non-trivial linear bounds for the Large Quasikernel Conjecture is equivalent to proving non-trivial linear bounds for the Small Quasikernel Conjecture.

	\textbf{Acknowledgments}.  We thank Ruth Luo for introducing us to the Small Quasikernel Conjecture, as well as Xiaoyu He, Noah Kravitz, and Bhargav Narayanan for fruitful discussions.  We further thank Noah Kravitz for correcting typos from an earlier draft, as well as for shortening our original proof of \Cref{lem:disjointUnion}, and we thank P\'eter L.\ Erd\H{o}s for pointing out that our previous statement of \Cref{prop:smallQuasiBipartite} could be strengthened to its current form with an identical proof.
	
	
	\bibliographystyle{abbrv}
	\bibliography{refs}

\begin{thebibliography}{10}

\bibitem{ai2023results}
J.~Ai, S.~Gerke, G.~Gutin, A.~Yeo, and Y.~Zhou.
\newblock Results on the small quasi-kernel conjecture.
\newblock {\em Discrete Mathematics}, 346(7):113435, 2023.

\bibitem{ai2024variable}
J.~Ai, X.~Liu, and F.~Peng.
\newblock A variable version of the quasi-kernel conjecture.
\newblock {\em arXiv preprint arXiv:2406.04887}, 2024.

\bibitem{chvatal1974every}
V.~Chv{\'a}tal and L.~Lov{\'a}sz.
\newblock Every directed graph has a semi-kernel.
\newblock In {\em Hypergraph Seminar: Ohio State University 1972}, pages
  175--175. Springer, 1974.

\bibitem{erdHos2010two}
P.~Erd{\H{o}}s and L.~Sz{\'e}kely.
\newblock Two conjectures on quasi-kernels, open problems no. 4. in fete of
  combinatorics and computer science.
\newblock {\em Bolyai Society Mathematical Studies}, 2010.

\bibitem{erdHos2023small}
P.~L. Erd{\H{o}}s, E.~Gy{\H{o}}ri, T.~R. Mezei, N.~Salia, and M.~Tyomkyn.
\newblock On the small quasi-kernel conjecture.
\newblock {\em arXiv preprint arXiv:2307.04112}, 2023.

\bibitem{gutin2004number}
G.~Gutin, K.~M. Koh, E.~G. Tay, and A.~Yeo.
\newblock On the number of quasi-kernels in digraphs.
\newblock {\em Journal of Graph Theory}, 46(1):48--56, 2004.

\bibitem{kostochka2020towards}
A.~Kostochka, R.~Luo, and S.~Shan.
\newblock Towards the small quasi-kernel conjecture.
\newblock {\em arXiv preprint arXiv:2001.04003}, 2020.

\bibitem{kwasnik1980k}
M.~Kwasnik.
\newblock On (k, l)-kernels on graphs and their products.
\newblock {\em Technical University of Wroc{\l}aw, Wroc{\l}aw}, 1980.

\bibitem{langlois2022algorithmic}
H.~Langlois, F.~Meunier, R.~Rizzi, and S.~Vialette.
\newblock Algorithmic aspects of small quasi-kernels.
\newblock In {\em International Workshop on Graph-Theoretic Concepts in
  Computer Science}, pages 370--382. Springer, 2022.

\bibitem{langlois2023quasi}
H.~Langlois, F.~Meunier, R.~Rizzi, and S.~Vialette.
\newblock Quasi-kernels in split graphs.
\newblock {\em arXiv preprint arXiv:2312.15519}, 2023.

\bibitem{lovasz2007combinatorial}
L.~Lov{\'a}sz.
\newblock {\em Combinatorial problems and exercises}, volume 361.
\newblock American Mathematical Soc., 2007.

\bibitem{van2021kernels}
A.~van Hulst.
\newblock Kernels and small quasi-kernels in digraphs.
\newblock {\em arXiv preprint arXiv:2110.00789}, 2021.

\bibitem{von1947theory}
J.~Von~Neumann and O.~Morgenstern.
\newblock Theory of games and economic behavior, 2nd rev.
\newblock 1947.

\end{thebibliography}
	 
	\newpage 
	\appendix
	\section{Strengthening the Results}\label{append}
	
	Here we discuss a few technical ideas themed around improving the main results of the paper.

	\subsection{Variants of the Large Quasikernel Conjecture}
	
	Here we briefly discuss some variants of the Large Quasikernel Conjecture which are natural to consider but which ultimately turn out to be non-interesting.
	
	First, one could form an (a priori) stronger version of the Large Quasikernel Conjecture by considering digraphs $D$ together with a weight function $w:V(D)\to \R_{\ge 0}$.  In this setting, a natural conjecture would be that for every weighted digraph $D$, there exists a qusikernel $Q$ such that the weight of the vertices of $N^+[Q]$ is at least half of the total weight on $V(D)$.  This stronger conjecture is implicitly solved for tournaments in \cite[Theorem 12]{ai2023results}, and we note that the proof for \Cref{prop:small2Large} can be used to show that this weighted conjecture is also implied by the Small Quasikernel Conjecture (essentially by giving each vertex about $k\cdot w(u)$ new neighbors).  
	
	However, this weighted version turns out to be implied by the unweighted version.  Somewhat more precisely, if $w$ is integral valued, then we can replace each $u\in V(D)$ by a set $U$ of size $w(u)$ and each arc $uv$ by including every possible arc from $U$ to $V$, in which case a solution to the Large Quasikernel Conjecture in this auxiliary digraph lifts to a solution for the weighted version.  This solves the problem for $w$ integral valued, which solves it for rational values, which can then be used to solve it for real values by approximating by rationals.  As such, there is no loss in generality by only considering \Cref{conj:largeQuasi}.
	
	Second, one might consider a variant of the Large Quasikernel Conjecture for $q$-kernels by replacing $N^+[Q]$ by the set of all vertices that are at distance at most $q-1$ from $Q$.  However, if $q\ge 3$ then we can take $Q$ to be any quasikernel (which is in particular a $q$-kernel), in which case this set ends up being all of $V(D)$.
	
	\subsection{Improving the Disjointness Result}\label{sec:modest}
	
	Here we discuss the problem of improving the bounds of \Cref{thm:disjointFull} for finding $\beta$-kernels in digraphs without source-set.  As noted around \Cref{conj:disjoint}, the largest loss in our argument comes from \Cref{prop:noStructures} (where we essentially take the square of $D$), and this step seems difficult to get around with our present approach.  However, there is one (somewhat technical) place where our argument can likely be improved to give a moderate strengthening of our results.
	
	Specifically, the loss we consider comes from \Cref{lem:D'} where we take each proper $(r-2)$-pseudo-source set $S$ and add all possible arcs within $N^-(S)$.  With this process, we destroy all $(r-2)$-pseudo-source sets of $D$ at the cost of decreasing the path lengths in $D'$ by as much as $2(r-2)-1$ compared to that of $D$.  It seems plausible that one can add fewer arcs to $S\cup N^-(S)$ while maintaining the $(r-2)$-pseudo-source set condition and decreasing the path length by significantly less.   Unwinding our proof shows that it would suffice\footnote{Specifically, given a digraph $D$, we take each strongly connected component $C$ which contains an $(r-2)$-pseudo-source set and replace $D[C]$ with $D'[C]$ as defined in \Cref{prob:paths}.  We can apply this result since $C$ contains at least $r$ vertices by \Cref{cl:leastR} and \Cref{cl:scc} (with this second claim implying that $C=S\cup N^-(S)$), and the condition $|V(D)|-\al(D)\le r-2$ holds since $|S|\le r-2$ and $|N^-(S)|\le \al(D)$.  The proof of \cref{cl:mostOne} essentially says that the set of vertices of $D'[C]$ which is not in some $(r-2)$-pseudo-source set of $D'$ is an independent set of $D'$, i.e.\ has size at most $\al(D')$, so $|V(D')|-\al(D')\ge r-1$ implies there are no $(r-2)$-pseudo-source sets in $D'$.} to solve the following (seemingly tractable) problem, where here $\al(D)$ denotes the size of a largest independent set of $D$.

	\begin{prob}\label{prob:paths}
		For each $r\ge 3$, determine the smallest number   $\Del(r)$ such that the following holds:  If $D$ is a strongly connected digraph on at least $r$ vertices such that $|V(D)|-\al(D)\le r-2$, then there exist a superdigraph $D'\supseteq D$ on the same vertex set $V(D)$ such that $|V(D')|-\al(D')\ge r-1$ and such that for all $u,v\in V(D')$, we have
		\[d_{D'}(u,v)\ge d_D(u,v)-\Del(r).\]
	\end{prob}
	
	Our current argument essentially shows that we can take $\Del(r)=2(r-2)-1$ (via adding all possible arcs within $D$).  We believe one can get this to work with $\Del(r)=r-3$, which would be best possible by considering a directed $r$-cycle.   If one could prove this for $\Del(r)=r-3$, then one could essentially replace every $2(r-2)$ in our proof with $r-2$.  In particular, the analog of \Cref{thm:disjointFull} would guarantee $\al$-kernels in $(r-1)$-source-free digraphs with $\al_r=r-1$, and such a result would be best possible by considering the directed $r$-cycle.

	\subsection{Difficulties with Bipartite Digraphs}
	In \Cref{thm:bipartite} we managed to verify some special cases of \Cref{quest:smallCycles} by finding small $q$-kernels whenever $D$ has a bipartition $U\cup V$ and avoids certain cycle lengths.  Moreover, \Cref{thm:bipartite} can be used to guarantee that these small $q$-kernels lie entirely within, say, $U$.  This additional feature of having $Q\sub U$ was vital to our proof of \Cref{thm:bipartiteSpecial}, as otherwise we would not be able to guarantee that the union of the $Q_i$ sets from our reduced digraph $D'$ would still be independent.  However, the following result shows that it is not possible in general to solve \Cref{quest:smallCycles} for bipartite digraphs if we continue to require  $Q\sub U$.
	\begin{lem}\label{lem:badBipartite}
		For all $q,\ell\ge 1$, there exists a digraph $D$ with bipartition $U\cup V$ such that every directed cycle of $D$ has length $2\ell$ and such that any $q$-kernel $Q$ of $D$ which lies in $U$ has $|Q|\ge \frac{1}{q} |V(D)|$.
	\end{lem}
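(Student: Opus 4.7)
I would construct $D$ explicitly. For $q=1$ the claim is trivial: take $D$ to be a single vertex placed in $U$ with no arcs --- the cycle hypothesis then holds vacuously and the only $1$-kernel contained in $U$ is $V(D)$ itself, giving $|Q|=|V(D)|/1$.

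For $q\ge 2$, begin with the directed cycle $C_{2\ell}$ on vertices $u_1,v_1,\ldots,u_\ell,v_\ell$, with arcs $u_i\to v_i$ and $v_i\to u_{i+1}$ (indices modulo $\ell$). To each $v_i$ attach a directed pendant path $v_i\to x_i^{(1)}\to\cdots\to x_i^{(q-2)}$ of length $q-2$ terminating in a sink (when $q=2$ no pendant vertices are added). Place each $u_i$ and each $x_i^{(k)}$ with $k$ odd into $U$, and the remaining vertices into $V$; this gives a valid bipartition. Since the pendants are acyclic and attach to the cycle only at the $v_i$'s, the only directed cycle of $D$ is the original $C_{2\ell}$, and $|V(D)|=2\ell+(q-2)\ell=q\ell$.

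The heart of the argument is a local forcing claim. For each $i$, let $z_i$ denote the sink of the $i$th pendant (with the convention $z_i=v_i$ when $q=2$) and set
\[
F_i=\{u_i\}\cup\{x_i^{(k)}:1\le k\le q-2,\ k\text{ odd}\}.
\]
I would show that $F_i$ is exactly the set of $U$-vertices of $D$ at directed distance at most $q$ from $z_i$. The inclusion $\supseteq$ is immediate: $u_i$ reaches $z_i$ in $q-1$ steps via $u_i\to v_i$ and the pendant, while each $x_i^{(k)}\in F_i$ reaches $z_i$ in $q-2-k\le q$ steps along the remainder of the pendant. For the reverse inclusion, the only way into branch $i$ from outside is through $v_i$, and since $C_{2\ell}$ is a directed cycle one can travel along it only in one direction; hence any walk from a $u_j$ with $j\ne i$ to $u_i$ must traverse at least two cycle arcs, so such a walk to $z_i$ has length at least $q+1$. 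Similarly, any $x_j^{(k)}$ with $j\ne i$ cannot leave its own branch at all.

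Since $z_i$ must be $q$-covered by $Q$, there exists some $u\in Q$ with $\dist(u,z_i)\le q$; as $Q\subseteq U$, this $u$ necessarily lies in $F_i$, so $Q\cap F_i\ne\emptyset$ for every $i$. The $F_i$ involve disjoint label families and are therefore pairwise disjoint, giving
\[
|Q|\ge\ell=\frac{|V(D)|}{q},
\]
as required. The only nontrivial step is the shortest-path analysis in the forcing claim; once one uses the one-way nature of the directed cycle to rule out cheap shortcuts between different pendant branches, the remaining verifications are routine bookkeeping.
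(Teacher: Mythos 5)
Your proposal is correct and uses exactly the paper's construction: a directed $2\ell$-cycle with pendant directed paths of length $q-2$ attached to each $v_i$, forcing any $q$-kernel inside $U$ to hit each of the $\ell$ disjoint sets $F_i$, hence $|Q|\ge \ell = \tfrac{1}{q}|V(D)|$. The paper states the forcing step without detail; your distance analysis (and the separate trivial treatment of $q=1$, where the paper's pendant-length $q-2$ degenerates) just makes the same argument explicit.
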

	For example, when $q=3$ this shows that regardless of the cycle conditions placed on $D$, it is impossible to improve upon the bound $|Q|\le \frac{1}{3}|V(D)|$ obtained from \Cref{thm:bipartite} if we continue insisting that $Q\sub U$.  As such, something like \Cref{quest:unicyclic} would be needed to solve \Cref{quest:smallCycles} for bipartite digraphs when $q=3$.
	\begin{proof}
		Let $D$ be the digraph obtained from a directed cycle $u_1,v_1,\ldots,u_\ell,v_\ell$ after attaching directed paths of length $q-2$ to each $v_i$ vertex.  If $u_i\in U$ for all $i$, then it is not difficult to see that every $q$-kernel contained in $U$ must have size at least $\ell$ (namely, $Q$ must contain either $u_i$ or at least one vertex in the path attached to $v_i$ for all $i$), proving the result.
	\end{proof}
	There exist some constructions that are even better than those of \Cref{lem:badBipartite}.  For example, for $q=5$ and $\ell=3$ one can consider a directed 6-cycle after attaching a leaf to each vertex of $V$.  In this case the smallest 5-kernel contained in $U$ satisfies $|Q|=\frac{2}{9}|V(D)|$, which is a larger fraction than is guaranteed by \Cref{lem:badBipartite} and which (unlike \Cref{lem:badBipartite} in this case) shows that \Cref{quest:smallCycles} can not hold when $q=5$ and $L(D)=\{\ell\}$ if we insist on $Q\sub U$.

	\subsection{Beyond Bipartite Digraphs}\label{sec:beyond}
	Recall that we say a partition $V_1\cup \cdots V_r$ of $V(D)$ is an \textit{$r$-cyclic partition} if every arc $uv\in E(D)$ has $u\in V_i$ and $v\in V_{i+1}$ for some $1\le i\le r$ (where here and throughout we write our indices mod $r$).  Here we discuss \Cref{thm:rCyclic}, which we restate below for convenience.
	\begin{thm*}
		Let $D$ be a source-free bipartite digraph with $r$-cyclic partition $V_1\cup \cdots \cup V_r$.  If $q\ge r^2-1$ and $\ell\ge (r^2+r)/2$ are integers such that $\ell\le (q+1+r)/2$, $\ell\equiv 0\mod r$, and such that every directed cycle of $D$ has length at least $\ell$, then $D$ contains a $q$-kernel $Q$ of size at most $\frac{1}{\ell}|V(D)|$ with $Q\sub V_1$.
	\end{thm*}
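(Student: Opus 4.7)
My approach mirrors the proof of \Cref{thm:bipartite} with generalizations adapted to the $r$-cyclic setting. First, I would reduce to an ``$r$-unicyclic'' version of the theorem (the analog of \Cref{prop:unicyclic}), and then establish that unicyclic case by induction on $|V(D)|$. For the reduction: since $D$ is source-free, I can iteratively delete arcs until every vertex has in-degree exactly $1$, producing a subdigraph $D'$ whose connected components are each $r$-unicyclic and inherit an $r$-cyclic partition. Each component's unique cycle has length a multiple of $r$ and at least $\ell$, so finding $q$-kernels $Q_i \sub V_1 \cap V(D_i)$ of size at most $\frac{1}{\ell}|V(D_i)|$ in each component yields the desired $Q = \bigcup Q_i$, exactly as in \Cref{thm:bipartiteSpecial}.

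For the $r$-unicyclic case, I would target the bound $|Q| \le \frac{2}{q+1+r}|V(D)|$ by induction on $|V(D)|$. In an $r$-unicyclic $D$ with cycle $C$ of length $r\ell'$, label the cycle vertices $v_{j,i}$ for $1 \le j \le \ell'$ and $1 \le i \le r$ with $v_{j,i} \in V_i$, and assign each non-cycle vertex $w$ a \emph{type} $j$ according to which chunk the unique path $P_w$ from \Cref{lem:unicyclic} begins in. The key inequality (analogous to \eqref{eq:chunkDistance}) is $\dist(v_{j,1}, w) \le \ty(j) - 1$ for every type-$j$ vertex $w$, since the shortest path from $v_{j,1}$ walks along the cycle to the appropriate $v_{j,i}$ and then out into the tree, using only type-$j$ vertices. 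Taking $Q = \{v_{1+mt, 1} : 0 \le t \le \lceil \ell'/m \rceil - 1\} \sub V_1$ with $m \approx (q+1+r)/(2r)$, for any type-$j$ vertex $w$ with $1 + mt \le j \le m(t+1)$ the distance satisfies
\[\dist(Q, w) \le r(m-1) + \ty(j) - 1,\]
which will be at most $q$ provided $\ty(j) < (q+1+r)/2$.

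Proving this bound on $\ty(j)$ is the bulk of the argument, and follows the structure of Claims \ref{cl:weakDistance}--\ref{cl:typeA}. Assuming a minimal counterexample, I would: (1) show no vertex is at distance $\ge q$ from $C$ by removing a trailing subtree and applying the induction hypothesis, adding one new $V_1$ vertex near the base of the removed subtree back to $Q$; (2) show $\ty(j) < (q+1+r)/2$ for all $j$ by deleting the $j$-th chunk and splicing in the arc $v_{j-1,r} \to v_{j+1,1}$, which respects the $r$-cyclic partition since $r+1 \equiv 1 \pmod r$ and shortens the cycle from $r\ell'$ to $r(\ell'-1)$; (3) derive a bound on $|V(D)|$ in terms of $\ell'$; and (4) refine the per-chunk bounds and apply the explicit construction above. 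The ``remainder'' case $\ell' \not\equiv 0 \pmod m$ requires selecting the chunk with minimum type, exactly as in the original proof.

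The main obstacle is calibrating the parameters so all the inequalities close. The factor-of-$r$ cost per chunk (versus $2$ in the bipartite case) is what drives the hypothesis $q \ge r^2 - 1$, which roughly ensures $m \ge r$ and handles the base cases. The condition $\ell \ge (r^2+r)/2$ accommodates the low-$\ell'$ base cases where the generic construction fails, analogous to the $\ell = 2$ case handled separately in \Cref{prop:unicyclic}; for instance, when $\ell' \le r$ the cycle alone may be too short to host $\lfloor \ell'/m \rfloor \ge 1$ kernel vertices with the required spacing. The condition $\ell \equiv 0 \pmod r$ is natural since every cycle length in an $r$-cyclic digraph is a multiple of $r$. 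The most delicate arithmetic will be in step (4), where the remainder handling must balance carefully against the $r$-scaled distance costs -- this is where the bipartite proof's $a$-parameter analysis becomes essentially more intricate, since splitting a chunk contributes $r$ to the distance instead of just $2$.
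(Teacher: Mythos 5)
Your overall architecture matches the paper's: reduce to the special case $\ell=(q+1+r)/2$ with $q\equiv -1\pmod r$ via $q'=2\ell-r-1$, pass to unicyclic components by deleting arcs until every in-degree is $1$, introduce types/chunks, and prove the unicyclic case by minimal counterexample with the same three preliminary claims. However, there is a genuine gap at the final construction step. You claim that for the evenly spaced set $Q=\{v_{1+mt}^1\}$ the estimate $\dist(Q,w)\le r(m-1)+\ty(j)-1$ ``will be at most $q$ provided $\ty(j)<(q+1+r)/2$.'' That requires $\ty(j)\le q-rm+r+1$. Writing $q\equiv-1\pmod r$ as either $q=-1-r+2rm$ or $q=-1-2r+2rm$, the first case gives $q-rm+r+1=rm=(q+1+r)/2$ and your implication holds; but in the second case $q-rm+r+1=rm-r$ while the guaranteed bound is only $\ty(j)<rm-r/2$, which implies $\ty(j)\le rm-r$ only when $r=2$. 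So for $r\ge 3$ and $q\equiv -1-2r\pmod{2r}$ the evenly spaced set need not be a $q$-kernel, and this is exactly the point the paper flags as ``where things get more complicated.''

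The paper's fix is not a recalibration of the remainder arithmetic but a change to the construction of $Q$: it sets $B=\{j:\ty(j)\ge rm-r+1\}$, lets $B'\sub B$ be the bad indices sitting at multiples of $m$, reindexes the cycle (an averaging argument over the choice of starting vertex) so that $|B'|\le\lfloor|B|/m\rfloor$, and augments $Q$ by $\{v_j^1:j\in B'\}$. It then needs a separate counting step, using $|V(D)|\ge r\ell+(1-2r+rm)|B|$ together with $m\ge 3$ (which is where $q\ge r^2-1$ enters, forcing $m\ge(r+2)/2$), to verify that the augmented set still has size at most $\frac{2}{q+1+r}|V(D)|$. This augmentation-plus-reindexing idea is absent from your proposal, and without it the case $q=-1-2r+2rm$, $r\ge 3$ of your step (4) fails.
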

		Note that\Cref{thm:rCyclic} requires the extra hypothesis $\ell\equiv 0\mod r$ compared to \Cref{thm:bipartite}, but this is somewhat superfluous since every cycle length of $D$ must be a multiple of $r$.  \Cref{thm:rCyclic} can be tight by considering disjoint unions of $C_\ell$, as well as by considering disjoint unions of $C_{2\ell}$ whenever $2\ell\ge q+2$ (which always occurs at the maximum possible value $\ell=\lfloor (q+1+r)/2\rfloor$).

	The bound $q\ge r^2-1$ is necessary for \Cref{thm:rCyclic} to hold: if we had $q=r^2-r-1$, then one could take $D$ to be $C_{q+1}$ after attaching a leaf to each of the $(q+1)/r=r$ vertices in $V_r$.  In this case, any $q$-kernel contained in $V_1$ must contain at least two vertices, but the bound we ask for is $\lfloor\frac{2|V(D)|}{q+1+r} \rfloor=\lfloor\frac{2(r^2-1)}{r^2} \rfloor=1$.
	
	Similar to our proof of \Cref{thm:bipartiteSpecial}, \Cref{thm:rCyclic} will follow from the following special case of $\ell=(q+1+r)/2$ and $q\equiv -1\mod r$.
	\begin{thm}\label{thm:rCyclicSpecial}
		Let $D$ be a source-free digraph with $r$-cyclic partition $V_1\cup \cdots \cup V_r$.  If $q\ge r^2-1$ is an integer with $q\equiv -1\mod r$ and is such that every directed cycle of $D$ has length at least $(q+1+r)/2$, then there exists a $q$-kernel $Q$ of $D$ such that $|Q|\le \frac{2}{q+1+r} |V(D)|$ with $Q\sub V_1$.
	\end{thm}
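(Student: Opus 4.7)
The plan is to adapt the two-step strategy used for Theorem \ref{thm:bipartiteSpecial}: first reduce to unicyclic digraphs, then handle the unicyclic case directly via a minimal-counterexample argument that generalizes Proposition \ref{prop:unicyclic}. For the reduction, since $D$ is source-free I would iteratively delete arcs until the resulting subdigraph $D'\sub D$ has every vertex of in-degree exactly one. Decomposing $D'=D_1\sqcup\cdots\sqcup D_t$ into (underlying-)connected components makes each $D_i$ unicyclic, and the $r$-cyclic partition $V_1\cup\cdots\cup V_r$ restricts to each $D_i$. Every directed cycle of $D_i$ is a directed cycle of $D$, hence has length at least $(q+1+r)/2$; since cycle lengths in any $r$-cyclic digraph are multiples of $r$, each cycle has length at least $r\lceil(q+1+r)/(2r)\rceil$. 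Given the unicyclic analog stated below, applying it to each $D_i$ produces $q$-kernels $Q_i\sub V_1\cap V(D_i)$ with $|Q_i|\le\frac{2}{q+1+r}|V(D_i)|$, and $Q=\bigcup_i Q_i\sub V_1$ is automatically independent in $D$, has the right size, and is a $q$-kernel of $D$ since $\dist_D(Q,x)\le\dist_{D_i}(Q_i,x)\le q$ for any $x\in V(D_i)$.

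For the unicyclic analog I would prove: if $D$ is unicyclic with $r$-cyclic partition $V_1\cup\cdots\cup V_r$, with directed cycle of length $r\ell'\ge(q+1+r)/2$, and with $|V(D)|\ge(q+1+r)/2$, then $D$ has a $q$-kernel $Q\sub V_1$ of size at most $\frac{2}{q+1+r}|V(D)|$. Write $q=-1+rs$ (so that the target bound reads $|Q|\le\frac{2}{r(s+1)}|V(D)|$) and set $m=\lceil(s+1)/2\rceil$. Following the template of Proposition \ref{prop:unicyclic}, take a minimal counterexample $D$, label the cycle vertices $u_{i,j}\in V_i$ for $1\le i\le r$, $1\le j\le\ell'$, assign each $w\in V(D)$ a \emph{type} $j$ according to which block $\{u_{1,j},\ldots,u_{r,j}\}$ the unique path $P_w$ (from Lemma \ref{lem:unicyclic}) begins on, and let $\ty(j)$ count the vertices of type $j$. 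The key facts to establish in order are: (i) every $w$ satisfies $\dist(C,w)<q$, via a tail-excision argument that contradicts minimality if some $P_w$ is too long; (ii) each $\ty(j)<(q+1+r)/2$, via a chunk-excision argument that either collapses a too-large chunk into its cycle-representative or contracts the chunk away by adding an arc short-circuiting it around the cycle; (iii) a tight bound on $|V(D)|$ in terms of $\ell'$ flowing from (ii) and an analog of Claim \ref{cl:smallD}. The candidate $q$-kernel is then $Q=\{u_{1,1+mt}:0\le t\le\lceil\ell'/m\rceil-1\}\sub V_1$, with a boundary adjustment governed by $a\equiv\ell'\pmod m$ exactly as in Claim \ref{cl:typeA}.

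The main obstacle will be the distance bookkeeping for verifying that $Q$ really is a $q$-kernel, since the $r$-cyclic setting injects an extra offset of up to $r-1$ every time we hop colors. In the bipartite case, the trip from $u_{1+mt}$ to a target $w$ of type $j\in[1+mt,m(t+1)]$ cost $2(j-(1+mt))+\ty(j)-1\le 2(m-1)+(q-1)/2\le q$, because consecutive $U$-vertices on the cycle are 2 steps apart. For general $r$, the analogous cost is $r(j-(1+mt))+(i-1)+\ty(j)-1$ where $i$ is the color of the attachment point $u_{i,j}\in P_w\cap C$, so the worst case is $r(m-1)+(r-1)+\ty(j)-1$. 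Forcing this to be at most $q=rs-1$ together with the chunk bound $\ty(j)<(q+1+r)/2$ is exactly what dictates the hypotheses $q\ge r^2-1$ and (after the reduction) $\ell\ge(r^2+r)/2$; pinning down the constant to the sharp $\frac{2}{q+1+r}$ rather than something weaker requires carrying out the analogs of Claims \ref{cl:smallD} and \ref{cl:typeA} with the residue $a\equiv\ell'\pmod m$ tracked carefully, which is where the bulk of the technical work will lie.
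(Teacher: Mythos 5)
Your outline tracks the paper's own (sketched) argument almost exactly -- the same reduction to unicyclic components, the same type/chunk statistics $\ty(j)$, the same parameter $m=\lceil (s+1)/2\rceil$, and the same two claims $\ty(j)<(q+1+r)/2$ and the analog of \Cref{cl:smallD}. However, there is a genuine gap at the final step. Your candidate kernel $Q=\{u_{1,1+mt}:0\le t\le \lceil \ell'/m\rceil-1\}$ ``with a boundary adjustment as in \Cref{cl:typeA}'' only works in the regime $\frac{2}{q+1+r}|V(D)|<\lceil \ell'/m\rceil$, where the sharpened bounds $\ty(j)\le q-rm-ra+2r+1$ (and the minimum over $j$) are available. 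In the complementary regime $\frac{2}{q+1+r}|V(D)|\ge\lceil \ell'/m\rceil$ you only have the weak bound $\ty(j)<(q+1+r)/2$, and your own distance computation then requires $\ty(j)\le q-rm+r+1$. When $s=(q+1)/r$ is odd (so $q=-1-r+2rm$) these two bounds agree and the naive $Q$ works; but when $s$ is even (so $q=-1-2r+2rm$) one has $q-rm+r+1=rm-r$ while the guarantee is only $\ty(j)<rm-r/2$, and these are incompatible for every $r\ge 3$. So for general $r$ the plain arithmetic-progression kernel genuinely fails to be a $q$-kernel, and no amount of reindexing by $a\equiv\ell'\bmod m$ repairs it, because the obstruction is per-type, not a boundary effect.

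The paper's fix is an extra combinatorial ingredient you would need to supply: let $B$ be the set of ``bad'' types $j$ with $\ty(j)\ge rm-r+1$, let $B'\sub B$ be those bad types sitting at positions $j\equiv 0\bmod m$ (exactly the types your $Q$ cannot reach in time), and observe that by choosing the starting index of the progression appropriately (an averaging argument over the $m$ rotations) one may assume $|B'|\le\lfloor |B|/m\rfloor$. One then augments $Q$ with $\{u_{1,j}:j\in B'\}$ and runs a separate counting argument -- using $|V(D)|\ge r(\ell'-|B|)+(rm-r+1)|B|$ -- to show the augmented set still has size at most $\frac{2}{q+1+r}|V(D)|$. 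Without this augmentation (or some substitute for it), your plan stalls precisely in the case $q\equiv -1\pmod{2r}$ with $r\ge 3$, which is half of the cases the theorem covers.
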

	With this result we can prove \Cref{thm:rCyclic} by applying \Cref{thm:rCyclicSpecial} with $q':=2\ell-r-1\ge r^2-1$, which yields a $q'$-kernel of size at most $\frac{1}{\ell}|V(D)|$ that is also a $q$-kernel since $q\ge q'$ by hypothesis.  We now discuss the ideas of this proof.
	
	\begin{proof}[Sketch of Proof of \Cref{thm:rCyclicSpecial}]
		As before, it suffices to prove the result when $D$ is unicyclic with cycle length $r\ell\ge 2r$ and $|V(D)|\ge (q+1+r)/2$.  We label the vertices of the cycle by $v_1^1,v_1^2,\ldots,v_1^r,v_2^1,v_2^2$, and so on.  We say a vertex $w$ has type $j$ if it is closest to $\{v_j^1,\ldots,v_j^r\}$ and let $\ty(j)$ denote the number of vertices of type $j$.  Essentially the same proof as before shows for all $j$ that
		\[\ty(j)<(q+1+r)/2,\]
		though a little extra care is needed to deal with the case when $(q+1+r)/2$ is not an integer.
		
		Let $m$ be such that either $q=-1-r+2rm$ or $q=-1-2r+2rm$ and let $a$ be such that $\ell\equiv a \mod m$.  The case where  $\frac{2}{q+1+r}|V(D)|<\lceil \ell/m\rceil$ is very similar to the $r=2$ case: here we can follow the same logic as in our proof of \eqref{eq:smallD} to show
		\[|V(D)|\le r\ell+q-rm-ra+r,\]
		where again we have that $q-rm-ra+r>0$ by the same reasoning as before.  From this it is easy to show that $\ty(j)\le q-rm-ra+2r+1$ and that $\min_j \ty(j)\le q-rm-ra+2r+1$.  Assuming $\ty(\ell)=\min_j\ty(j)$, we can take $Q=\{v_{1+mt}^1:0\le t\le \ell/m-1\}$ which will have the desired size.  If $w\in V(D)$ is of type $j\ne \ell$ then we have
		\[\dist(Q,w)\le r(m+a-2)+\ty(j)-1\le q,\]
		and a similar argument holds for $j=\ell$ showing that $Q$ is indeed a small $q$-kernel.
		
		It thus remains to deal with the case $\frac{2}{q+1+r}|V(D)|\ge \lceil \ell/m\rceil$, and it is here that things  get more complicated.  Naively, we would like to use the set $Q=\{v_{1+mt}^1:0\le t\le \lceil \ell/m\rceil-1\}$ which has the desired size, but this turns out\footnote{Specifically this will be a $q$-kernel if we always have $\ty(j)\le q-rm+r+1$, and we are only guaranteed $\ty(j)<(q+1+r)/2$.  If $q=-1-r+2rm$ then we have $\ty(j)<rm=q-rm+r+1$, so this is fine.  However, if $q=-1-2r+2rm$ we only have $\ty(j)<rm-r/2$, and this will only imply $\ty(j)\le q-rm+r+1=rm-r$ if $r=2$.} to be a $q$-kernel in general only if either $q=-1-r+2rm$ or if $r=2$.  However, we can still get the idea of picking every $1/m$ vertices to work when $q=-1-2r+2rm$ and $\frac{2}{q+1+r}|V(D)|\ge \lceil \ell/m\rceil$ provided we add in a few extra vertices.
		
		To this end, define $B$ to be the set of $j$ with $\ty(j)\ge rm-r+1$, and let $B'\sub B$ be the elements with $j=mt$ for some integer $1\le t\le \ell/m$ (noting that the $j\in B'$ are exactly the $j$ such that we can not guarantee $\dist(Q,w)\le q$ in the argument above).  Possibly by reindexing our cycle, we can assume that $|B'|\le  \lfloor |B|/m\rfloor $ (since, for example, randomly choosing the vertex labeled $v_1^1$ gives $\E[|B'|]\le |B|/m$).  Define
		\[Q=\{v_{1+mt}^1:0\le t\le \lceil \ell/m\rceil-1\}\cup \{v_j^1:j\in B'\}.\]
		
		It is not difficult to show that $Q$ is a $q$-kernel.  Since $|Q|\le  \lceil \ell/m\rceil+\lfloor |B|/m\rfloor$, it suffices to show
		\begin{equation}\frac{2}{rm-r} |V(D)|=\frac{2}{q+1+r}|V(D)|\ge \lceil \ell/m\rceil+\lfloor |B|/m\rfloor.\label{eq:lastQ}\end{equation}
		Since we assume $\frac{2}{q+1+r}|V(D)|\ge \lceil \ell/m\rceil$, there is nothing to prove if $|B|<m$, so we can assume $|B|\ge m$.   We can also assume $m\ge 3$, as otherwise $q\ge r^2-1$ does not hold for $r\ge 3$ (since $q=-1-2r+2rm$).
		
		By definition of $B$ we have 
		\[|V(D)|\ge r(\ell-|B|)+(rm-r+1)|B|=r \ell+(1-2r+rm)|B|.\]
		Multiplying both sides by $\frac{2}{rm-r}$ and using some crude estimates gives
		\[\frac{2}{rm-r} |V(D)|\ge  \frac{2}{m}\ell+\frac{2(m-2)}{m-1}|B|=\frac{2}{m}\ell+ \frac{2m-5}{m-1}|B|+\frac{1}{m-1}|B|\ge \frac{2}{m}\ell +1+\lfloor |B|/m\rfloor,\]
		where this last step used $|B|\ge m-1$ and that $2m-5\ge 1$ since $m\ge 3$.  Observe that $ \frac{2}{m}\ell +1\ge \lceil \ell/m\rceil$ (this is trivial if $\ell\le m$, otherwise $\lceil \ell/m\rceil\le 2\ell/m$).  We thus conclude that \eqref{eq:lastQ} holds, proving the result.

	\end{proof}

\end{document}